\newif\ifcomment
 \setlist[itemize]{topsep=0pt,partopsep=0pt,itemsep=0pt,parsep=0pt}
 \setlist[itemize,1]{label=-}
 \setlist[itemize,2]{label=---}
 \setlist[itemize,3]{label=*}
 \setlist[enumerate]{topsep=0pt,partopsep=0pt,itemsep=0pt,parsep=0pt}
 \setlist[enumerate,1]{label=\roman*)}
 \setlist[enumerate,2]{label=\alph*)}
 \setlist[enumerate,3]{label=\arabic*)}
\title{A Note on Directed Treewidth}
\date{}
\DeclareRobustCommand{\authorthing}{
\begin{center}
\begin{tabular}{c}
Sebastian Wiederrecht\\
\emph{sebastian.wiederrecht@tu-berlin.de}\\
\noalign{\smallskip}
{Technische Universität Berlin}
\end{tabular}
\end{center}}
\author{\authorthing}
\begin{document}
\maketitle

\begin{abstract}
	We characterise digraphs of directed treewidth one in terms of forbidden butterfly minors.
	Moreover, we show that there is a linear relation between the hypertree-width of the dual of the cycle hypergraph of $D$, i.\@ e.\@ the hypergraph with vertices $\Fkt{V}{D}$ where every hyperedge corresponds to a directed cycle in $D$, and the directed treewidth of $D$.
	Based on this we show that a digraph has directed treewidth one if and only if its cycle hypergraph is a hypertree.
	
	\noindent \textbf{Keywords:} directed treewidth, butterfly minor, hypertree-width
\end{abstract}

\section{Introduction}

The notion of \emph{treewidth}, introduced in its popular form by Robertson and Seymour \cite{graphminorproject}, can be seen as an invariant for (undirected) graphs that measures how far away from a tree a given graph $G$ is.
Treewidth is closed under minors and thus classes of bounded treewidth can be described by finite sets of so called \emph{forbidden minors}.
It is well known that the graphs of treewidth at most one are exactly those excluding $C_3$ as a minor.
So a connected graph has treewidth one if and only if it is a tree.
Similarly other classes of small treewidth can be characterised by (finite) families of forbidden minors.

In the more general setting of directed graphs, it is not immediately clear hot to generalise the notion of treewidth and several attempts haven been made.
From a structural point of view the most popular generalisation of treewidth to digraphs is the \emph{directed treewidth} by Johnson et al.\@ \cite{johnson2001directed}.
Sadly, directed treewidth is not (tightly) closed under reasonable versions of minors for directed graphs \cite{adler2007directed} and so one probably cannot expect forbidden minor characterisations for graphs of directed treewidth at most $k$.
With some exceptions at least.
The examples showing that directed treewidth might increase after contracting certain edges already require digraphs of directed treewidth $3$ and no example with smaller directed treewidth seems to be known.
Hence especially for the case $k=1$ one could still hope for a characterisation.
Please note that the commonly used notion of \emph{butterfly minors} does not yield a well quasi ordering for all digraphs.
Thus minimal families of forbidden minors might be infinite.

The DAGs, \emph{directed acyclic graphs}, all have directed treewidth $0$.
DAGs can be arbitrary dense and since they do not have any directed cycles they behave like stable sets in the undirected case with regards to (directed) treewidth.
This becomes apparent if one considers the \emph{cops and robber} game for directed treewidth.
Here the robber is only allowed to move within strong components and thus cannot move at all in a DAG.
So if one were to seek a directed analogue of undirected trees the digraphs of directed treewidth one could be one way to find one.

\paragraph{Contribution}

We give two answers to the question of digraphs of directed treewidth one.
That is, we characterise the directed treewidth one digraphs by a minimal, although infinite, family of forbidden butterfly minors and we also characterise these digraphs by the structure of their \emph{cycle hypergraphs}.
The cycle hypergraph $\CycleHypergraph{D}$ of a digraph $D$ is the hypergraph obtained from $D$ by taking $\Fkt{V}{D}$ as the vertex set and the vertex sets of all directed cycles in $D$ as the set of hyperedges.
We show that a digraph $D$ has directed treewidth one if and only if $\CycleHypergraph{D}$ is a hypertree.
Previously, besides some small classes of digraphs \cite{gurski2019forbidden}, almost nothing was known on digraphs of directed treewidth one.
As another novelty we adapt techniques and tools from structural matching theory directly for the setting of digraphs, without needing to invoke the matching theoretic setting itself.

The second characterisation of directed-treewidth-one digraphs is part of a bigger picture.
We show that the \emph{hypertree-width} of the \emph{dual} cycle hypergraph $\Dual{\CycleHypergraph{D}}$ is closely linked to the directed treewidth of $D$.
We believe that this makes a strong point for the directed treewidth as a structural digraph width measure.

\section{Preliminaries}

Throughout this paper we will stick to the following convention, by $G$ we denote \emph{undirected graphs}, by $D$ we denote \emph{directed graphs} and by $H$ we denote \emph{hypergraphs} with the exception of the cycle hypergraph.

We only consider simple graphs, so we do not allow loops or multiple edges.
If $G$ is a graph and $u,v\in\Fkt{V}{G}$ we denote the edge $\Set{u,v}$ by $uv$ or $vu$ which can be understood as the same edge.
A path in $G$ is denoted by only giving its vertices, so a \emph{path} of length $\ell$ is a sequence of pairwise distinct vertices $P=\Brace{v_0,v_1,\dots,v_{\ell}}$ such that $v_iv_{i+1}\in\Fkt{E}{G}$ for all $i\in\Set{0,\dots,\ell-1}$.
We say that $v_0$ and $v_{\ell}$ are the \emph{endpoints} of $P$ and by a slight abuse of notation we also identify the graph $\Brace{\Set{v_0,\dots,v_{\ell}},\Set{v_0v_1,\dots,v_{\ell-1}v_{\ell}}}$ with $P$.
For a more in-depth introduction and more definitions we refer to \cite{DBLP:books/daglib/0030488}.

\begin{definition}
Let $G$ be a graph, a \emph{tree decomposition} for $G$ is a tuple $\Brace{T,\beta}$ where $T$ is a tree and $\beta\colon\Fkt{V}{T}\rightarrow 2^{V(G)}$ is a function, called the \emph{bags} of the decomposition, satisfying the following properties:
\begin{enumerate}
	\item $\bigcup_{t\in V(T)}\Fkt{\beta}{t}=\Fkt{V}{G}$,
	
	\item for every $e\in\Fkt{E}{G}$ there exists some $t\in\Fkt{V}{T}$ such that $e\subseteq\Fkt{\beta}{t}$, and
	
	\item for every $v\in\Fkt{V}{G}$ the set $\CondSet{t\in\Fkt{V}{T}}{v\in\Fkt{\beta}{t}}$ induces a subtree of $T$.
\end{enumerate}	
The \emph{width} of $\Brace{T,\beta}$ is defined as
\begin{align*}
\Width{\Brace{T,\beta}}\coloneqq \max_{t\in V(T)}\Abs{\Fkt{\beta}{t}}-1.
\end{align*}
The \emph{\treewidth} of $G$, denoted by $\tw{G}$, is defined as the minimum width over all tree decompositions for $G$.
\end{definition}

The upcoming two subsections will introduce and give short overviews on possible generalisations of \treewidth appropriate for the settings of digraphs an hypergraphs.

\subsection{Digraphs and Directed Treewidth}

A \emph{directed graph}, or \emph{digraph for short}, is a tuple $D=\Brace{V,E}$ where $V$, or $\Fkt{V}{D}$, is the \emph{vertex set}of $D$ while $\Fkt{E}{D}=E\subseteq V\times V$ is the \emph{edge set}, sometimes called \emph{arc set}, of $D$.
All digraphs will be \emph{simple}, so we do not allow multiple edges or loops, although we allow 
\emph{digons}, i.\@ e.\@ $D$ may contain both edges $\Brace{u,v}$ and $\Brace{v,u}$ at the same time.
For an edge $e=\Brace{u,v}\in\Fkt{E}{D}$ we call $u$ the \emph{tail} of $e$ while $v$ is its \emph{head}.

Given a vertex $v\in\Fkt{V}{D}$ we say that an edge $e$ is \emph{out-going} from $v$ if $v$ is the tail of $e$.
In case $v$ is the head of $e$ we say that $e$ is \emph{incoming} to $v$.
In either way $e$ is \emph{incident} with $v$, we write $v\sim e$.
The \emph{out-neighbourhood} of $v$, denoted by $\OutN{D}{v}$, is the set of vertices $u$ for which the edge $\Brace{v,u}$ exists.
Similarly the \emph{in-neighbourhood} of $v$, denoted by $\InN{D}{v}$, is the set of vertices $u$ for which the edge $\Brace{u,v}$ exists.
We adapt our notation for paths in undirected graphs to digraphs, so a \emph{directed path} of \emph{length} $\ell$ is a sequence $P=\Brace{v_0,v_1,\dots,v_{\ell}}$ of pairwise distinct vertices such that $\Brace{v_,v_{i+1}}\in\Fkt{E}{D}$ for all $i\in\Set{0,\dots,\ell-1}$.
We say that $P$ \emph{goes from $v_0$ to $v_{\ell}$}.
Sometimes need a relaxed version of directed paths is needed, namely \emph{directed walks}, in which the vertices do not have to be pairwise distinct.
Again we identify the sequence of vertices and the actual directed graph representing the directed path.
A \emph{directed cycle} of \emph{length} $\ell$ is a sequence $C=\Brace{v_1,\dots,v_{\ell},v_1}$ where the $v_i$, $v_j$ are distinct for $i,j\in\Set{1,\dots,\ell}$, $i\neq j$, and $\Brace{v_i,v_{i+1}}\in\Fkt{E}{D}$ for all $i\in\Set{1,\dots,\ell-1}$ as well as $\Brace{v_{\ell},v_1}\in\Fkt{E}{D}$.

For two vertices $u,v\in\Fkt{V}{D}$ we write $\Reaches{u}{D}{v}$ if there exists a directed path that goes from $u$ to $v$ in $D$, note that $\Reaches{u}{D}{u}$ holds trivially for all $u$.
A digraph $D$ is called \emph{strongly connected} if $\Reaches{u}{D}{v}$ and $\Reaches{v}{D}{u}$ for all pairs of vertices $u,v\in\Fkt{V}{D}$.
We denote by $\Below{D}{v}$ the set of all vertices $u$ with $\Reaches{v}{D}{u}$.
A maximal strongly connected subgraph of $D$ is a \emph{strong component} of $D$.

Let $G$ be an undirected graph, the \emph{bidirection} of $G$ is the following digraph
\begin{align*}
\Bidirected{G}\coloneqq\Brace{\Fkt{V}{G},\CondSet{\Brace{u,v},\Brace{v,u}}{uv\in\Fkt{E}{G}}}.
\end{align*}
A \emph{bicycle} is the bidirection of an undirected cycle $C$ and its \emph{length} is the length of $C$.
An \emph{orientation} of $G$ is any digraph that can be obtained from $G$ by replacing every edge $uv\in\Fkt{E}{G}$ by exactly one of the to possible directed edges $\Brace{u,v}$ and $\Brace{v,u}$.

An \emph{arborescence} is an orientation $T$ of an undirected tree such that there exists a unique vertex $r\in\Fkt{V}{T}$ with $\Below{T}{r}=\Fkt{V}{T}$.
We call $r$ the \emph{root} of $T$.
If $T$ is an arborescence and $t\in\Fkt{V}{T}$ we denote by $T_t$ the subgraph of $T$ induced by $\Below{T}{t}$ called the \emph{subtree of $T$ rooted at $t$}.

\begin{definition}
Let $D$ be a digraph, a \emph{directed tree decomposition} for $D$ is a tuple $\Brace{T,\beta,\gamma}$ where $T$ is an arborescence, $\beta\colon\Fkt{V}{T}\rightarrow 2^{V(D)}$ is a function that partitions $\Fkt{V}{D}$, called the \emph{bags}, and $\gamma\colon\Fkt{E}{T}\rightarrow 2^{V(D)}$ is a function, called the \emph{guards}, satisfying the following requirement:
\begin{enumerate}
	\item[] For every $\Brace{d,t}\in\Fkt{E}{T}$, $\Fkt{\gamma}{\Brace{d,t}}$ is a hitting set for all directed walks in $D$ that start and end in $\Fkt{\beta}{T_t}\coloneqq\bigcup_{t'\in V(T_t)}\Fkt{\beta}{t'}$ and contain a vertex that does not belong to $\Fkt{\beta}{T_t}$.
\end{enumerate}
For every $t\in\Fkt{V}{T}$ let $\Fkt{\Gamma}{t}\coloneqq\Fkt{\beta}{t}\cup\bigcup_{t\sim e}\Fkt{\gamma}{e}$.
The \emph{width} of $\Brace{T,\beta,\gamma}$ is defined as
\begin{align*}
\Width{\Brace{T,\beta,\gamma}}\coloneqq\max_{t\in V(T)}\Abs{\Fkt{\Gamma}{t}}-1.
\end{align*}
The \emph{\dtwText} of $D$, denoted by $\dtw{D}$, is the minimum width over all directed tree decompositions for $D$.
\end{definition}

A \emph{complexity measure} for a class of combinatorial object $\mathcal{O}$ is a function $\nu$ that assigns every $O\in\mathcal{O}$ a non-negative integer.
We say that two complexity measures $\nu$ and $\mu$ for $\mathcal{O}$ are \emph{equivalent} if there exist computable and monotone functions $f$ and $g$ such that for all $O\in\mathcal{O}$ we have
\begin{align*}
\Fkt{f}{\Fkt{\mu}{O}}\leq\Fkt{\nu}{O}\leq\Fkt{g}{\Fkt{\mu}{O}}.
\end{align*}

One of the arguably most popular characterisations of undirected \treewidth is in terms of a so called \emph{cops and robber} game.
A similar game can be defined for \dtwText and its equivalence to \dtwText was among the very first results regarding this width measure \cite{johnson2001directed}.
Tightly bound to the directed cops and robber game is the notion of havens, which gives a more structural description of winning strategies of the robber player.

The cops and robber game for directed treewidth on a digraph $D$ is played as follows.
In the beginning the cops choose a position $X_0\subseteq\Fkt{V}{D}$ and the robber chooses a strong component $R_0$ of $D-X_0$, this defines the initial position $\Brace{X_0,R_0}$.
If $\Brace{X_i,R_i}$ is a position in the game, the cops may announce a new position $X_{i+1}\subseteq\Fkt{V}{D}$ and the robber chooses a new strong component $R_{i+1}$ of $D-x_{i+1}$ such that $X_i$ and $X_{i+1}$ belong to the same strong component of $D-\Brace{X_i\cap X_{i+1}}$.
If the robber cannot choose a non-empty component, she looses the game.
Otherwise, if the cops cannot catch the robber in a finite number of turns, the robber wins.
The \emph{directed cop number}, denoted by $\dcn{D}$, is the smallest number $k$ such that the cops win and $\Abs{X_i}\leq k$ for all $i$.
For a formal definition the reader is referred to \cite{DBLP:books/sp/18/KreutzerK18}.

\begin{definition}
Let $D$ be a digraph.
A \emph{haven} of order $k$ is a function $h\colon\Choose{\Fkt{V}{D}}{<k}\rightarrow 2^{V(G)}$ assigning to every set $X$ of fewer than $k$ vertices a strong component of $D-X$ such that if $Y\subseteq X\in\Choose{\Fkt{V}{D}}{<k}$, then $\Fkt{h}{X}\subseteq\Fkt{h}{Y}$.
\end{definition}

\begin{lemma}[\cite{johnson2001directed}]\label{lemma:dcopsandkhavens}
Let $D$ be a digraph.
If $D$ contains a haven of order $k$, then the robber has a winning strategy against $k-1$ cops in the directed cops and robber game and $\dcn{D}-1\leq\dtw{D}$.
\end{lemma}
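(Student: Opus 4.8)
The statement has two parts: (i) if $D$ has a haven of order $k$, then the robber beats $k-1$ cops; and (ii) $\dcn{D}-1\leq\dtw{D}$. The first part is a direct strategy-stealing argument from the haven, and the second part is a standard reduction showing that a directed tree decomposition of small width yields a small haven (equivalently, a cop strategy), so that the contrapositive of (i) bounds the cop number. I would prove (i) directly and then derive (ii) by constructing a haven of order $\dtw{D}+2$ from... wait — more carefully, (ii) follows once we know that $\dtw{D}\geq k-1$ whenever a haven of order $k$ exists; combined with (i) this is really the statement ``haven of order $k$ $\Rightarrow$ cop number $\geq k$ and $\dtw\geq k-1$''. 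So the two displayed consequences are really the same structural fact read two ways.

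\textbf{Part (i): the haven gives a robber strategy.} Let $h$ be a haven of order $k$. The robber maintains the invariant that after the cops have announced $X_i$ (with $|X_i|\leq k-1$, so $h$ is defined on $X_i$), she sits in the strong component $R_i := h(X_i)$. Initially she picks $R_0 = h(X_0)$, which is a nonempty strong component of $D-X_0$. For the inductive step, suppose the position is $(X_i, R_i)$ with $R_i = h(X_i)$ and the cops announce $X_{i+1}$. Set $Z := X_i\cap X_{i+1}$. Since $Z\subseteq X_i$ and $Z\subseteq X_{i+1}$, the monotonicity of $h$ gives $h(X_i)\subseteq h(Z)$ and $h(X_{i+1})\subseteq h(Z)$. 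Thus $R_i = h(X_i)$ and $R_{i+1} := h(X_{i+1})$ both lie inside the single strong component $h(Z)$ of $D-Z$. This is exactly the legality condition the robber must respect: $X_i$-side and $X_{i+1}$-side robber positions must be in the same strong component of $D-Z$ — here both $R_i$ and $R_{i+1}$ are subsets of $h(Z)$, so the robber can legally escape along a directed walk inside $h(Z)$ (which avoids $Z$) from $R_i$ to $R_{i+1}$. Since $h(X_{i+1})$ is always a nonempty strong component, the robber is never caught, so she wins against $k-1$ cops.

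\textbf{Part (ii): $\dcn{D}-1\leq\dtw{D}$.} I would argue the contrapositive-style bound: if $\dtw{D}=w$, then the cops win with $w+1$ cops, i.e. $\dcn{D}\leq w+1$. Take an optimal directed tree decomposition $(T,\beta,\gamma)$ of width $w$; the cop strategy is the usual ``descend the arborescence'' strategy. The cops start by occupying $\Gamma(r)$ for the root $r$; whenever the robber occupies a strong component $R$ contained in $\beta(T_t)\setminus\Gamma(t)$ for the current node $t$, there is a unique child $t'$ of $t$ with $R\subseteq\beta(T_{t'})$ — this uses that $\beta$ partitions $V(D)$ and the guard property of the edge $(t,t')$ confines $R$ to a single branch. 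The cops move to $\Gamma(t')$; because $\gamma((t,t'))\subseteq\Gamma(t)\cap\Gamma(t')$ lies in the intersection, the move is legal (the robber cannot sneak past, as any walk from $\beta(T_{t'})$ leaving $\beta(T_{t'})$ hits $\gamma((t,t'))$). Since $T$ is finite and the cops strictly descend, the robber is cornered in a leaf, so $\dcn{D}\leq w+1 = \dtw{D}+1$, i.e. $\dcn{D}-1\leq\dtw{D}$.

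\textbf{Main obstacle.} The delicate point is the legality bookkeeping in both parts: in (i), verifying precisely that the haven's monotonicity matches the game's ``same strong component of $D-(X_i\cap X_{i+1})$'' rule (one must note $h(X_i\cap X_{i+1})$ is connected and contains both consecutive robber spaces, hence the required escape walk exists and avoids $X_i\cap X_{i+1}$). In (ii), the subtle step is showing the robber's component, once it lies below $t$ and misses $\Gamma(t)$, is captured entirely within one child's subtree $\beta(T_{t'})$ — this is where the guard/hitting-set property is essential, since a strong component cannot be split across two subtrees whose connecting walks are all hit by a guard already vacated or about to be occupied. Everything else is routine induction on $|X_i|$ respectively on the depth in $T$.
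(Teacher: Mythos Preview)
The paper does not prove this lemma at all; it is quoted from \cite{johnson2001directed} without argument, so there is nothing to compare your approach against. Your proposal is the standard argument from that reference and is correct: part (i) is exactly the intended use of haven monotonicity (both $h(X_i)$ and $h(X_{i+1})$ sit inside the single strong component $h(X_i\cap X_{i+1})$ of $D-(X_i\cap X_{i+1})$, which is precisely the game's legality condition), and part (ii) is the usual ``chase down the arborescence'' strategy with the cops on $\Gamma(t)$.

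One point you flag but do not fully justify deserves a line more of care: when the cops move from $\Gamma(t)$ to $\Gamma(t')$, the robber may in principle move to a \emph{new} strong component $R'$ of $D-\Gamma(t')$, and you must check $R'\subseteq\beta(T_{t'})$ (not merely that the old $R$ was). This follows because $\gamma((t,t'))\subseteq\Gamma(t)\cap\Gamma(t')$ stays occupied throughout the move, and any strong component of $D-\gamma((t,t'))$ meeting $\beta(T_{t'})$ is entirely contained in $\beta(T_{t'})$ by the guard property; since $R$ and $R'$ lie in the same such component, $R'\subseteq\beta(T_{t'})$. With that made explicit the termination at a leaf $\ell$ is clean: $R'\subseteq\beta(T_\ell)=\beta(\ell)\subseteq\Gamma(\ell)$ forces $R'=\emptyset$.
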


In particular this means that, if $D$ contains a haven of order $k+1$, then the \dtwText of $D$ is at least $k$.
The notion of havens can be used to deduce that the directed cop number and \dtwText are in fact equivalent complexity measures for digraphs.

\begin{theorem}[\cite{johnson2001directed}]\label{thm:dhavensvsdtw}
Let $D$ be a digraph, then either $D$ contains a haven of order $k$, or has \dtwText at most $3k-2$.
\end{theorem}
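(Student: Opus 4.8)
The plan is to prove the equivalent statement that if $D$ has no haven of order $k$, then $\dtw{D}\le 3k-2$, by building a directed tree decomposition top-down. Say that $Z\subseteq\Fkt{V}{D}$ \emph{guards} a set $B\subseteq\Fkt{V}{D}\setminus Z$ if every directed walk of $D$ with both endpoints in $B$ that visits a vertex outside $B$ meets $Z$; this is exactly the requirement the guard of the edge entering the node responsible for $B$ must satisfy. The whole construction is packaged into a recursive lemma: \emph{if $D$ has no haven of order $k$, then for every $Z$ with $\Abs{Z}\le 2k-1$ and every $B$ guarded by $Z$ there is a directed tree decomposition whose bags partition $B$, in which every $\Gamma$-set has size at most $3k-1$, and whose root bag $A$ satisfies $\Abs{A}\le k$ with every guard on an edge incident with the root contained in $Z\cup A$.} Applying this lemma with $Z=\emptyset$ and $B=\Fkt{V}{D}$ — which $\emptyset$ guards vacuously — then yields a directed tree decomposition of $D$ of width at most $3k-2$, which is the theorem.

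For the lemma I would induct on $\Abs{B}$. If $D[B]$ is acyclic, lay the vertices of $B$ out one per bag along a directed-path arborescence respecting a topological order of $D[B]$: each bag-set $\Fkt{\beta}{T_t}$ is then a terminal segment that no directed walk of $D[B]$ can leave and re-enter, so $\emptyset$ guards it, every $\Gamma$-set is a singleton, and the top guard $Z$ guards $B=\Fkt{\beta}{T_r}$ by hypothesis. If $D[B]$ contains a directed cycle, I would use the absence of a haven of order $k$ to obtain a non-empty set $A\subseteq B$ with $\Abs{A}\le k-1$ such that every strong component of $D-\Brace{Z\cup A}$ contained in $B$ is guarded by at most $2k-1$ of the vertices of $Z\cup A$ — intuitively, otherwise the robber confined to $B$ could always dodge fewer than $k$ cops and these evasions would organise into a haven of order $k$, a contradiction. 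Writing $B_1,\dots,B_m$ for these strong components with such guards $Z_1,\dots,Z_m\subseteq Z\cup A$ (so $\Abs{B_i}<\Abs{B}$ since $A\neq\emptyset$, and $\Abs{Z_i}\le 2k-1$), apply the inductive hypothesis to each $\Brace{Z_i,B_i}$, take the disjoint union of the resulting arborescences, add a new root $r$ with $\Fkt{\beta}{r}\coloneqq A$, and join $r$ to the root of the $i$-th piece by an edge with guard $Z_i$. The three axioms of a directed tree decomposition are then routine to check: the bag sets partition $B$; each $Z_i$ hits precisely the walks it must, since $B_i$ is a strong component of $D-\Brace{Z\cup A}$ and $Z$ still guards $\Fkt{\beta}{T_r}=B$; and $\Abs{\Fkt{\Gamma}{r}}\le\Abs{A\cup Z}\le 3k-1$, while every other $\Gamma$-set obeys the bound by the inductive hypothesis.

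The hard part is the quantitative clause $\Abs{Z_i}\le 2k-1$ that is needed to feed the pieces back into the recursion: the naive estimate only gives $\Abs{Z_i}\le\Abs{Z\cup A}\le(2k-1)+(k-1)=3k-2$, and in fact a single strong component can ``see'' all of $Z\cup A$ (through digons, say), so no one-shot separator will do, and this gap is exactly where the factor $3$ in $3k-2$ is born. I would attack it with a finer invariant in which $Z$ splits as an ``old boundary'' of size at most $k-1$ together with a ``most recent cop move'' of size at most $k$, and by advancing the cops one step at a time: each single step reads off, from the failure of the haven condition, a hitting set of size at most $k-1$ for the directed bramble of the strongly connected subsets of $B$ not yet cornered, and this controlled, incremental advancement is what keeps each new boundary within $2k-1$ vertices. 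A second route to the same obstacle is via the directed cops-and-robber game: by positional determinacy the absence of a haven of order $k$ hands $k-1$ cops a possibly non-monotone winning strategy, which converts into a directed tree decomposition of width $3(k-1)+1=3k-2$, the factor $3$ being the price of monotonising a search strategy. Either way, I would carve out the incremental step as a standalone lemma, prove it by induction on $\Abs{\Fkt{V}{D}}-\Abs{Z}$, and keep all guard-set bookkeeping explicit, since that is where an off-by-a-constant slip would hide.
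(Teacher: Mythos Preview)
The paper does not prove this theorem at all: it is quoted verbatim from \cite{johnson2001directed} and used as a black box, so there is no ``paper's own proof'' to compare your proposal against.

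As for the proposal itself, the architecture you describe is exactly the original Johnson--Robertson--Seymour--Thomas argument, and you have correctly located the one genuine difficulty: keeping the new guard sets $Z_i$ at size at most $2k-1$ rather than the naive $3k-2$. But your write-up is explicitly a plan, not a proof, at precisely that point. You say you ``would attack it with a finer invariant'' splitting $Z$ into an old boundary and a recent move, or alternatively monotonise a cops-and-robber strategy, and that you ``would carve out the incremental step as a standalone lemma'' --- but none of this is actually carried out. In the original paper the step is handled not by an incremental cop advance but by a careful minimality choice: one picks $A$ with $\Abs{A}\le k-1$ so that the strong component of $D-(Z\cup A)$ containing a fixed ``big'' piece is inclusion-minimal, and then shows that each resulting $B_i$ is guarded by a set of size at most $2k-1$ obtained by dropping the unused part of $Z$. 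Your intuition that ``otherwise the evasions organise into a haven'' is correct in spirit, but the passage from that intuition to an honest bound on $\Abs{Z_i}$ is the entire content of the theorem, and it is absent here. So the proposal is a faithful outline of the right proof, with the load-bearing lemma left as an IOU.
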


So if $k$ cops have a winning strategy in the directed cops and robber game, $D$ cannot contain a haven of order $k+1$ implying that the \dtwText of $D$ is at most $3k+1$.

Another well known obstruction to \treewidth is the notion of highly linked sets.
Roughly speaking, a highly linked set cannot be split in a balanced way by deleting only a small number of vertices.
A similar idea can be applied to digraphs.

\begin{definition}
Let $D$ be a digraph.
A set $W\subseteq\Fkt{V}{D}$ is called \emph{$k$-linked} if for every set $S\subseteq\Fkt{V}{D}$ with $\Abs{S}\leq k$ there exists a strong component $K$ of $D-S$ such that $\Abs{\Fkt{V}{K}\cap W}>\frac{\Abs{W}}{2}$.	
\end{definition}

For $k$-linked sets one can obtain a duality theorem for \dtwText strikingly similar to the one for havens.
While the theorem should be contributed to Johnson et al.\@, details of the proof for the following theorem can be found in \cite{reed1999introducing} as well.

\begin{theorem}[\cite{johnson2001directed}]\label{thm:dklinkedvsdtw}
Let $D$ be a digraph.
If $\dtw{D}\leq k$, $D$ does not contain a $\Brace{k+1}$-linked set.
Moreover, $D$ either contains a $k$-linked set or has \dtwText at most $3k-2$.
\end{theorem}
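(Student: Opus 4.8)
The plan is to establish the two assertions separately. The first is a short consequence of the definition of a directed tree decomposition, whereas the second is the substantial direction and I would obtain it by the recursive, balanced‑separator construction that also underlies \Cref{thm:dhavensvsdtw}.

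For the first assertion, fix a directed tree decomposition $\Brace{T,\beta,\gamma}$ of $D$ of width $\dtw{D}\le k$ and root $T$ at some node $r$, so $\Abs{\Fkt{\Gamma}{t}}\le k+1$ for all $t$. Given a nonempty $W\subseteq\Fkt{V}{D}$, I would choose a node $t$ with $\Abs{W\cap\Fkt{\beta}{T_t}}>\Abs{W}/2$ such that no child of $t$ shares this property; such a $t$ exists because $\Fkt{\beta}{T_r}=\Fkt{V}{D}\supseteq W$. Then $S\coloneqq\Fkt{\Gamma}{t}$ has at most $k+1$ vertices, and the claim is that every strong component $K$ of $D-S$ meets $W$ in at most $\Abs{W}/2$ vertices, which shows $W$ is not $\Brace{k+1}$‑linked. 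The crux is that for each child $t'$ of $t$, and (via the edge above $t$, the case $t=r$ being trivial) for $T_t$ itself, $\Fkt{V}{K}$ is either contained in $\Fkt{\beta}{T_{t'}}$ or disjoint from it: otherwise a closed directed walk inside $K$ through a vertex of $\Fkt{\beta}{T_{t'}}$ and a vertex outside it starts and ends in $\Fkt{\beta}{T_{t'}}$ and leaves it, so it meets $\Fkt{\gamma}{\Brace{t,t'}}\subseteq S$, contradicting $\Fkt{V}{K}\cap S=\emptyset$. Since $\Fkt{\beta}{t}\subseteq S$ and the sets $\Fkt{\beta}{T_{t'}}$ over the children partition $\Fkt{\beta}{T_t}\setminus\Fkt{\beta}{t}$, each such $K$ is either disjoint from $\Fkt{\beta}{T_t}$ — and then meets $W$ in fewer than $\Abs{W}-\Abs{W\cap\Fkt{\beta}{T_t}}<\Abs{W}/2$ vertices — or lies inside a single $\Fkt{\beta}{T_{t'}}$ — and then meets $W$ in at most $\Abs{W}/2$ vertices by the choice of $t$. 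As $\emptyset$ is trivially not $\Brace{k+1}$‑linked, it follows that $D$ has no $\Brace{k+1}$‑linked set.

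For the second assertion, suppose $D$ has no $k$‑linked set; I would construct the arborescence top‑down, each node carrying a \emph{piece} $\Brace{R,B}$ with $R\subseteq\Fkt{V}{D}$, $B\subseteq\Fkt{V}{D}\setminus R$, $\Abs{B}\le 2k-1$, and the invariant that $B$ hits every directed walk that starts and ends in $R$ and leaves $R$; the root carries $\Brace{\Fkt{V}{D},\emptyset}$. A piece with $\Abs{R\cup B}\le 3k-1$ becomes a leaf with bag $R$ and incoming guard $B$. For a larger piece, a suitable set $W$ with $B\subseteq W$ and $\Abs{W}\le 2k-1$ is, in particular, not $k$‑linked, so the hypothesis provides a set $S$ with $\Abs{S}\le k$ such that every strong component of $D-S$ meets $W$ in at most $k-1$ vertices; the node is then given bag $S\cap R$, and its children are the strong components $K$ of $D[R\setminus S]$, each carrying region $\Fkt{V}{K}$ and a boundary $B_K\subseteq S\cup B$ consisting of the vertices of $S\cup B$ that are \emph{relevant} to $K$, i.e.\@ that lie on a closed directed walk through $\Fkt{V}{K}$ that otherwise avoids $S\cup B$. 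Granting that the invariant is preserved, that $\Abs{B_K}\le 2k-1$, and that the regions strictly shrink, the width bound is essentially formal: at an internal node $\Fkt{\Gamma}{t}\subseteq S\cup B$ has at most $k+\Brace{2k-1}=3k-1$ vertices, and at a leaf $\Fkt{\Gamma}{t}=R\cup B$ has at most $3k-1$ vertices, so the width is at most $3k-2$.

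The step I expect to be the main obstacle is making this recursion actually go through with $\Abs{B_K}\le 2k-1$ while preserving the guard property. For the guard property, a directed walk starting and ending in $\Fkt{V}{K}$ and leaving it must — by a strong‑connectivity argument inside $D[R\setminus S]$ — either meet $S$ or leave $R$, and in the latter case meet $B$ by the inherited invariant, so it meets $S\cup B$; but turning ``it meets $S\cup B$'' into ``it meets $B_K$'' is delicate, because such a walk may enter and leave $\Fkt{V}{K}$ through two distinct vertices of $S\cup B$, so the notion of relevance (and possibly the grouping of the pieces of $R\setminus S$, rather than taking each strong component on its own) must be chosen with care. For the size bound, a relevant vertex of $B$ lies in the same strong component of $D-S$ as $K$ and is therefore counted by the balance property of $S$, giving at most $k-1$ such vertices, plus at most $k$ from $S$; but this forces $S$ to balance the inherited boundary \emph{and} to cut $R$ so that the regions shrink, and producing a single separator of size at most $k$ that does both from the non‑$k$‑linkedness hypothesis is exactly the technical heart of the argument. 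This is carried out in \cite{johnson2001directed} (with further details in \cite{reed1999introducing}), and I would reuse that construction, tracking a $k$‑linked set where it tracks a haven — consistently, since a $k$‑linked set $W$ induces a haven of order $k$ by mapping each $X\in\Choose{\Fkt{V}{D}}{<k}$ to the unique strong component of $D-X$ containing more than $\Abs{W}/2$ vertices of $W$, so the $k$‑linked conclusion is in fact a strengthening.
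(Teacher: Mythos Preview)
The paper does not give its own proof of this theorem: it is quoted from \cite{johnson2001directed}, with the remark that details can also be found in \cite{reed1999introducing}. Your sketch is precisely the standard argument from those references --- the balanced-node choice in a given decomposition for the first implication, and the top-down recursive construction with boundary sets of size at most $2k-1$ for the second --- and you correctly identify where the technical work lies and defer to the same sources the paper cites. There is nothing to compare beyond this; your proposal is aligned with what the paper invokes.
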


For the sake of completeness we have to mention the Directed Grid Theorem, which, although not as precise as the duality theorems we have seen so far, still might be one of the deepest and most important results regarding \dtwText.
An edge $\Brace{u,v}$ is \emph{contractible} if it is the only out-going edge of $u$, or the only incoming edge at $v$.
The \emph{butterfly contraction} of $\Brace{u,v}$ is the operation of identifying $u$ and $v$ into a single vertex and deleting all resulting parallel edges and all loops.
A digraph $D'$ is a \emph{butterfly minor} of $D$ if it can be obtained from a subgraph of $D$ by a series of butterfly contractions.

Although \dtwText was shown to not be monotone under butterfly minors in \cite{adler2007directed}, it was shown in \cite{hatzel2019cyclewidth} that there still is a function bounding the \dtwText of a butterfly minor $D'$ of $D$ in $\dtw{D}$, a result that is also implied by the Directed Grid Theorem itself.
A \emph{cylindrical grid} of order $k$ consists of $k$ concentric directed cycles of length $2k$ and $2k$ vertex disjoint paths connecting the cycles in alternating directions.

\begin{theorem}[\cite{kawarabayashi2015directed}]\label{thm:dirgridthm}
There exists a function $f$ such that every digraph $D$ either has \dtwText at most $\Fkt{f}{k}$, or contains the cylindrical grid of order $k$ as a butterfly minor.
\end{theorem}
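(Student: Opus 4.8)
The plan is to follow the blueprint of the undirected Excluded Grid Theorem of Robertson and Seymour, replacing each undirected routing step by its considerably weaker directed counterpart. The starting point is \cref{thm:dklinkedvsdtw}: if $\dtw{D} > 3m - 2$ for a value $m$ that is a suitably large polynomial in $k$, then $D$ contains an $m$-linked set $W$ (alternatively one may start from a large haven via \cref{thm:dhavensvsdtw,lemma:dcopsandkhavens}); since the robber only ever moves inside strong components, such an obstruction genuinely certifies a rich cyclic structure and not merely a dense acyclic one. The first step is to refine $W$ into a large \emph{well-linked} set $A$, meaning a vertex set such that for any two disjoint subsets $X, Y \subseteq A$ with $\Abs{X} = \Abs{Y}$ there are $\Abs{X}$ pairwise vertex-disjoint directed paths from $X$ to $Y$, and likewise from $Y$ to $X$. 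This follows from the usual ``sacrifice'' argument: as long as well-linkedness fails, a small separator splits the candidate set and one retains the larger side; $m$-linkedness of $W$ ensures the process stops with a set that is still large.

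The core of the argument is to extract from $A$ a cylindrical grid of order $k$ as a butterfly minor, which I would do in two stages. First, build a long ``path-like'' scaffold: a sequence $B_1, \dots, B_\ell$ of pairwise disjoint, moderately sized well-linked sets together with vertex-disjoint linkages running forward from $B_i$ to $B_{i+1}$ and backward from $B_{i+1}$ to $B_i$, all internally disjoint from every $B_j$; such a system is assembled by iteratively routing linkages through $A$ and invoking submodularity of the relevant separator function to peel off well-linked chunks without destroying well-linkedness of the remainder. Second, inside each $B_i$ recursively produce a small grid gadget and stitch consecutive gadgets along the forward/backward linkages, wrapping the whole chain into a ring so as to realise the $k$ concentric directed cycles and the $2k$ vertical paths of alternating direction that make up the cylindrical grid. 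Throughout, one only ever contracts along directed paths whose internal vertices have in-degree one (or out-degree one) within the contracted subgraph, so every contraction used is a legitimate butterfly contraction.

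The hard part is directed routing itself. In contrast to the undirected setting there is no convenient Menger-type theorem for the configurations one needs, and the directed vertex-disjoint paths problem is NP-hard already for two terminal pairs, so one cannot simply ``clean up'' an approximate grid. Concretely, after the two stages above one is typically left with a half-integral or tangled grid, in which horizontal and vertical linkages share vertices and cross in uncontrolled ways, and converting this to an honest integral cylindrical grid minor requires a dichotomy: either the crossing pattern can be untangled to expose a large grid, or it is structured enough --- exhibiting a large tangle-like obstruction within the scaffold --- that one can recurse on a smaller but better-organised instance. Bounding the loss of order incurred at each recursive step, so that the resulting $\Fkt{f}{k}$ is finite (if enormous), is the delicate bookkeeping that makes the complete proof long; for these details I refer to \cite{kawarabayashi2015directed}.
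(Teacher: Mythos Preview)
The paper does not prove \cref{thm:dirgridthm} at all: it is stated with a citation to \cite{kawarabayashi2015directed} and used as a black box, so there is no ``paper's own proof'' to compare against. Your write-up is therefore not competing with anything in the present paper; it is a high-level summary of the Kawarabayashi--Kreutzer argument itself, and you explicitly defer the substantive part to that reference at the end.

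As a sketch of the original proof your outline is broadly faithful in spirit --- start from a large highly linked set via \cref{thm:dklinkedvsdtw}, upgrade to well-linkedness, build a path system, and then untangle --- but it is not a proof in any meaningful sense: every step that carries actual weight (the directed well-linked refinement, the construction of the scaffold, and above all the half-integral-to-integral grid conversion) is asserted rather than carried out, and you acknowledge as much. Given that the paper treats this theorem purely as an imported result, the appropriate thing to do here is exactly what the paper does: cite \cite{kawarabayashi2015directed} and move on.
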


\subsection{Hypergraphs and Hypertree-Width}

A \emph{hypergraph} is a tuple $H=\Brace{V,E}$ where $\Fkt{V}{H}=V$ is the \emph{vertex set} and $\Fkt{E}{H}=E\subseteq 2^{V(H)}$ is the (multi-)set of \emph{(hyper-)edges}.
A vertex of $H$ is \emph{isolated} if there is no edge containing it.
We do not allow isolated vertices, however we will allow multiple edges.

For a hypergraph $H$ and a set $X\subseteq\Fkt{V}{H}$, the \emph{subhypergraph of $H$ reduced to $X$} is the hypergraph $\InducedSubgraph{H}{X}\coloneqq\Brace{X,\CondSet{e\cap X\neq\emptyset}{e\in\Fkt{E}{H}}}$, after the removal of possible isolated vertices.
The \emph{subhypergraph of $H$ reduced by $X$} is then defined as $H\setminus X\coloneqq\InducedSubgraph{H}{\Fkt{V}{H}\setminus X}$.
The \emph{$2$-section}, or \emph{primal graph} of $H$ is the graph
\begin{align*}
	\underline{H}\coloneqq\Brace{\Fkt{V}{H},\CondSet{uv}{u,v\in\Fkt{V}{H}~\text{and}~\Set{u,v}\subseteq e~\text{for some}~e\in\Fkt{E}{H}}}.
\end{align*}
The \emph{dual hypergraph} of $H$ is the hypergraph
\begin{align*}
\Dual{H}\coloneqq\Brace{\Fkt{E}{H},\CondSet{e_v\coloneqq\CondSet{e\in\Fkt{E}{H}}{v\in e}}{v\in\Fkt{V}{H}}}.
\end{align*}
Finally the \emph{linegraph} of $H$ is the graph
\begin{align*}
\LineGraph{H}\coloneqq\Brace{\Fkt{E}{H},\CondSet{ef}{e,f\in\Fkt{E}{H}~\text{and}~e\cap f\neq\emptyset}}.
\end{align*}
Please note that $\LineGraph{H}=\underline{\Dual{H}}$.

The \emph{cycle-hypergraph} of a directed graph $D$ is defined as follows:
\begin{align*}
\CycleHypergraph{D}\coloneqq\Brace{\Fkt{V}{D},\CondSet{\Fkt{V}{C}}{C\subseteq D~\text{is a directed cycle}}}.
\end{align*}
Please note that we might have different directed cycles in $D$ on the same vertex set and so, since we allow multiple edges, we have different hyperedges for those directed cycles.

A hypergraph $H$ is connected if $\underline{H}$ is \emph{connected} and a set $X\subseteq\Fkt{V}{H}$ is \emph{connected} if the reduced hypergraph $\InducedSubgraph{H}{X}$ is connected.
A \emph{connected component} is a maximal connected set of $\Fkt{V}{H}$.
We define paths analogous to paths in undirected graphs, indeed a sequence $\Brace{v_0,v_1,\dots,v_{\ell}}$ of vertices of $H$ is a path if it is a path in $\underline{H}$.
A \emph{tree decomposition} for a hypergraph $H$ is a tree decomposition of $\underline{H}$.

\begin{definition}
Let $H$ be a hypergraph.
A \emph{generalised hypertree decomposition} for $H$ is a tripe $\Brace{T,\beta,\gamma}$ where $\Brace{T,\beta}$ is a tree decomposition for $H$ and $\gamma\colon\Fkt{V}{T}\rightarrow 2^{E(H)}$ such that $\Fkt{\beta}{t}\subseteq\bigcup\Fkt{\gamma}{t}\coloneqq\bigcup_{e\in \gamma(t)}e$ for all $t\in\Fkt{V}{T}$.
We call $\Fkt{\gamma}{t}$ the \emph{guards} at $t$.
The \emph{width} of a generalised hypertree decomposition is defined as
\begin{align*}
\Width{\Brace{T,\beta,\gamma}}\coloneqq\max_{t\in V(T)}\Abs{\Fkt{\gamma}{t}}.
\end{align*}
The \emph{generalised hypertree-width} of $H$, denoted by $\ghw{H}$, is defined as the minimum width over all generalised hypertree decompositions for $H$.
\end{definition}

\begin{definition}
Let $H$ be a hypergraph.
A \emph{hypertree decomposition} for $H$ is a generalised hypertree decomposition $\Brace{T,\beta,\gamma}$ where $T$ is an arborescence that satisfies the following additional condition:
\begin{enumerate}
	\item[] $\Brace{\bigcup\Fkt{\gamma}{t}}\cap\bigcup_{t'\in V(T_t)}\Fkt{\beta}{t'}\subseteq\Fkt{\beta}{t}$ for all $t\in\Fkt{V}{T}$.
\end{enumerate}
The \emph{hypertree-width} of $H$, denoted by $\hw{H}$, is the minimum width over all hypertree decompositions for $H$.
\end{definition}

\begin{lemma}[\cite{gottlob2000comparison,gottlob2002hypertree}]
Let $H$ be a hypergraph, then $\ghw{H}\leq\hw{H}$.
\end{lemma}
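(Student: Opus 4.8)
The plan is to observe that a hypertree decomposition is, up to forgetting the orientation of its underlying arborescence, nothing but a special kind of generalised hypertree decomposition, so that $\ghw{H}$ is obtained as a minimum over a strictly larger family of objects than $\hw{H}$.

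First I would fix a hypertree decomposition $\Brace{T,\beta,\gamma}$ of $H$ witnessing $\hw{H}$, and let $T'$ be the undirected tree obtained from the arborescence $T$ by discarding the orientations of its edges. I claim that $\Brace{T',\beta,\gamma}$ is a generalised hypertree decomposition of $H$. Indeed, the three tree-decomposition axioms refer to $T$ only through its underlying undirected tree (the covering of $\Fkt{V}{H}$, the covering of $\Fkt{E}{H}$, and the requirement that the nodes carrying a fixed vertex $v$ induce a subtree), so $\Brace{T',\beta}$ is a tree decomposition for $H$ precisely because $\Brace{T,\beta}$ is; and the condition $\Fkt{\beta}{t}\subseteq\bigcup\Fkt{\gamma}{t}$ for every $t\in\Fkt{V}{T}$ does not mention the tree at all, hence is inherited verbatim.

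Next I would compare the two width functions. In both definitions the width of a (generalised) hypertree decomposition is computed by the same expression $\max_{t}\Abs{\Fkt{\gamma}{t}}$, and this value is visibly unchanged when $T$ is replaced by $T'$ (the node set and the maps $\beta,\gamma$ are the same). Hence $\Brace{T',\beta,\gamma}$ has width $\hw{H}$ as a generalised hypertree decomposition, and therefore $\ghw{H}\leq\hw{H}$.

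There is essentially no obstacle here. The only point that merits a word is the discrepancy between the two definitions, where a hypertree decomposition is built on an \emph{arborescence} while a generalised hypertree decomposition is built on a plain \emph{tree}; this is resolved simply by forgetting the edge orientations, as above. The additional ``descendant'' condition imposed on hypertree decompositions is a further restriction, so dropping it when passing to the generalised notion is harmless.
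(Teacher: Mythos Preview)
Your argument is correct: every hypertree decomposition is in particular a generalised hypertree decomposition (after forgetting orientations and dropping the descendant condition), with the same width, so the inequality follows immediately. The paper itself does not prove this lemma but merely cites it from \cite{gottlob2000comparison,gottlob2002hypertree}; your proof is precisely the standard one-line justification one would expect.
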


Similar to (undirected) \treewidth and directed treewidth there are several structural obstructions to hypertree-width.
For more details we refer to \cite{adler2007hypertree}.

\begin{definition}
Let $H$ be a hypergraph, a set $W\subseteq\Fkt{E}{H}$ is \emph{$k$-hyperlinked} if for every $S\subseteq\Fkt{E}{H}$ with $\Abs{S}<k$, $H\setminus\bigcup S$ has a component $K$ such that
\begin{align*}
\Abs{\CondSet{e\in W}{e\cap\Fkt{V}{K}\neq\emptyset}}>\frac{\Abs{W}}{2}.
\end{align*}
\end{definition}

\begin{theorem}[\cite{adler2007hypertree}]\label{thm:hlinkedvhw}
Let $H$ be a hypergraph.
If $H$ contains a $k$-hyperlinked set, then $k\leq\ghw{H}$.
Moreover, either $H$ contains a $k$-hyperlinked set, or $\hw{G}\leq 3k+1$.
\end{theorem}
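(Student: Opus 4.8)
The plan is to prove the two assertions separately, along the lines of the duality theorems for (directed) treewidth, Theorems~\ref{thm:dhavensvsdtw} and~\ref{thm:dklinkedvsdtw}: the first assertion is the ``soundness'' of the obstruction and reduces to a statement about subtrees of a decomposition tree, while the second is a recursive balanced-separator construction and is where essentially all of the work sits.

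For the first assertion I argue by contraposition. Let $\Brace{T,\beta,\gamma}$ be a generalised hypertree decomposition of $H$ with $\Abs{\Fkt{\gamma}{t}}<k$ for all $t$, and let $W\subseteq\Fkt{E}{H}$; I show $W$ is not $k$-hyperlinked. The starting observation is that each $e\in\Fkt{E}{H}$ is a clique of the primal graph $\underline{H}$, hence is contained in some bag, so that $B_e\coloneqq\CondSet{t\in\Fkt{V}{T}}{e\subseteq\Fkt{\beta}{t}}$ is a non-empty subtree of $T$; thus $W$ induces a family of subtrees of $T$. A standard ``centroid'' walk --- repeatedly stepping towards any branch of $T$ at the current node carrying more than half of the subtrees $B_e$, which strictly decreases the maximum such count and hence terminates --- produces a node $t^{\ast}$ such that $\Abs{\CondSet{e\in W}{B_e\subseteq\Fkt{V}{T'}}}\leq\frac{\Abs{W}}{2}$ for every component $T'$ of $T-t^{\ast}$. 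Set $S\coloneqq\Fkt{\gamma}{t^{\ast}}$, so $\Abs{S}<k$. Since $\bigcup S\supseteq\Fkt{\beta}{t^{\ast}}$, the usual separation property of tree decompositions puts the vertex set of every component $K$ of $H\setminus\bigcup S$ inside $\bigcup_{t\in V(T')}\Fkt{\beta}{t}$ for a single branch $T'$ of $T-t^{\ast}$; and if $e\in W$ meets $K$, then for any $v\in e\cap\Fkt{V}{K}$ the subtree $\CondSet{t}{v\in\Fkt{\beta}{t}}$ avoids $t^{\ast}$ and meets $\Fkt{V}{T'}$, hence lies in $T'$, whence $B_e\subseteq\Fkt{V}{T'}$. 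Therefore $\Abs{\CondSet{e\in W}{e\cap\Fkt{V}{K}\neq\emptyset}}\leq\frac{\Abs{W}}{2}$, so $W$ is not $k$-hyperlinked, and we conclude $\ghw{H}\geq k$ whenever $H$ contains a $k$-hyperlinked set.

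For the second assertion --- the substantial part --- assume $H$ has no $k$-hyperlinked set and build a hypertree decomposition of width at most $3k+1$ by recursion on pairs $\Brace{K,W}$, where $K$ is a connected subhypergraph produced so far and $W\subseteq\Fkt{E}{H}$ is a set of at most $2k$ hyperedges (padded up to this size while $K$ is large) whose union contains every vertex of $K$ having a neighbour outside $K$. As $W$ is not $k$-hyperlinked, there is $S\subseteq\Fkt{E}{H}$ with $\Abs{S}<k$ such that every component of $H\setminus\bigcup S$ meets at most $\frac{\Abs{W}}{2}$ hyperedges of $W$; one creates a node $t$ with guard $\Fkt{\gamma}{t}\coloneqq W\cup S$, takes $\Fkt{\beta}{t}$ to be exactly the part of $\bigcup\Fkt{\gamma}{t}$ still needed at $t$, and recurses into the components $K'$ of $K\setminus\bigcup S$, handing each the strictly smaller boundary set $\CondSet{e\in W}{e\cap\Fkt{V}{K'}\neq\emptyset}\cup S$; the main recursive step thus keeps guards of size at most $2k+(k-1)\leq 3k+1$, the padding forces the separator to genuinely split $K$ so the recursion terminates, and the few pieces that no longer admit a useful separator are absorbed into single base nodes (a routine case analysis confirms they respect the width bound). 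That $\Brace{T,\beta}$ is then a tree decomposition of $\underline{H}$ with $\Fkt{\beta}{t}\subseteq\bigcup\Fkt{\gamma}{t}$ is routine; the main obstacle is to also guarantee the genuine hypertree condition $\Brace{\bigcup\Fkt{\gamma}{t}}\cap\bigcup_{t'\in V(T_t)}\Fkt{\beta}{t'}\subseteq\Fkt{\beta}{t}$. I would secure this by orienting $T$ away from the root of the recursion and keeping the bags as small as the tree-decomposition axioms allow --- a guard vertex enters $\Fkt{\beta}{t}$ only if it is inherited from the parent of $t$ or lies in the closures of at least two child subproblems of $t$ --- and then using the connectedness of the components cut out by the separators to show, by induction down $T$, that a guard vertex of $t$ not placed in $\Fkt{\beta}{t}$ occurs in the closure of at most one $T_{t'}$ with $t'$ a child of $t$ and never reappears below that child; this gives the condition.
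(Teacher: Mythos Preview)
The paper does not prove \cref{thm:hlinkedvhw}; it is quoted from \cite{adler2007hypertree} as a black box, so there is no ``paper's own proof'' to compare against. Your sketch follows the standard route taken in that reference: soundness via a centroid node of the decomposition tree, and completeness via a recursive balanced-separator construction that keeps a boundary of at most $2k$ hyperedges and adds a separator of fewer than $k$ hyperedges at each step.

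The first half is essentially complete. In the second half the outline is right, but two points are underdetermined. First, your padding argument (``padded up to this size while $K$ is large'') needs to actually guarantee progress: you must argue that the balanced split of the padded $W$ forces each child component to be strictly smaller than $K$ (or that the boundary strictly shrinks), since otherwise the recursion need not terminate; the usual fix is to pad $W$ with hyperedges meeting $K$ and then observe that no child component can contain all of them. Second, the hypertree condition is where the real work in \cite{adler2007hypertree} lies, and your proposed fix --- keep bags minimal and argue by induction that an unplaced guard vertex never reappears below --- is stated at too high a level to be checkable; in particular, a vertex of $\bigcup S$ may well lie in several child components' closures, and you need a concrete rule for which child inherits it (and why that rule is consistent with the tree-decomposition axioms for $\underline{H}$). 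These are the known delicate points of the proof; the rest of your plan is sound.
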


Similar to \treewidth-style decompositions, there are width parameters based on so called \emph{branch decompositions} for many combinatorial objects.
In general a branch decomposition consists of a subcubic tree, i.\@ e.\@ trees with maximum degree at most three, a bijection from the set of leaves $\LineGraph{T}$ some ground set of the object in question, and a way to evaluate the bipartitions of said ground set induced by the bijection and the edges ot $T$.

For a tree $T$ and an edge $t_1t_2\in\Fkt{E}{T}$ we denote by $T_{t_i}$ the unique component of $T-t_1t_2$ containing $t_i$.
If $\delta\colon\LineGraph{T}\rightarrow X$ is a bijection between the leaves of $T$ and some set $X$, we set $\Fkt{\delta}{T_{t_i}}\coloneqq\CondSet{\Fkt{\delta}{\ell}}{\ell\in\LineGraph{T}\cap\Fkt{V}{T_{t_i}}}$.

\begin{definition}
Let $H$ be a hypergraph.
A \emph{hyperbranch decomposition} for $H$ is a tuple $\Brace{T,\delta}$ where $T$ is a subcubic tree and $\delta\colon\LineGraph{T}\rightarrow\Fkt{E}{H}$ is a bijection.
The \emph{thickness} of an edge $t_1t_2\in\Fkt{E}{T}$ is defined as follows:
\begin{align*}
\Thickness{t_1t_2}\coloneqq \min \Set{\Abs{S}\left|~S\subseteq\Fkt{E}{H}~\text{such that}~\bigcup\Fkt{\delta}{t_1}\cap\bigcup\Fkt{\delta}{T_{t_2}}\subseteq\bigcup S\right.}.
\end{align*}
The \emph{width} of a hyperbranch decomposition $\Brace{T,\delta}$ is defined as
\begin{align*}
\Width{\Brace{T,\delta}}\coloneqq\max_{e\in E(T)}\Thickness{e}.
\end{align*}
The \emph{hyperbranch-width} of $H$, denoted by $\hbw{H}$, is the minimum width over all hyperbranch decompositions for $H$.
\end{definition}

\begin{theorem}[\cite{adler2007hypertree}]\label{thm:hbwvshw}
Let $H$ be a hypergraph, then $\hbw{H}\leq\ghw{H}\leq\hw{H}\leq9\hbw{H}+1$.
\end{theorem}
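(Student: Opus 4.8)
**Proof proposal for Theorem (\texorpdfstring{$\hbw{H}\leq\ghw{H}\leq\hw{H}\leq 9\hbw{H}+1$}{hbw <= ghw <= hw <= 9 hbw + 1}).**

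The plan is to establish the chain of inequalities one link at a time, since the middle inequality $\ghw{H}\leq\hw{H}$ is already available from the earlier lemma of Gottlob et al. So it remains to prove $\hbw{H}\leq\ghw{H}$ and $\hw{H}\leq 9\hbw{H}+1$. For the first inequality I would start from an optimal generalised hypertree decomposition $\Brace{T,\beta,\gamma}$ of width $w=\ghw{H}$ and convert it into a hyperbranch decomposition. The standard route is the same as for passing from tree decompositions to branch decompositions in the undirected setting: first normalise $T$ so that every internal node has degree at most $3$ (splitting high-degree nodes, copying the bag and the guard set along the new edges), then attach each hyperedge of $H$ as a new leaf below some node $t$ whose guard set $\Fkt{\gamma}{t}$ covers it (such a node exists because in a generalised hypertree decomposition $\Fkt{\beta}{t}\subseteq\bigcup\Fkt{\gamma}{t}$ and every edge is contained in some bag), and finally suppress degree-two vertices to make the tree subcubic. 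The key estimate is that for every edge $e=t_1t_2$ of the resulting subcubic tree, the ``interface'' $\bigcup\Fkt{\delta}{t_1}\cap\bigcup\Fkt{\delta}{T_{t_2}}$ is contained in a bag $\Fkt{\beta}{t}$ of the original decomposition for a node $t$ on the path between the two sides — this is exactly the property that a tree decomposition's separators are bags — and that bag is covered by at most $w$ hyperedges via $\gamma$. Hence every edge has thickness at most $w$, giving $\hbw{H}\leq w=\ghw{H}$.

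For the harder inequality $\hw{H}\leq 9\hbw{H}+1$ I would take an optimal hyperbranch decomposition $\Brace{T,\delta}$ of width $k=\hbw{H}$ and build a hypertree decomposition. Here I expect to mimic the classical argument that converts a branch decomposition of width $k$ into a tree decomposition of width roughly $\tfrac{3}{2}k$, but now bookkeeping guards instead of vertices. Root $T$ at a subdivision point of some edge, orienting it into an arborescence. For each node $t\in\Fkt{V}{T}$ with incident edges $e_1,e_2,e_3$ (taking the parent edge and the two child edges), set $\Fkt{\gamma}{t}$ to be the union of the three minimum witness sets realising $\Thickness{e_1},\Thickness{e_2},\Thickness{e_3}$, so $\Abs{\Fkt{\gamma}{t}}\leq 3k$, and let $\Fkt{\beta}{t}\coloneqq\bigcup\Fkt{\gamma}{t}$ intersected with the appropriate separator pieces — essentially $\bigcup\Fkt{\gamma}{t}$ capped to the vertices that actually need to appear at $t$. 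One then checks the three tree-decomposition axioms and the covering condition $\Fkt{\beta}{t}\subseteq\bigcup\Fkt{\gamma}{t}$, which holds by construction. The factor $9$ (rather than $3$) and the additive $+1$ come from the extra slack needed to also enforce the descendant condition $\Brace{\bigcup\Fkt{\gamma}{t}}\cap\Fkt{\beta}{T_t}\subseteq\Fkt{\beta}{t}$ that distinguishes a hypertree decomposition from a merely generalised one: to repair violations of this condition one typically has to enlarge $\gamma$ at each node by the guards used strictly below it that still reach above, roughly tripling the guard count again, and one extra guard may be forced at the root.

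The main obstacle is the last inequality, and within it the descendant (``$\gamma$ below equals $\beta$'') condition: unlike branch-width-to-tree-width for graphs, where one only has to control a single vertex separator, here the guards are sets of hyperedges and a guard chosen to cover a separator low in the tree can spill vertices into bags higher up, breaking the special condition of hypertree decompositions. I would handle this by a careful choice of which vertices to place in $\Fkt{\beta}{t}$ — namely only those appearing in $\bigcup\Fkt{\gamma}{t}$ that are ``active'' across $t$, i.e.\ occur both in some leaf below $t$ and in some leaf not below $t$, together with the vertices of edges assigned to leaves at or below $t$ that are covered by $\Fkt{\gamma}{t}$ — and then verifying that connectivity of the $\beta$-occurrences is preserved. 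The constants would then be tracked exactly as in the proof of Theorem~\ref{thm:hbwvshw}'s source \cite{adler2007hypertree}, and I would not expect to improve on the stated $9k+1$ bound.
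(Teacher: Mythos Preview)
The paper does not prove this theorem: it is quoted from \cite{adler2007hypertree} and stated without proof, as a background fact about hyperbranch-width versus (generalised) hypertree-width. So there is no ``paper's own proof'' to compare your proposal against.

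That said, your outline is a reasonable sketch of the standard argument. The first inequality $\hbw{H}\leq\ghw{H}$ goes through essentially as you describe (normalise to a subcubic tree, hang hyperedges as leaves, observe that separators are covered by a single bag's guard set). For the bound $\hw{H}\leq 9\hbw{H}+1$, your plan of rooting the branch decomposition and taking the union of the three incident witness sets as guards is the right starting point, and your identification of the descendant condition as the main obstacle is accurate. However, the actual derivation in \cite{adler2007hypertree} does not proceed by directly ``tripling again'' to fix the special condition; rather, the factor $9$ and the $+1$ arise via a detour through the robber-and-marshals game and $k$-hyperlinked sets (Theorem~\ref{thm:hlinkedvhw}): one shows that a hyperbranch decomposition of width $k$ rules out a $(3k+1)$-hyperlinked set, and then invokes the $3k+1$ bound from the game characterisation. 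Your direct construction would need a more careful argument to simultaneously achieve the tree-decomposition axioms and the special condition with the stated constants; as written, the step ``roughly tripling the guard count again'' is hand-waving and it is not clear it actually delivers a valid hypertree decomposition.
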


\section{Directed Treewidth hand Hypertree-Width}

This section is dedicated to establish a connection between directed treewidth and \hwText.

Let $D$ be a digraph together with a directed tree decomposition $\Brace{T_D,\beta_D,\gamma_D}$ as well as $H$ a hypergraph together with a hypertree decomposition $\Brace{T_H,\beta_H,\gamma_H}$.
Note that in both decomposition concepts we need bags to contain the vertices of the underlying object - be it a digraph or a hypergraph - and guards to cover or separate the vertices contained in rooted subtrees.
However, the guards in the case of digraphs are sets of vertices designed to guard certain directed walks, while the guards in the case of hypergraphs are edges chosen to cover all vertices within the bag.
Taking a step back from the actual directed tree decomposition and considering the equivalent cops and robber game it becomes apparent that the purpose of the guards - or cops in this case now - is in fact not to prevent the robber from traversing along certain directed walks, but to destroy strong connectivity.
If one wanted to have a set $X$ of vertices in a digraph to isolate - in the sense of strong connectivity - from the rest of a digraph by deleting some vertices, the easiest way to do this would be to find a hitting set for all directed cycles in $D$ that have a vertex in $X$ and one in $\Complement{X}$.
If we were to consider the cycle-hypergraph $\CycleHypergraph{D}$ associated with $D$ we would say, that we were looking for a hitting set for all edges of $\CycleHypergraph{D}$ in the cut $\Cut{\CycleHypergraph{D}}{X}\coloneqq\CondSet{e\in\Fkt{E}{\CycleHypergraph{D}}}{e\cap X\neq\emptyset~\text{and}~e\cap\Complement{X}\neq\emptyset}$.

We are now going to translate this situation into the language of the hypergraphic dual $\Dual{\CycleHypergraph{D}}$.
In the dual hypergraph every vertex $v$ becomes a (hyper-)edge, say $e_v$, and every edge $C\in\Fkt{E}{\CycleHypergraph{D}}$ becomes a vertex, say $v_C$.
Observe that $C\in\Cut{\CycleHypergraph{D}}{X}$ holds if and only if $v_C\in
\Brace{\bigcup_{v\in X\phantom{\Complement{X}\!\!\!\!\!\!}}e_v}\cap\Brace{\bigcup_{v\in\Complement{X}}e_v}$.
Similarly, $S\subseteq\Fkt{V}{\CycleHypergraph{D}}$ is a hitting set for $\Cut{\CycleHypergraph{D}}{X}$ if and only if for every $v_C\in
\Brace{\bigcup_{v\in X\phantom{\Complement{X}\!\!\!\!\!\!}}e_v}\cap\Brace{\bigcup_{v\in\Complement{X}}e_v}$ there exists some $x\in S$ such that $v_C\in e_x$.

This section is organised as follows:
We first define a notion of directed branchwidth and show its equivalence to directed treewidth.
Then we give a more formal proof of the observations from the discussion above and show that the directed branchwidth of a digraph $D$ coincides with the hyperbranch width of its dual cycle-hypergraph $\Dual{\CycleHypergraph{D}}$.
We also show that we can create a generalised hypertree decomposition for $\Dual{\CycleHypergraph{D}}$ directly from a directed tree decomposition of $D$ itself and that $k$-linked sets in $D$ coincide with $k$-hyperlinked sets in $\Dual{\CycleHypergraph{D}}$.

\subsection{Directed Branchwidth}

The idea of branch decompositions for digraphs is not a new one, the notion of cyclewidth as defined in \cite{hatzel2019cyclewidth} can be seen as a, though slightly more complicated, variant of directed branchwidth.

\begin{definition}
Let $D$ be a digraph.
A \emph{directed branch decomposition} for $D$ is a tuple $\Brace{T,\delta}$ where $T$ is a subcubic tree and $\delta\colon\LineGraph{T}\rightarrow\Fkt{V}{D}$ is a bijection.
The \emph{thickness} of an edge $t_1t_2\in\Fkt{E}{T}$ is defined as follows:
\begin{align*}
	\Thickness{t_1t_2}\coloneqq\min\CondSet{\Abs{S}}{S\subseteq\Fkt{V}{D}~\text{and}~S~\text{is a hitting set for}~\Cut{\CycleHypergraph{D}}{\Fkt{\delta}{T_{t_1}}}}.
\end{align*}
The \emph{width} of a directed branch decomposition $\Brace{T,\delta}$ is defined as
\begin{align*}
\Width{\Brace{T,\delta}}\coloneqq\max_{e\in E(T)}\Thickness{e}.
\end{align*}
The \emph{\dbwText} of $D$, denoted by $\dbw{D}$, is the minimum width over all directed branch decompositions for $D$.
\end{definition}

Please note that the cut-function is symmetric and thus, in the definition above, we have $\Cut{\CycleHypergraph{D}}{T_{t_1}}=\Cut{\CycleHypergraph{D}}{T_{t_2}}$.
If $t_1t_2$ is an edge in a directed branch decomposition and $C\in\Cut{\CycleHypergraph{D}}{\Fkt{\delta}{T_{t_1}}}$, we say that $C$ \emph{crosses} $t_1t_2$.

First we will show that we can obtain a directed branch decomposition of bounded width from a directed tree decomposition.
For a digraph $D$ we call a tuple $\Brace{T,\beta,\gamma}$ a \emph{leaf directed tree decomposition}, if $\Brace{T,\beta,\gamma}$ satisfies all requirements for a being a directed tree decomposition except for the following: $\Fkt{\beta}{t}=\emptyset$ for all non-leaf vertices $t$ of $T$ and $\Abs{\Fkt{\beta}{\ell}}=1$ for all leaf vertices $\ell$ of $T$ s.t. $\Fkt{\beta}{\ell}\neq\Fkt{\beta}{\ell}$ for all distinct $\ell,\ell'\in\LineGraph{T}$, and $T$ is subcubic.
The \emph{width} of a leaf directed tree decomposition is defined analogous to the width of a directed tree decomposition.

\begin{lemma}[\cite{hatzel2019cyclewidth}]\label{lemma:leafdtw}
If $D$ is a digraph that has a directed tree decomposition of width $k$, then there exists a leaf directed tree decomposition of width $k$ for $D$.
\end{lemma}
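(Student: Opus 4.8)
The plan is to start from a directed tree decomposition $\Brace{T,\beta,\gamma}$ of $D$ of width $k$ and massage it in three stages: first make every bag have size at most one, then move all non-trivial bags to leaves, and finally enforce that $T$ is subcubic and that distinct leaves carry distinct (singleton) bags. Throughout, the crucial invariant to preserve is the defining guard condition: for each arc $\Brace{d,t}$ of the arborescence, $\Fkt{\gamma}{\Brace{d,t}}$ must hit every directed walk that starts and ends in $\Fkt{\beta}{T_t}$ and leaves $\Fkt{\beta}{T_t}$. I also have to keep control of $\Fkt{\Gamma}{t}=\Fkt{\beta}{t}\cup\bigcup_{t\sim e}\Fkt{\gamma}{e}$ so that the width does not grow beyond $k$.

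First I would \emph{split large bags}. Take a node $t$ with $\Fkt{\beta}{t}=\Set{v_1,\dots,v_m}$, $m\ge 2$, having parent edge $e$ and children edges $e_1,\dots,e_p$. Replace $t$ by a directed path $t^1\to t^2\to\dots\to t^m$ (oriented away from the old parent), set $\Fkt{\beta}{t^i}\coloneqq\Set{v_i}$, attach the old parent of $t$ to $t^1$ and hang all former children of $t$ below $t^m$. For the new internal edges $\Brace{t^i,t^{i+1}}$ put $\Fkt{\gamma}{\Brace{t^i,t^{i+1}}}\coloneqq\Set{v_1,\dots,v_i}\cup\Fkt{\gamma}{e}$; for the old parent edge keep $\Fkt{\gamma}{e}$ and for the old children edges keep their guards. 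One checks that $\Fkt{\beta}{T_{t^{i+1}}}=\Fkt{\beta}{T_t}\setminus\Set{v_1,\dots,v_i}$, so any directed walk witnessing the guard condition at $\Brace{t^i,t^{i+1}}$ is in particular a walk that starts and ends in $\Fkt{\beta}{T_t}$, hence was already hit by $\Fkt{\gamma}{e}$ — together with $\Set{v_1,\dots,v_i}$ being removed from the subtree this gives the needed hitting set, and $\Abs{\Set{v_1,\dots,v_i}\cup\Fkt{\gamma}{e}}\le\Abs{\Fkt{\beta}{t}}+\Abs{\Fkt{\gamma}{e}}-1$, so $\Abs{\Fkt{\Gamma}{t^i}}$ is bounded by the old $\Abs{\Fkt{\Gamma}{t}}\le k+1$. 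Iterating over all nodes yields a decomposition in which every bag is a singleton or empty, still of width $k$.

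Next I would \emph{push singletons to the leaves}. If $t$ is an internal node with $\Fkt{\beta}{t}=\Set{v}$ and some child $t'$, I subdivide an outgoing edge of $t$, creating a new leaf $\ell$ with $\Fkt{\beta}{\ell}\coloneqq\Set{v}$, set $\Fkt{\beta}{t}\coloneqq\emptyset$, and let the guard on the new pendant edge $\Brace{t,\ell}$ be $\Fkt{\gamma}{\Brace{t,\ell}}\coloneqq\Set{v}$. Since $\Fkt{\beta}{T_\ell}=\Set{v}$ is a single vertex, no directed walk can start and end in it and leave it, so the guard condition at $\Brace{t,\ell}$ is vacuous; and moving $v$ out of $\Fkt{\beta}{t}$ while adding it to the guard on the new edge changes neither $\Fkt{\Gamma}{t}$ nor, after checking, any $\Fkt{\Gamma}{\cdot}$ by more than leaving it unchanged in size. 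One must also handle the root if it is internal with a nonempty bag (e.g. re-root or attach a pendant leaf). After this step all nonempty bags sit at leaves and are singletons. Finally, to make $T$ subcubic I replace each high-degree node by a small binary tree with empty bags (guards on the new edges can be taken as the union of the relevant old guards restricted appropriately, which does not increase width since these are sub-multisets of a single old $\Fkt{\Gamma}{\cdot}$), and if two leaves $\ell,\ell'$ would share the same singleton bag I simply note this cannot happen because $\beta$ of the original decomposition \emph{partitions} $\Fkt{V}{D}$, a property all the above operations preserve.

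The main obstacle I expect is the bookkeeping in the second stage: verifying that rerouting the guards when a bag is emptied and reattached as a pendant leaf does not violate the guard condition at the \emph{parent} edge of $t$ or at nodes further up, since $\Fkt{\beta}{T_t}$ is unchanged as a set but the tree shape changed. The clean way around this is to observe that the quantity $\Fkt{\beta}{T_t}$ (the union over the rooted subtree) is genuinely invariant under all the moves described, and the guard condition for an edge depends only on $\Fkt{\beta}{T_t}$ and the guard set on that edge — never on the internal structure of $T_t$ — so conditions established before the surgery survive it verbatim. Once that is isolated as the key point, each of the three reductions is routine, and the composite decomposition is the desired leaf directed tree decomposition of width $k$.
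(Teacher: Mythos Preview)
The paper does not prove this lemma; it is quoted from \cite{hatzel2019cyclewidth} without argument, so there is no in-paper proof to compare against. Your three-stage plan (split bags to singletons, push them to pendant leaves, make the tree subcubic) is the natural strategy, and the first two stages are essentially correct --- your closing observation that the guard condition at an edge $(d,t)$ depends only on the set $\beta(T_t)$ and the guard, never on the internal shape of $T_t$, is exactly the right invariant to carry through.

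The genuine gap is in the third stage. You assert that when a high-degree node $t$ is expanded into a binary caterpillar, the guards on the new internal edges ``can be taken as the union of the relevant old guards \ldots\ since these are sub-multisets of a single old $\Gamma(\cdot)$''. This fails in general. After stages one and two, $\Gamma(t)$ may consist of a single vertex (the one shipped to the pendant leaf) while all the original child guards are empty; yet a new internal edge of the caterpillar can separate two children $c_a,c_b$ from a third child $c_j$, and then the new guard must hit every walk that starts in $\beta(T_{c_a})$, passes through $\beta(T_{c_j})$, and ends in $\beta(T_{c_b})$. No subset of $\Gamma(t)$ need meet such a walk. Concretely: let $D$ be the DAG on $\{1,2,3,4\}$ with edges $2\to 3\to 4$, and suppose after your first two stages $t$ has empty bag, four leaf children carrying $\{1\},\{2\},\{3\},\{4\}$ (the first being the pendant from stage two with guard $\{1\}$), and all other child guards empty, so $\Gamma(t)=\{1\}$. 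If the caterpillar places the $\{3\}$-leaf on one side and $\{2\},\{4\}$ on the other, the new internal edge must hit the walk $2\to 3\to 4$, and no subset of $\{1\}$ does. The lemma is still true here --- taking the new guard to be $\{3\}$ keeps all $\Gamma$-sets of size at most one --- but that guard is \emph{not} contained in the old $\Gamma(t)$. A correct argument for this stage needs either a careful ordering of the children in the caterpillar or a direct bound on the new $\Gamma$ values that does not route through containment in the old one; this is the step where the actual work in the cited reference sits.
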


\begin{lemma}\label{lemma:dbwleqdtw}
Let $D$ be a digraph, then $\dbw{D}\leq\dtw{D}+1$.
\end{lemma}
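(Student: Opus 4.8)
We know directed treewidth equals some value $k = \dtw{D}$. We want a directed branch decomposition of width at most $k+1$. The plan is to start from a leaf directed tree decomposition, which exists with the same width $k$ by Lemma \ref{lemma:leafdtw}. So fix $(T,\beta,\gamma)$ a leaf directed tree decomposition of width $k$: $T$ is a subcubic arborescence, each leaf $\ell$ has $|\beta(\ell)| = 1$, all internal bags are empty, and the leaf bags are pairwise distinct singletons covering $\Fkt{V}{D}$. Define $\delta\colon\LineGraph{T}\to\Fkt{V}{D}$ by sending each leaf $\ell$ to the unique vertex in $\beta(\ell)$; this is a bijection. Then $(T,\delta)$ — after forgetting the orientation of $T$ — is a directed branch decomposition; it remains only to bound its width.

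First I would observe how to bound the thickness of an edge. Fix $e = t_1 t_2 \in \Fkt{E}{T}$; since $T$ is an arborescence, $e$ is oriented, say $e = (t_1,t_2)$ so that $T_{t_2}$ is a rooted subtree in the sense of the directed tree decomposition. Then $\Fkt{\delta}{T_{t_2}}$ is exactly $\Fkt{\beta}{T_{t_2}}$, the union of the leaf bags below $t_2$. I claim that $\Fkt{\gamma}{(t_1,t_2)}$ is a hitting set for $\Cut{\CycleHypergraph{D}}{\Fkt{\delta}{T_{t_2}}}$. Indeed, any directed cycle $C$ in that cut has a vertex in $\Fkt{\beta}{T_{t_2}}$ and a vertex outside it; traversing $C$ we get a closed directed walk that starts and ends in $\Fkt{\beta}{T_{t_2}}$ and visits a vertex not in $\Fkt{\beta}{T_{t_2}}$, so by the defining property of directed tree decompositions $\Fkt{\gamma}{(t_1,t_2)}$ meets $\Fkt{V}{C}$. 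Hence $\Thickness{e} \leq |\Fkt{\gamma}{(t_1,t_2)}|$.

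Now I would bound $|\Fkt{\gamma}{(t_1,t_2)}|$ using the width of the decomposition. The width bound gives $|\Fkt{\Gamma}{t}| \leq k+1$ for every node $t$, where $\Fkt{\Gamma}{t} = \Fkt{\beta}{t} \cup \bigcup_{t \sim e'} \Fkt{\gamma}{e'}$. Since for an internal node $t$ the bag $\Fkt{\beta}{t}$ is empty and in particular $\Fkt{\gamma}{e} \subseteq \Fkt{\Gamma}{t_2}$ whenever $t_2$ is an internal endpoint of $e$, we get $|\Fkt{\gamma}{e}| \leq |\Fkt{\Gamma}{t_2}| \leq k+1$; and if $t_2$ happens to be a leaf then $|\Fkt{\gamma}{e}| \leq |\Fkt{\Gamma}{t_2}| \leq k+1$ as well (a leaf has exactly one incident edge). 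Combining, $\Thickness{e} \leq k+1$ for every $e \in \Fkt{E}{T}$, so $\Width{(T,\delta)} \leq k+1 = \dtw{D}+1$, which is what we want.

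The main thing to be careful about is the reduction to a \emph{leaf} directed tree decomposition and checking that $(T,\delta)$ is genuinely a directed branch decomposition — i.e. that $T$ is subcubic (guaranteed by Lemma \ref{lemma:leafdtw}) and that $\delta$ is a bijection onto $\Fkt{V}{D}$ (guaranteed by the distinctness of leaf bags and the covering property). The only real subtlety in the argument itself is orientation bookkeeping: the thickness in a directed branch decomposition is defined symmetrically via $\Cut{\CycleHypergraph{D}}{\Fkt{\delta}{T_{t_1}}}$, while the guard property of a directed tree decomposition is stated asymmetrically for the rooted subtree $T_{t}$; one uses the remark following the definition of directed branch decomposition that $\Cut{\CycleHypergraph{D}}{\Fkt{\delta}{T_{t_1}}} = \Cut{\CycleHypergraph{D}}{\Fkt{\delta}{T_{t_2}}}$ to align the two. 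I do not expect any genuine obstacle; the content is essentially that the guards of a directed tree decomposition already \emph{are} hitting sets for the relevant cuts of the cycle hypergraph, so almost nothing is lost in passing to the branch-decomposition formulation.
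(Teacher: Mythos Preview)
Your proof is correct and follows essentially the same approach as the paper: invoke Lemma~\ref{lemma:leafdtw} to obtain a leaf directed tree decomposition of width $k$, read off $\delta$ from the singleton leaf bags, and observe that the guard $\gamma(e)$ of each edge is already a hitting set for the corresponding cut of the cycle hypergraph, with $|\gamma(e)|\leq k+1$ by the width bound. If anything, your write-up is more careful than the paper's own proof in spelling out why the guard hits every crossing cycle (the closed-walk argument) and in handling the orientation mismatch between the asymmetric guard condition and the symmetric cut.
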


\begin{proof}
Let $\dtw{D}=k$, then by \cref{lemma:leafdtw}, there exists a leaf directed tree decomposition $\Brace{T,\beta,\gamma}$ of width $k$ for $D$.
Then $T$ is already a subcubic tree and $\delta$, obtained from $\beta$ by setting $\Fkt{\delta}{\ell}\coloneqq v$ if and only if $v\in\Fkt{\beta}{\ell}$, is a bijection between the leaves of $T$ and the vertices of $D$.
Moreover, let $t_1t_2\in\Fkt{E}{T}$ be any edge, then $\Fkt{\gamma}{t_1t_2}$ is a hitting set for all directed cycles in $\Cut{\CycleHypergraph{D}}{\delta{T_{t_1}}}$ by definition.
With $\Abs{\Fkt{\gamma}{t_1t_2}}\leq k+1$ our claim follows.
\end{proof}

To show that \dbwText\@ and directed treewidth are indeed equivalent, we need to bound $\dtw{D}$ in terms of $\dbw{D}$.
This is achieved by providing a winning strategy for a bounded number of cops in the directed cops and robber game.

\begin{lemma}\label{lemma:dcnleq3dbw}
If $D$ is a digraph with a directed branch decomposition of width $k$, then $\dcn{D}\leq 3k$.
\end{lemma}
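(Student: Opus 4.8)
The plan is to play the directed cops and robber game on $D$ using a directed branch decomposition $\Brace{T,\delta}$ of width $k$ as a guide, in much the same way one plays the undirected cops and robber game along a branch decomposition. The cops will maintain the invariant that the robber's current strong component $R_i$ of $D-X_i$ is ``localised'' at some edge $e$ of $T$, meaning $\Fkt{V}{R_i}\subseteq\Fkt{\delta}{T_{t}}$ for one of the two sides $T_{t}$ of $T-e$. Since every directed cycle through a vertex of $\Fkt{\delta}{T_{t}}$ and a vertex outside it crosses $e$, and such cycles have a hitting set of size at most $k$, the cops will place, at each step, the hitting set guarding the relevant edge of $T$ plus the hitting set for a neighbouring edge — a bounded number of vertices, which is where the factor $3$ enters.

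First I would set up notation: root $T$ arbitrarily (or rather, process $T$ as a subcubic tree); for an edge $e=t_1t_2$ let $S_e$ with $\Abs{S_e}\le k$ be a hitting set for $\Cut{\CycleHypergraph{D}}{\Fkt{\delta}{T_{t_1}}}$ witnessing the width bound. The key structural fact to record is that if the robber occupies a strong component $R$ entirely inside $\Fkt{\delta}{T_{t_1}}\setminus S_e$, then $R$ cannot reach any vertex of $\Fkt{\delta}{T_{t_2}}$ and return: any closed directed walk through $R$ and $\Fkt{\delta}{T_{t_2}}$ contains a directed cycle crossing $e$, hence meets $S_e$; so after placing $S_e$ the robber is confined to the $t_1$-side. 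I would then describe the move: given the robber at an edge $e$ with the relevant side $T_{t}$ a subtree having an internal vertex $t$ with the two further edges $e', e''$ leaving $t$ on that side, the cops, who currently occupy roughly $S_e\cup S_{e'}$ (or similar), additionally place $S_{e'}\cup S_{e''}$, forcing the robber into the side of $e'$ or of $e''$; then the cops retreat to just the hitting sets relevant to that smaller subtree. At each moment the occupied set is contained in a union of at most three of the sets $S_f$, so at most $3k$ vertices; one checks the legality condition of the game (that the $X_i$ before and after a move lie in a common strong component of $D-(X_i\cap X_{i+1})$) holds because we only ever add vertices before removing others — standard monotone-search bookkeeping. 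Since $T$ is finite and the subtree containing the robber strictly shrinks, the robber is caught in finitely many moves; when the subtree is a single leaf $\ell$, the robber is confined to $\Set{\Fkt{\delta}{\ell}}$, which is not a strong component once that vertex is taken, so the robber loses. This yields $\dcn{D}\le 3k$.

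I expect the main obstacle to be making the ``localised at an edge'' invariant precise and checking it is genuinely maintained through a move, together with the accounting that never more than three hitting sets are simultaneously present — the subcubic structure of $T$ is exactly what guarantees a vertex has at most two ``downward'' edges, but one must be careful at the start of the game (the robber picks $R_0$ in $D-X_0$ for the cops' opening position $X_0$; choosing $X_0$ to be the hitting sets of the two edges at some internal vertex immediately localises $R_0$) and one must ensure that the transition from guarding edge $e$ to guarding a child edge $e'$ respects the game's connectivity rule, i.e. that we pass through an intermediate position $X_i\cup X_{i+1}$ which is itself a legal intermediate configuration. A secondary technical point is that $\Cut{\CycleHypergraph{D}}{\cdot}$ talks about vertex sets of directed cycles, while the robber moves along strong components; the bridge is that a strong component $K$ of $D-S$ with vertices on both sides of a cut induces, together with a returning walk, a directed cycle in that cut, so the hitting set property transfers — I would isolate this as a short claim before the main argument.
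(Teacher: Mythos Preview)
Your proposal is correct and follows essentially the same strategy as the paper: for each edge $e$ of the subcubic tree $T$ fix a hitting set $S_e$ of size at most $k$, let the cops occupy at each step the union of the $S_e$ over the (at most three) edges incident with the current node, and use $S_{t_it_{i+1}}\subseteq X_i\cap X_{i+1}$ to argue the robber stays in the shrinking subtree $\Fkt{\delta}{T_{t_{i+1}}}$. The only cosmetic differences are that the paper starts at a leaf $\ell$ (using $\Set{\Fkt{\delta}{\ell}}$ together with the hitting sets of the other edges at its neighbour) rather than at an internal node, and that it moves directly from $X_i$ to $X_{i+1}$ without an intermediate $X_i\cup X_{i+1}$ step; also note that the game's connectivity constraint concerns the robber components $R_i,R_{i+1}$, not the cop sets $X_i,X_{i+1}$, so your ``legality'' worry is phrased the wrong way round, though the substance of your argument is unaffected.
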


\begin{proof}
We may assume $D$ to be strongly connected.
Let $\Brace{T,\delta}$ be a directed branch decomposition of width $k$ for $D$ and let $\ell\in\Fkt{V}{T}$ be an arbitrary leaf wit neighbour $t_0$ in $T$.
Then $\Thickness{\ell t_0}=1$ since $\Fkt{\delta}{\ell}$ hits all directed cycles containing $\Fkt{\delta}{\ell}$.
For every edge $e\in\Fkt{E}{T}$ let us denote by $S_e$ a minimum hitting set for the directed cycles crossing $e$, since $\Width{\Brace{T,\delta}}=k$ we know $\Abs{S_e}\leq k$ for all $e\in\Fkt{E}{T}$.

Now let us place a cop on $\Fkt{\delta}{\ell}$ as well as on every vertex in $\bigcup_{t\in N_T(t_0)\setminus\Set{\ell}}S_{t_0t}$ and denote by $X_0$ the set of all vertices occupied by cops this way.
In total we now have placed at most $3k$ cops.
For every $t\in\Fkt{N_T}{t_0}\setminus\Set{\ell}$ let $T_t$ be the subtree of $T-t_0t$ containing $t$, then no strong component of $D-X_0$ can contain vertices from $\Fkt{\delta}{T_t}$ and $\Fkt{\delta}{T_{t'}}$ simultaneously, if $t\neq t'\in\Fkt{N_T}{t_0}\setminus\Set{\ell}$.
Hence there must be a $t\in\Fkt{N_T}{t_0}\setminus\Set{\ell}$ such that the robber has chosen a strong component of $D-X_0$ contained in $\Fkt{\delta}{T_t}$.
We now derive a new cop-position $X_1\coloneqq\bigcup_{dt\in E(T)} S_{dt}$.
Since $S_{t_0t}\subseteq X_0\cap X_1$ the robber cannot leave $\Fkt{\delta}{T_t}$.
We set $t_1\coloneqq t$.

Now suppose we are in the following position:
There is an edge $t_{i-1}t_i$, the current cop position $X_i=\bigcup_{t_it'\in E(T)}S_{t_it'}$, and the robber component  $R_i$ is contained in $\Fkt{\delta}{T_{t_i}}$.
By definition of $X_i$ there cannot be distinct $t,t'\in\Fkt{N_T}{t_i}\setminus\Set{t_{i-1}}$ such that $R_i$ has vertices of both $\Fkt{\delta}{T_t}$ and $\Fkt{\delta}{T_{t'}}$, so we may assume $R_i\subseteq\Fkt{\delta}{T_t}$.
We set $t_{i+1}\coloneqq t$.
If $t_{i+1}$ is a leaf of $T$ we set $X_{i+1}\coloneqq S_{t_it_{i+1}}\cup\Set{\Fkt{\delta}{t_{i+1}}}$.
Since $S_{t_it_{i+1}}\subseteq X_i\cap X_{i+1}$ the robber cannot leave $\Fkt{\delta}{t_{i+1}}$ and thus we have captured her.
Otherwise $t_{i+1}$ is not a leaf and we set $X_{i+1}\coloneqq\bigcup_{t_{i+1}t'\in E(T)}S_{t_{i+1}t'}$.
With $X_{i+1}$ being the new cop position and $S_{t_it_{i+1}}\subseteq X_i\cap X_{i+1}$ the new robber component $R_{i+1}$ must be contained in $\Fkt{\delta}{T_{t'}}$ for some $t'\in\Fkt{N}{t_{i+1}}\setminus\Set{t_i}$.
Thus we can continue with the process.
Since $T$ is finite we will eventually catch the robber and by definition we have $\Abs{X_i}\leq 3k$ for all $i$.
\end{proof}

We can now combine \cref{lemma:dbwleqdtw,lemma:dcnleq3dbw} with \cref{thm:dhavensvsdtw} to obtain the following.

\begin{theorem}\label{thm:dbwvsdtw}
Let $D$ be a digraph, then $\dbw{D}-1\leq\dtw{D}\leq9\dbw{D}+1$.
\end{theorem}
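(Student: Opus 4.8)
The plan is to chain together the ingredients already assembled: \cref{lemma:dbwleqdtw} for the lower bound, and \cref{lemma:dcnleq3dbw} together with the haven duality (\cref{lemma:dcopsandkhavens} and \cref{thm:dhavensvsdtw}) for the upper bound. No new combinatorial work is required; the statement is essentially bookkeeping over the parameters of the cited results.

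For the lower bound I would just rearrange \cref{lemma:dbwleqdtw}: it gives $\dbw{D}\leq\dtw{D}+1$, hence $\dbw{D}-1\leq\dtw{D}$, and nothing further is needed.

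For the upper bound, set $k\coloneqq\dbw{D}$ and fix a directed branch decomposition of width $k$. First I would invoke \cref{lemma:dcnleq3dbw} to obtain $\dcn{D}\leq 3k$, i.e.\ $3k$ cops have a winning strategy in the directed cops and robber game. Next I would apply the contrapositive of \cref{lemma:dcopsandkhavens}: since the robber loses against $3k$ cops, $D$ cannot contain a haven of order $3k+1$. Finally I would feed this into \cref{thm:dhavensvsdtw} with parameter $3k+1$; as the alternative (a haven of order $3k+1$) is now excluded, we get $\dtw{D}\leq 3(3k+1)-2 = 9k+1 = 9\dbw{D}+1$, which is the claimed bound.

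I do not expect a genuine obstacle here. The only point that needs a little care is keeping the parameter shifts straight when passing from the cop number to havens and from havens to directed treewidth, and using the contrapositive of \cref{lemma:dcopsandkhavens} in the right direction: a winning strategy for $3k$ cops rules out a haven of order $3k+1$, not merely one of order $3k$, and it is exactly this $+1$ that makes the final arithmetic land on $9\dbw{D}+1$.
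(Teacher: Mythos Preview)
Your proposal is correct and follows exactly the route the paper takes: the lower bound is a rearrangement of \cref{lemma:dbwleqdtw}, and the upper bound chains \cref{lemma:dcnleq3dbw} through the haven machinery (\cref{lemma:dcopsandkhavens} and \cref{thm:dhavensvsdtw}) with precisely the parameter shifts you describe. The paper's own proof is in fact just the one-line remark that one combines these results, so your write-up is if anything more explicit about the arithmetic than the original.
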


\subsection{Of Digraphs and Hypergraphs}

Let $H$ be a hypergraph, $X\subseteq\Fkt{V}{H}$ and $F\subseteq\Fkt{E}{H}$.
Then $X$ corresponds to a set of edges of $\Dual{H}$ which we will denote by $\Dual{X}$, while $F$ corresponds to a set of vertices of $\Dual{H}$, denoted by $\Dual{F}$.

\begin{lemma}\label{lemma:hittingsets}
Let $D$ be a digraph, $X\subseteq\Fkt{V}{D}$ a non-empty set of vertices and $S\subseteq\Fkt{V}{D}$.
Then $S$ is a hitting set for $\Cut{\CycleHypergraph{D}}{X}$ if and only if $\Brace{\bigcup \Dual{X}}\cap\Brace{\bigcup\Dual{\Complement{X}}}\subseteq\bigcup\Dual{S}$.
\end{lemma}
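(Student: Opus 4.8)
The plan is to prove the equivalence by directly unwinding the definition of the dual hypergraph and recognising that both sides express one and the same combinatorial condition: every directed cycle of $D$ meeting both $X$ and $\Complement{X}$ also meets $S$.

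First I would spell out the dualisation. The vertices of $\Dual{\CycleHypergraph{D}}$ are the edges of $\CycleHypergraph{D}$, i.\@ e.\@ (the vertex sets of) the directed cycles of $D$, while the edges of $\Dual{\CycleHypergraph{D}}$ are indexed by $\Fkt{V}{D}$: to $v\in\Fkt{V}{D}$ corresponds the edge $e_v=\CondSet{C}{C~\text{a directed cycle of}~D~\text{with}~v\in\Fkt{V}{C}}$. Hence for any $Y\subseteq\Fkt{V}{D}$ the associated set of edges of $\Dual{\CycleHypergraph{D}}$ is $\Dual{Y}=\CondSet{e_v}{v\in Y}$, so that
\[
\bigcup\Dual{Y}=\bigcup_{v\in Y}e_v=\CondSet{C}{\Fkt{V}{C}\cap Y\neq\emptyset},
\]
the set of directed cycles of $D$ having at least one vertex in $Y$. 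Applying this with $Y=X$ and with $Y=\Complement{X}$ gives
\[
\Brace{\bigcup\Dual{X}}\cap\Brace{\bigcup\Dual{\Complement{X}}}=\CondSet{C}{\Fkt{V}{C}\cap X\neq\emptyset~\text{and}~\Fkt{V}{C}\cap\Complement{X}\neq\emptyset}=\Cut{\CycleHypergraph{D}}{X},
\]
where the last equality is merely the definition of the cut, now read inside $\Fkt{V}{\Dual{\CycleHypergraph{D}}}$. Applying it once more with $Y=S$ identifies $\bigcup\Dual{S}$ with the set of directed cycles of $D$ that meet $S$.

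Putting these identifications together, the inclusion $\Brace{\bigcup\Dual{X}}\cap\Brace{\bigcup\Dual{\Complement{X}}}\subseteq\bigcup\Dual{S}$ holds precisely when every directed cycle $C$ with $\Fkt{V}{C}\cap X\neq\emptyset$ and $\Fkt{V}{C}\cap\Complement{X}\neq\emptyset$ satisfies $\Fkt{V}{C}\cap S\neq\emptyset$, which is exactly the assertion that $S$ is a hitting set for $\Cut{\CycleHypergraph{D}}{X}$; this establishes the lemma. The argument is pure bookkeeping, so the only thing that needs care — and the closest thing to an obstacle — is getting the direction of the dualisation right, in particular that a \emph{set of vertices} $X$ of $\CycleHypergraph{D}$ dualises to a \emph{set of edges} $\Dual{X}$ of $\Dual{\CycleHypergraph{D}}$, together with the harmless observation that the multiplicity of hyperedges coming from distinct cycles on the same vertex set is immaterial, since being a hitting set depends only on vertex sets. (The hypothesis $X\neq\emptyset$ is in fact not needed for the equivalence.)
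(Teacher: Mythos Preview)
Your proof is correct and follows essentially the same approach as the paper: both arguments unwind the definition of the dual hypergraph to identify $\bigcup\Dual{Y}$ with the set of directed cycles meeting $Y$, after which the equivalence is immediate. Your presentation is slightly more economical in that you state the identity $\bigcup\Dual{Y}=\CondSet{C}{\Fkt{V}{C}\cap Y\neq\emptyset}$ once and apply it to $X$, $\Complement{X}$, and $S$, whereas the paper carries out the two implications by separate element-chases, but the content is the same.
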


\begin{proof}
First assume $S$ to be a hitting set for $\Cut{\CycleHypergraph{D}}{X}$ and assume there is some directed cycle $C\in\Fkt{E}{\CycleHypergraph{D}}$ with $v_C\in\Brace{\bigcup\Dual{X}}\cap\Brace{\bigcup\Dual{\Complement{X}}}$.
Then there must exist vertices $u,v\in\Fkt{V}{D}$ such that $u\in X\cap C$ and $v\in\Complement{X}\cap C$ and thus $C\in\Cut{\CycleHypergraph{D}}{X}$.
Moreover, with $S$ being a hitting set for $\Cut{\CycleHypergraph{D}}{X}$ this implies that there exists some $w\in S\cap C$ and so $v_C\in e_w\subseteq\bigcup\Dual{S}$.

For the reverse direction assume $\Brace{\bigcup \Dual{X}}\cap\Brace{\bigcup\Dual{\Complement{X}}}\subseteq\bigcup\Dual{S}$ and let $C\in\Cut{\CycleHypergraph{D}}{X}$.
Then there must exist two vertices $u,v\in C$ such that $u\in X$, $v\in\Complement{X}$ and thus $v_C\in \Brace{\bigcup \Dual{X}}\cap\Brace{\bigcup\Dual{\Complement{X}}}$.
Hence $v_C\in\bigcup\Dual{S}$ by assumption and thus there must exist a hyperedge $e_w\in \Dual{S}$ such that $v_C\in e_w$.
Hence $w\in S\cap C$ and thus $S$ must be a hitting set for $\Cut{\CycleHypergraph{D}}{X}$.
\end{proof}

\begin{theorem}\label{thm:dbwvshbw}
Let $D$ be a digraph, then $\dbw{D}=\hbw{\Dual{\CycleHypergraph{D}}}$.
\end{theorem}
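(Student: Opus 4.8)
The plan is to show that the two branch-decomposition parameters are computed over *literally the same combinatorial data*, so that a single decomposition tree $(T,\delta)$ serves both sides and has the same width in each. Observe first that the ground set of a directed branch decomposition of $D$ is $\Fkt{V}{D}$, while the ground set of a hyperbranch decomposition of $\Dual{\CycleHypergraph{D}}$ is $\Fkt{E}{\Dual{\CycleHypergraph{D}}}$; by the definition of the dual, $\Fkt{E}{\Dual{\CycleHypergraph{D}}}=\CondSet{e_v}{v\in\Fkt{V}{\CycleHypergraph{D}}}$, and since $\CycleHypergraph{D}$ has no isolated vertices exactly when $D$ is such that every vertex lies on a directed cycle — a harmless assumption, since isolated vertices of $\CycleHypergraph{D}$ contribute nothing to either parameter and vertices of $D$ on no cycle can be attached as leaves to $T$ with thickness $0$ on their pendant edge — this is in natural bijection with $\Fkt{V}{D}$ via $v\leftrightarrow e_v$. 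Hence a subcubic tree $T$ together with a bijection $\delta\colon\LineGraph{T}\to\Fkt{V}{D}$ is *the same object* as a subcubic tree with a bijection $\LineGraph{T}\to\Fkt{E}{\Dual{\CycleHypergraph{D}}}$, under the identification $\delta(\ell)=v\leftrightarrow e_v$. So it suffices to fix one such $(T,\delta)$ and prove that for every edge $t_1t_2\in\Fkt{E}{T}$ the directed-branch thickness equals the hyperbranch thickness.

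Fix $t_1t_2\in\Fkt{E}{T}$ and write $X\coloneqq\Fkt{\delta}{T_{t_1}}\subseteq\Fkt{V}{D}$, so $\Complement{X}=\Fkt{\delta}{T_{t_2}}$. On the directed side, $\Thickness{t_1t_2}$ is the minimum size of a hitting set $S\subseteq\Fkt{V}{D}$ for $\Cut{\CycleHypergraph{D}}{X}$. On the hypergraph side, translating through the bijection, $\bigcup\Fkt{\delta}{t_1}$ (the union of the dual-edges assigned to the leaf-side of $t_1$ — but here we must be careful: in the hyperbranch definition $\bigcup\Fkt{\delta}{t_1}$ means the union over the *hyperedges* indexed by leaves in $T_{t_1}$, i.e. $\bigcup_{v\in X}e_v$, and $\bigcup\Fkt{\delta}{T_{t_2}}=\bigcup_{v\in\Complement{X}}e_v$), so the hyperbranch thickness of $t_1t_2$ is the minimum size of a set $S'\subseteq\Fkt{E}{\CycleHypergraph{D}}$... wait — $S'\subseteq\Fkt{E}{\Dual{\CycleHypergraph{D}}}$, i.e. $S'$ is a set of $e_w$'s, equivalently a set $S\subseteq\Fkt{V}{D}$, with $\bigl(\bigcup_{v\in X}e_v\bigr)\cap\bigl(\bigcup_{v\in\Complement{X}}e_v\bigr)\subseteq\bigcup_{w\in S}e_w$, which in the notation of \cref{lemma:hittingsets} is exactly $\bigl(\bigcup\Dual{X}\bigr)\cap\bigl(\bigcup\Dual{\Complement{X}}\bigr)\subseteq\bigcup\Dual{S}$. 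By \cref{lemma:hittingsets} this containment holds if and only if $S$ is a hitting set for $\Cut{\CycleHypergraph{D}}{X}$. Therefore the two minimisation problems range over the same family of witnesses $S$ with the same size measure, so the minima coincide: $\Thickness{t_1t_2}$ computed as a directed-branch thickness equals $\Thickness{t_1t_2}$ computed as a hyperbranch thickness. Taking the maximum over $\Fkt{E}{T}$ gives $\Width{(T,\delta)}$ equal under both readings, and then taking the minimum over all $(T,\delta)$ yields $\dbw{D}=\hbw{\Dual{\CycleHypergraph{D}}}$.

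The routine part is the bijection bookkeeping; the one genuinely delicate point — and the step I would write out most carefully — is matching the two thickness definitions *symbol for symbol*, because the hyperbranch thickness as stated uses the asymmetric-looking expression $\bigcup\Fkt{\delta}{t_1}\cap\bigcup\Fkt{\delta}{T_{t_2}}$ (one endpoint's single bag-image versus the other subtree's image), whereas the directed-branch thickness uses the symmetric cut $\Cut{\CycleHypergraph{D}}{\Fkt{\delta}{T_{t_1}}}$. I would resolve this by noting, as the paper already does for directed branch decompositions, that the relevant hypergraph cut is symmetric: $\bigl(\bigcup_{v\in X}e_v\bigr)\cap\bigl(\bigcup_{v\in\Complement{X}}e_v\bigr)$ is unchanged under swapping $X\leftrightarrow\Complement{X}$, and a vertex $v_C$ lies in it precisely when the cycle $C$ meets both sides — i.e. precisely when $C\in\Cut{\CycleHypergraph{D}}{X}$ — so the set being "covered" in the hyperbranch definition is exactly $\Dual{(\Cut{\CycleHypergraph{D}}{X})}$ regardless of the $t_1$-versus-$T_{t_1}$ wrinkle in the notation. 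Once that identification is in place, \cref{lemma:hittingsets} closes the argument with no further work.
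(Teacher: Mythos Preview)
Your argument is correct and follows essentially the same route as the paper: identify $\Fkt{V}{D}$ with $\Fkt{E}{\Dual{\CycleHypergraph{D}}}$ via $v\leftrightarrow e_v$, observe that a single pair $(T,\delta)$ serves as a branch decomposition on both sides, and invoke \cref{lemma:hittingsets} edge by edge to conclude that the two thickness values coincide. Your extra care about vertices of $D$ lying on no directed cycle (hence giving rise to no hyperedge $e_v$ in the dual) and about the asymmetric-looking expression in the hyperbranch thickness are points the paper glosses over, but they do not change the substance of the proof.
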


\begin{proof}
Let $\Brace{T,\delta}$ be a directed branch decomposition of width $k$ for $D$.
We show that we can transform $\Brace{T,\delta}$ into  a hyperbranch decomposition for $\Dual{\CycleHypergraph{D}}$ without changing the width.
Let us define for every $\ell\in\LineGraph{T}$
\begin{align*}
	\Fkt{\Dual{\delta}}{\ell}\coloneqq e_v~\text{if and only if}~\Fkt{\delta}{\ell}=v.
\end{align*}
Now let $dt\in\Fkt{E}{T}$ be any edge and $S_{dt}$ a minimum hitting set for $\Cut{\CycleHypergraph{D}}{\Fkt{\delta}{T_d}}$.
Then we obtain from the definition of $\Dual{\delta}$ that $\Dual{\Fkt{\delta}{T_f}}=\Fkt{\Dual{\delta}}{T_f}$ for both $f\in\Set{d,t}$.
And thus, by \cref{lemma:hittingsets}, $\Brace{\bigcup \Fkt{\Dual{\delta}}{T_d}}\cap\Brace{\bigcup\Fkt{\Dual{\delta}}{T_t}}\subseteq\bigcup\Dual{S_{dt}}$.
Moreover, there cannot be an $S'\subseteq\Fkt{E}{\Dual{\CycleHypergraph{D}}}$ with the same property and $\Abs{S'}<\Abs{\Dual{S_{dt}}}$, since otherwise $\Dual{S'}$ would be a smaller hitting set for $\Cut{\CycleHypergraph{D}}{\Fkt{\delta}{T_d}}$ by \cref{lemma:hittingsets}.
Hence $\Width{\Brace{T,\Dual{\delta}}}=k$.

The proof for the reverse direction follows along similar lines and thus is omitted here.
\end{proof}

With the above theorem and the methods used to prove the equivalence between the respective branchwidth parameter and their treewidth-like counterpart we are able to obtain a directed tree decomposition for $D$ of almost optimal width from a hypertree decomposition of $\Dual{\CycleHypergraph{D}}$ of optimal width and vice versa.
The proof of the following lemma illustrates how one can obtain a generalised hypertree decomposition of bounded width for $\Dual{\CycleHypergraph{D}}$ directly from a directed tree decomposition for $D$.

While it is probably possible to obtain a directed tree decomposition for $D$ from a (generalised) hypertree decomposition for $\Dual{\CycleHypergraph{D}}$.
This step appears to be more complicated due to the requirements for the bags of a directed tree decomposition to partition the vertex set.
We believe that these complications are of purely technical nature and thus we are content with illustrating the similarities between directed treewidth and the \hwText\@ in terms of less technical approaches.

\begin{lemma}
Let $D$ be a digraph, then $\ghw{\Dual{\CycleHypergraph{D}}}\leq\dtw{D}+1$.
\end{lemma}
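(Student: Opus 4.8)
The plan is to start from an optimal directed tree decomposition $\Brace{T_D,\beta_D,\gamma_D}$ of $D$ of width $k = \dtw{D}$ and to build a generalised hypertree decomposition $\Brace{T,\beta,\gamma}$ for $\Dual{\CycleHypergraph{D}}$ of width at most $k+1$. Recall that the vertices of $\Dual{\CycleHypergraph{D}}$ are the directed cycles $C$ of $D$ (written $v_C$) and its hyperedges are the sets $e_v = \CondSet{v_C}{v\in C}$, one for each $v\in\Fkt{V}{D}$. The natural choice is to keep the same tree, $T \coloneqq T_D$, and to let the guards track the $\Gamma$-sets of the directed tree decomposition: for each $t\in\Fkt{V}{T}$ set $\Fkt{\gamma}{t}\coloneqq\CondSet{e_v}{v\in\Fkt{\Gamma_D}{t}}$, so that $\Abs{\Fkt{\gamma}{t}}\leq\Abs{\Fkt{\Gamma_D}{t}}\leq k+1$ automatically. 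The bag $\Fkt{\beta}{t}$ must then be a set of directed cycles (vertices of the dual) that is contained in $\bigcup\Fkt{\gamma}{t}=\CondSet{v_C}{C\text{ meets }\Fkt{\Gamma_D}{t}}$, and the whole assignment must form a tree decomposition of the primal graph $\underline{\Dual{\CycleHypergraph{D}}}$, which by the remark in the preliminaries equals the linegraph $\LineGraph{\CycleHypergraph{D}}$: two cycles are adjacent there iff they share a vertex of $D$.

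The key step is to define $\Fkt{\beta}{t}$ correctly and verify the three tree-decomposition axioms. The guiding principle is that a directed cycle $C$ "lives" in the part of $T$ spanned by the bags $\beta_D(t')$ that contain its vertices; since $\beta_D$ partitions $\Fkt{V}{D}$, each vertex of $C$ lies in exactly one bag, and the crucial structural fact — which I would extract from the guard condition of the directed tree decomposition — is that the set of nodes $t'$ whose $\Fkt{\Gamma_D}{t'}$ meets $C$ is \emph{connected} in $T$. Indeed, if $C$ has a vertex in $\beta_D(T_t)$ and a vertex outside $\beta_D(T_t)$, then $C$ is a closed directed walk witnessing that it must pass through $\gamma_D\Brace{d,t}$, where $d$ is the parent of $t$; hence $C$ meets $\Fkt{\Gamma_D}{t}$. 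Combined with connectivity of the $\beta_D$-support of $C$ along $T$ (each edge of $C$ forces its two endpoints' bags to be joined through the guard on the separating edge), this shows $\CondSet{t}{C\text{ meets }\Fkt{\Gamma_D}{t}}$ induces a subtree of $T$. I would then assign $v_C$ to precisely those nodes: $v_C\in\Fkt{\beta}{t}$ iff $C$ meets $\Fkt{\Gamma_D}{t}$. Axiom (iii) is then exactly the subtree claim just argued; axiom (i) holds because every cycle meets some $\Gamma_D$-set; the containment $\Fkt{\beta}{t}\subseteq\bigcup\Fkt{\gamma}{t}$ holds by construction.

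The remaining axiom (ii) — that for every edge $v_Cv_{C'}$ of the primal graph there is a node $t$ with $\Set{v_C,v_{C'}}\subseteq\Fkt{\beta}{t}$ — is where the real work lies and is the step I expect to be the main obstacle. An edge of $\LineGraph{\CycleHypergraph{D}}$ means $C$ and $C'$ share a vertex $w$ of $D$; $w$ lies in a unique bag $\beta_D(t_w)$, and $w\in\Fkt{\Gamma_D}{t_w}$, so $v_C,v_{C'}\in\Fkt{\beta}{t_w}$ — this actually goes through cleanly, so the genuine subtlety is only in pinning down the connectivity lemma above rigorously from the definition of $\gamma_D$ as a hitting set for directed walks (being careful that directed cycles through a bag are among the walks the guards must hit, and that "start and end in $\Fkt{\beta}{T_t}$ and contain a vertex outside" is the right hypothesis to apply). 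Once the connectivity lemma is in place, all five verifications are short, and since $\Abs{\Fkt{\gamma}{t}}\leq\Abs{\Fkt{\Gamma_D}{t}}\leq\dtw{D}+1$ for every $t$, the decomposition has width at most $\dtw{D}+1$, giving $\ghw{\Dual{\CycleHypergraph{D}}}\leq\dtw{D}+1$.
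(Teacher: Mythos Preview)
Your overall plan---keep the tree $T$ of an optimal directed tree decomposition and set $\gamma(t)\coloneqq\{e_v:v\in\Gamma_D(t)\}$---is exactly what the paper does, and your treatment of axioms (i), (ii) and of the width bound is fine. The problem is your choice of bags and the connectivity claim underlying axiom (iii). You put $v_C\in\beta(t)$ whenever $C$ meets $\Gamma_D(t)$ and assert that $\{t:C\cap\Gamma_D(t)\neq\emptyset\}$ is a subtree. But your argument only shows that this set \emph{contains} the minimal subtree $T_C$ spanning $\{t:C\cap\beta_D(t)\neq\emptyset\}$; it can be strictly larger and disconnected, because a guard $\gamma_D(e)$ may contain vertices whose $\beta_D$-home is far from $e$. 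Concretely, take $D$ on $\{a,b,c,d,e,f,g,h\}$ with digons $a\!\leftrightarrow\!b$, $c\!\leftrightarrow\!d$, $e\!\leftrightarrow\!f$, $g\!\leftrightarrow\!h$ and the $4$-cycle $a\!\to\!c\!\to\!e\!\to\!g\!\to\!a$. Use singleton bags along the arborescence rooted at $t_a$ shaped as the path $t_b\text{--}t_a\text{--}t_c\text{--}t_e\text{--}t_g$ with leaves $t_d,t_f,t_h$ attached at $t_c,t_e,t_g$. The guards $\gamma_D(t_a,t_c)=\{c\}$, $\gamma_D(t_c,t_e)=\{e\}$, $\gamma_D(t_e,t_g)=\{a\}$ (and the obvious singletons on the remaining edges) are valid and give width~$1$, hence an optimal decomposition. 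For the digon $C$ on $\{a,b\}$ one gets $\Gamma_D(t_c)=\{c,e\}$ disjoint from $C$, while $a\in\Gamma_D(t_e)$ and $a\in\Gamma_D(t_g)$: your support of $v_C$ is $\{t_a,t_b\}\cup\{t_e,t_g\}$, which is disconnected in $T$.

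The paper avoids this by taking \emph{smaller} bags: it sets $v_C\in\Dual{\beta}(t)$ iff $t\in T_C$, the minimal subtree spanning the $\beta_D$-support of $C$. Connectivity is then automatic, and the work shifts to the covering condition $\Dual{\beta}(t)\subseteq\bigcup\Dual{\gamma}(t)$, i.e.\ that $C$ meets $\Gamma_D(t)$ for every $t\in T_C$. That implication is exactly the one your argument \emph{does} establish (the paper splits it into three short cases according to whether $t$ sees $C$ via an incoming edge, an outgoing edge, or only through $\beta_D(t)$). So your strategy is right; only the bag definition needs to be replaced by the $T_C$-based one.
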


\begin{proof}
Let $\Brace{T,\beta,\gamma}$ be a directed tree decomposition for $D$ of width $k$.

We will start by construction a tree decomposition $\Brace{T,\Dual{\beta}}$ for $\Dual{\CycleHypergraph{D}}$ as follows:
Let $C\in\Fkt{E}{\CycleHypergraph{D}}$ be a directed cycle in $D$ and further let $B_C\subseteq\Fkt{V}{T}$ be the set of vertices $t$ of $T$ with $C\cap\Fkt{\beta}{t}\neq\emptyset$.
Then let $T_C$ be the minimum subtree of $T$ containing all vertices of $B_C$.
This provides us with a family of subtrees of $T$, one for every directed cycle in $D$.
We now define the bag function for our new tree decomposition for every $t\in\Fkt{V}{T}$
\begin{align*}
	\Fkt{\Dual{\beta}}{t}\coloneqq\Set{\left.v_c\in\Fkt{V}{\Dual{\CycleHypergraph{D}}}~\right|~t\in\Fkt{V}{T_C}}.
\end{align*}
Clearly every $T_C$ is non-empty and thus every $v_C$ occurs in at least one bag of $\Brace{T,\Dual{\beta}}$.
Moreover, for every pair $C,C'\in\Fkt{E}{\CycleHypergraph{D}}$ with $C\cap C'\neq\emptyset$ there must exist a vertex $t\in\Fkt{V}{T_C}\cap\Fkt{V}{T_{C'}}$ and thus $v_C,v_{C'}\in\Fkt{\Dual{\beta}}{t}$.
At last let $t,t'\in\Fkt{V}{T}$ such that $v_C\in\Fkt{\Dual{\beta}}{t}\cap\Fkt{\Dual{\beta}}{t'}$.
Then $t,t'\in\Fkt{V}{T_C}$ and thus the unique path with endpoints $t$ and $t'$ in $T$ must also be contained in $T_C$ implying $v_C\in\Fkt{\Dual{\beta}}{t''}$ for every vertex $t''$ on that path.
Hence $\Brace{T,\Dual{\beta}}$ is a tree decomposition for $\Dual{\CycleHypergraph{D}}$.

Next we need to define the guards.
To do this let $\Fkt{\Dual{\gamma}}{t}\coloneqq\Dual{\Fkt{\Gamma}{t}}$ for every $t\in\Fkt{V}{T}$.
Now let $v_C\in\Fkt{\Dual{\beta}}{t}$, we consider three cases:

\textbf{Case 1}: There is $\Reaches{d'}{T}{t}$, $d'\neq t$, such that $v_C\in\Fkt{\Dual{\beta}}{d'}$, then, by the definition of $\Dual{\beta}$, the directed cycle $C$ contains a vertex of $\Fkt{\beta}{T_t}$ and a vertex of $\Fkt{\beta}{T-T_t}$, where $T_t$ is the subtree of $T$ rooted at $t$.
With the existence of $d'$, $t$ is not the root of $T$ and thus there exists an edge $\Brace{d,t}\in\Fkt{E}{T}$.
Moreover, with $\Brace{T,\beta,\gamma}$ being a directed tree decomposition, $\Fkt{\gamma}{\Brace{d,t}}$ must contain a vertex of $C$.
Hence $v_C\in\bigcup\Dual{\Fkt{\Gamma}{t}}$.

\textbf{Case 2}: The first case does not hold, but there is a vertex $w'\neq t$ with $\Reaches{t}{T}{w'}$ such that $v_C\in\Fkt{\Dual{\beta}}{w'}$.
With the existence of $w'$ $t$ cannot be a leaf of $T$ and thus there exists some edge $\Brace{t,w}$ with $\Reaches{w}{T}{w'}$.
By the definition of $\Dual{\beta}$ there must be a vertex of $\Fkt{\beta}{T_w}$ and a vertex of $\Fkt{\beta}{T-T_w}$ which both are contained in $C$.
Now, the definition of directed tree decompositions implies the existence of a vertex of $C$ contained in $\Fkt{\gamma}{\Brace{t,w}}$ and thus $v_C\in\bigcup\Dual{\Fkt{\Gamma}{t}}$.

\textbf{Case 3}: Neither the first, nor the second case holds.
In this case $T_C$ only contains one vertex, which is precisely $t$.
This however implies $\Fkt{V}{C}\subseteq\Fkt{\beta}{t}\subseteq\Fkt{\Gamma}{t}$, therefore we have $v_C\in\bigcup\Dual{\Fkt{\Gamma}{t}}$.

In conclusion we obtain $\Fkt{\Dual{\beta}}{t}\subseteq\bigcup\Dual{\Fkt{\Gamma}{t}}=\bigcup\Fkt{\Dual{\gamma}}{t}$ for all $t\in\Fkt{V}{T}$ and thus $\Brace{T,\Dual{\beta},\Dual{\gamma}}$ is a generalised hypertree decomposition.
Moreover, since $\Brace{T,\beta,\gamma}$ is of width $k$, we have $\Abs{\Fkt{\Dual{\gamma}}{t}}\leq k+1$ for all $t\in\Fkt{V}{T}$ and for at least one $t$ equality holds which concludes our proof.
\end{proof}

For the next part we need to describe strong connectivity in a way that can be properly translated into the setting of the cycle hypergraph of $D$.
Let $C_0,C_1,\dots,C_{\ell}\in\CycleHypergraph{D}$, was $\Brace{C_0,C_1,\dots,C_{\ell}}$ is a \emph{chain of cycles} from $C_0$ to $C_{\ell}$ if $C_i\cap C_{i+1}\neq\emptyset$ for all $i\in\Set{0,\dots,\ell-1}$ and if $j\notin\Set{i-1,i,i+1}$, then $C_i\cap C_j=\emptyset$.

\begin{lemma}\label{lemma:strongconnect}
A digraph $D$ is strongly connected if and only if either $\Abs{\Fkt{V}{D}}=1$, or every vertex of $D$ is contained in a directed cycle and for every pair $C,C'\in\Fkt{E}{\CycleHypergraph{D}}$ there is a chain of cycles from $C$ to $C'$.
\end{lemma}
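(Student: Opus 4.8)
The plan is to prove both implications after disposing of the trivial case $\Abs{\Fkt{V}{D}}=1$: such a $D$ is strongly connected (because $\Reaches{u}{D}{u}$ always holds) and satisfies the first alternative of the statement, so both directions are immediate. Hence assume $\Abs{\Fkt{V}{D}}\geq 2$ from now on.

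\emph{Forward direction.} Assume $D$ is strongly connected. First, every vertex $v$ lies on a directed cycle: since $\Abs{\Fkt{V}{D}}\geq 2$ and $D$ is strongly connected, $v$ has an out-neighbour $w$, and there is a directed path from $w$ to $v$; appending the arc $\Brace{v,w}$ to that path gives a directed cycle through $v$. Second, to produce chains I would first build a weaker object: call a sequence $\Brace{C_0,\dots,C_\ell}$ of directed cycles with $C_i\cap C_{i+1}\neq\emptyset$ for all $i$ a \emph{walk of cycles} from $C_0$ to $C_\ell$ (so a chain of cycles is a walk of cycles with the additional property that $C_i\cap C_j=\emptyset$ whenever $\Abs{i-j}\geq 2$). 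Given directed cycles $C,C'$, pick $u\in\Fkt{V}{C}$, $u'\in\Fkt{V}{C'}$, a directed path $Q=\Brace{u=x_0,\dots,x_m=u'}$ (which exists by strong connectivity), and for each $i\in\Set{0,\dots,m-1}$ a directed cycle $C^{(i)}$ through both $x_i$ and $x_{i+1}$, obtained by appending the arc $\Brace{x_i,x_{i+1}}$ to a directed path from $x_{i+1}$ to $x_i$; then $\Brace{C,C^{(0)},\dots,C^{(m-1)},C'}$ is a walk of cycles from $C$ to $C'$, consecutive terms meeting in $x_0,x_1,\dots,x_m$ respectively (for $m=0$ this is just $\Brace{C,C'}$, meeting in $u=u'$). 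Finally, choose a walk of cycles from $C$ to $C'$ of \emph{minimum} length $\ell$; if it were not a chain, some $C_i\cap C_j\neq\emptyset$ with $j\geq i+2$, and then $\Brace{C_0,\dots,C_i,C_j,\dots,C_\ell}$ would be a strictly shorter walk of cycles from $C$ to $C'$, a contradiction. So a shortest walk of cycles is a chain, which completes this direction.

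\emph{Backward direction.} Assume every vertex of $D$ lies on a directed cycle and every pair of directed cycles is joined by a chain of cycles; I must show $\Reaches{u}{D}{v}$ and $\Reaches{v}{D}{u}$ for all $u,v\in\Fkt{V}{D}$. The key observation is that the relation ``$\Reaches{a}{D}{b}$ and $\Reaches{b}{D}{a}$'' is transitive (reachability is transitive) and holds for any two vertices lying on a common directed cycle (follow the cycle in either direction). Given $u,v$, choose a directed cycle $C$ with $u\in\Fkt{V}{C}$, a directed cycle $C'$ with $v\in\Fkt{V}{C'}$, and a chain $\Brace{C=C_0,\dots,C_\ell=C'}$. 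If $\ell=0$ then $u,v\in\Fkt{V}{C}$ and we are done. Otherwise pick $z_i\in\Fkt{V}{C_{i-1}}\cap\Fkt{V}{C_i}$ for $i\in\Set{1,\dots,\ell}$; then in the sequence $u,z_1,z_2,\dots,z_\ell,v$ every two consecutive entries lie on a common cycle ($u,z_1$ on $C_0$; $z_i,z_{i+1}$ on $C_i$; $z_\ell,v$ on $C_\ell$), so transitivity of the above relation gives $\Reaches{u}{D}{v}$ and $\Reaches{v}{D}{u}$. Hence $D$ is strongly connected.

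The only genuinely delicate point is the disjointness condition in the definition of a chain of cycles: a mere walk of cycles is easy to build, and passing to a minimum-length walk of cycles upgrades it to a chain automatically, just as a shortest path in a graph is automatically induced. The remaining verifications are immediate: an arc $\Brace{x,y}$ together with a directed path from $y$ to $x$ forms a directed cycle (its vertices are pairwise distinct, being exactly those of a path), and the relation ``mutually reachable'' is transitive and holds along any directed cycle.
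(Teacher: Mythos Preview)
Your proof is correct. The backward direction is essentially identical to the paper's. For the forward direction, however, you take a genuinely different and cleaner route: the paper fixes a shortest directed path $P$ from $\Fkt{V}{C}$ to $\Fkt{V}{C'}$, chooses among all chains starting at $C$ one whose last cycle reaches furthest along $P$, and then argues by contradiction that this chain already reaches the endpoint of $P$ in $C'$ (by extending it with a cycle through the next arc of $P$ and truncating to restore the chain property). Your approach avoids this incremental extension entirely: you first build a \emph{walk} of cycles by covering each arc of a $u$--$u'$ path with a cycle, and then observe that a shortest such walk is automatically a chain via the standard shortcutting argument. This is simpler and mirrors the familiar ``a shortest walk is a path'' trick; what the paper's argument buys is that it works directly with chains throughout and never needs the auxiliary notion of a walk of cycles, at the cost of a more delicate extremal choice.
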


\begin{proof}
The statement is trivially true if $\Abs{\Fkt{V}{D}}=1$ and thus we may assume $\Abs{\Fkt{V}{D}}\geq 2$ in the following.

We start out with the reverse direction and assume that every vertex of $D$ is contained in a directed cycle and for every pair of directed cycles $C$ and $C'$ there is a chain of cycles from $C$ to $C'$.
Let $u,v\in\Fkt{V}{D}$ be any pair of vertices in $D$ and $C_u,C_v\in\Fkt{E}{\CycleHypergraph{D}}$ such that $x\in C_x$ for $x\in\Set{u,v}$.
Then there exists a chain of cycles from $C_u$ to $C_v$ and we can find a directed path from $u$ to $v$ and from $v$ to $u$ within that chain.
Hence $D$ must be strongly connected.

So now let us assume $D$ is strongly connected.
Let $u\in\Fkt{V}{D}$ be any vertex, then $u$ must have an out-neighbour $v$ and, since $D$ is strongly connected, there is a directed path from $v$ to $u$.
Hence this path together with the edge $\Brace{u,v}$ forms a directed cycle and thus every vertex of $D$ is contained in a directed cycle.
Moreover, the argument shows that every edge of $D$ is contained in a directed cycle.
Now let $C$ and $C'$ be directed cycles of $D$ and $P$ be a shortest directed path from $\Fkt{V}{C}$ to $\Fkt{V}{C'}$ and let $\Brace{C_0,\dots,C_{\ell}}$ be a chain of cycles with $C_0=C$ such that $C_{\ell}$ contains a vertex of $P$ closest to its endpoint in $\Fkt{V}{C'}$ among all chains of cycles in $D$.
Let $v$ be the endpoint of $P$ in $\Fkt{V}{C'}$, we claim $v\in\Fkt{V}{C_{\ell}}$.
Suppose this is not the case, then let $\Brace{x,y}$ be the edge of $P$ with $x\in\Fkt{V}{C_{\ell}}$ being the vertex closest to $v$ in $\Fkt{V}{P}\cap\Fkt{C}{C_{\ell}}$.
Let $C''$ be a directed cycle containing $\Brace{x,y}$ and $i\in\Set{0,\dots,\ell}$ the smallest integer such that $\Fkt{V}{C''}\cap\Fkt{V}{C_i}\neq\emptyset$.
Then $\Fkt{V}{C''}\cap\Fkt{V}{C_j}=\emptyset$ for all $j<i$ and thus $\Brace{C_0,\dots,C_i,C''}$ is a chain of cycles in $D$.
This however is a contradiction to our choice of $\Brace{C_0,\dots,C_{\ell}}$ and thus $v\in\Fkt{V}{C_{\ell}}$.
So now let $i\in\Set{0,\dots,\ell}$ be the smallest integer such that $\Fkt{V}{C_i}\cap\Fkt{V}{C'}\neq\emptyset$ and by the arguments above this means $\Brace{C_0=C,\dots,C_i,C'}$ is a chain of cycles from $C$ to $C'$.
\end{proof}

\begin{lemma}\label{lemma:components}
Let $D$ be a digraph and $S\subseteq\Fkt{V}{D}$.
Then $K\subseteq D-S$ is a strong component of $D-S$ if and only if $\Dual{K}\coloneqq\bigcup\Dual{\Fkt{V}{K}}\setminus\bigcup\Dual{S}$ is a connected component of $\Dual{\CycleHypergraph{D}}\setminus\bigcup\Dual{S}$.
\end{lemma}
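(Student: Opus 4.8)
The plan is to first reduce to the case $S=\emptyset$, and then to identify the connected components of $\Dual{\CycleHypergraph{D}}$ with the (nontrivial) strong components of $D$ via the map that sends a directed cycle to the unique strong component containing it.

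First I would carry out the reduction. Unravelling the definitions, $\bigcup\Dual{S}=\CondSet{v_C}{\Fkt{V}{C}\cap S\neq\emptyset}$, so the vertex set of $\Dual{\CycleHypergraph{D}}\setminus\bigcup\Dual{S}$ consists of the dual vertices of the directed cycles of $D$ that avoid $S$, i.e.\ of the directed cycles of $D-S$. Chasing the hyperedges $e_v$ through the reduction — those with $v\in S$ become empty and disappear, those with $v\notin S$ restrict to their trace on the cycles of $D-S$ — and checking that no isolated vertices are created along the way, one gets $\Dual{\CycleHypergraph{D}}\setminus\bigcup\Dual{S}=\Dual{\CycleHypergraph{D-S}}$. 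Under this identification $\Dual{K}$ becomes $\CondSet{v_C}{C\text{ a directed cycle of }D-S\text{ with }\Fkt{V}{C}\cap\Fkt{V}{K}\neq\emptyset}$, so it suffices to prove the lemma for $D-S$ with $S=\emptyset$.

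The heart of the argument is then the claim that for directed cycles $C,C'$ of $D$, the dual vertices $v_C$ and $v_{C'}$ lie in the same connected component of $\Dual{\CycleHypergraph{D}}$ if and only if $C$ and $C'$ are contained in the same strong component of $D$. Since $\underline{\Dual{\CycleHypergraph{D}}}=\LineGraph{\CycleHypergraph{D}}$ and connectivity of a hypergraph is connectivity of its primal graph, $v_C$ and $v_{C'}$ lie in the same component precisely when there is a sequence of directed cycles from $C$ to $C'$ with consecutive members sharing a vertex; taking such a sequence of minimum length makes it a \emph{chain of cycles} in the sense defined just before \cref{lemma:strongconnect}. For the forward direction, every directed cycle is strongly connected, hence contained in a single strong component, and two cycles sharing a vertex are contained in the same one, so walking along the chain shows that $C$ and $C'$ share a strong component. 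For the converse, if $C$ and $C'$ both lie in a strong component $K$, then $K$ is the strongly connected induced subgraph on its vertex set and $\Abs{\Fkt{V}{K}}\geq 2$ (it contains a cycle), so \cref{lemma:strongconnect} applied to $K$ yields a chain of cycles of $K$, hence of $D$, from $C$ to $C'$.

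Granting the claim, observe that if $v\in\Fkt{V}{C}\cap\Fkt{V}{K}$ for a strong component $K$ then all of $\Fkt{V}{C}$ lies in $K$, so $\Dual{K}=\CondSet{v_C}{\Fkt{V}{C}\cap\Fkt{V}{K}\neq\emptyset}$ is exactly the set of dual vertices of the cycles contained in $K$. By the claim, these sets are precisely the connected components of $\Dual{\CycleHypergraph{D}}$: each of them is contained in a single component, and conversely every $v_C$ lies in $\Dual{K}$ for the unique strong component $K$ containing $C$. Both directions of the lemma follow, the only degenerate case being that of a one-vertex strong component lying on no directed cycle, for which $\Dual{K}=\emptyset$ and which contributes nothing to $\CycleHypergraph{D}$. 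I expect the main obstacle to be the bookkeeping in the reduction step — in particular the behaviour of hyperedges that become empty and of isolated vertices — together with making rigorous the innocuous-looking fact that a directed cycle meeting a strong component is entirely contained in it, since the whole ``fibre of a map'' picture rests on it; the chain-shortening argument and the appeal to \cref{lemma:strongconnect} should then be routine.
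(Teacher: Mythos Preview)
Your proposal is correct and rests on the same core idea as the paper's proof: chains of cycles in $D-S$ correspond to paths in $\underline{\Dual{\CycleHypergraph{D}}\setminus\bigcup\Dual{S}}$, with \cref{lemma:strongconnect} supplying the chains inside a strong component and the ``cycle meeting $K$ lies in $K$'' observation giving maximality. The one organisational difference is that you first establish $\Dual{\CycleHypergraph{D}}\setminus\bigcup\Dual{S}=\Dual{\CycleHypergraph{D-S}}$ and thereby reduce to $S=\emptyset$, whereas the paper carries $S$ through the whole argument; your reduction is a clean shortcut but not a different method.
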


\begin{proof}
Let us first assume $K$ is a strong component of $D-S$.
Then let $u,v\in\Fkt{V}{K}$ be two vertices.
By \cref{lemma:strongconnect} there exist directed cycles in $K$ containing $u$ and $v$.
Now let $C_u$ be a directed cycle containing $u$ and $C_v$ be a directed cycle containing $v$ in $K$.
Again by \cref{lemma:strongconnect} there exists a chain of cycles $\Brace{C_0,\dots,C_{\ell}}$ with $C_0=C_u$ and $C_{\ell}=C_v$.
Then $\Brace{v_{C_0},\dots,v_{C_{\ell}}}$ is a path in $\underline{\Dual{\CycleHypergraph{D}}\setminus\bigcup\Dual{S}}$ and thus $\Dual{K}$ is in fact a connected set in $\Dual{\CycleHypergraph{D}}\setminus\bigcup\Dual{S}$.
Suppose there is some $v_C\in\Fkt{V}{\Dual{\CycleHypergraph{D}}\setminus\bigcup\Dual{S}}\setminus\Dual{K}$ such that there is a path from $v_C$ to some $V_{C'}\in\Dual{K}$ in $\underline{\Dual{\CycleHypergraph{D}}\setminus\bigcup\Dual{S}}$.
Let $\Brace{v_{C_0},v_{C_1},\dots,v_{C_{\ell}}}$ be a shortest such path with $v_{C_0}=v_C$.
Then $v_{C_{\ell}}\in\Dual{K}$ and $v_{C_i}v_{C_j}\notin\Fkt{E}{\underline{\Dual{\CycleHypergraph{D}}\setminus\bigcup\Dual{S}}}$ for all $j\notin\Set{i-1,i,i+1}$.
Hence $C_j\cap C_i=\emptyset$ for all $i$ and $j$ as before and thus $\Brace{C_0,C_1,\dots,C_{\ell}}$ is a chain of cycles from $C$ to a directed cycle in $K$.
This however implies $C\subseteq K$ and thus $v_C\in\Dual{K}$ contradicting our assumption.
Hence $\Dual{K}$ must be a connected component of $\Dual{\CycleHypergraph{D}}\setminus\bigcup\Dual{S}$.

For the reverse direction let $\Dual{K}$ be a connected component of $\Dual{\CycleHypergraph{D}}\setminus\bigcup\Dual{S}$.
Let $X\coloneqq\Fkt{E}{\InducedSubgraph{\CycleHypergraph{D}}{\Dual{K}}}$, then $\Dual{X}\subseteq\Fkt{V}{D}$, we set $K\coloneqq\InducedSubgraph{D}{\Dual{X}}$.
Clearly every vertex of $K$ is contained in a directed cycle within $K$.
Moreover, as we have seen above, every shortest path in $\underline{\Dual{K}}$ corresponds to a chain of cycles in $K$ and thus $K$ is strongly connected.
With a similar argument one can show that $K$ is in fact a strong component of $D-S$ and thus we are done.
\end{proof}

There is a one to one correspondence between the strong components of a digraph $D$ and the connected components of $\Dual{\CycleHypergraph{D}}$.
We can now take this information to go back and forth between $k$-linked sets in digraphs and $k$-hyperlinked sets in the dual of their cycle hypergraph.
With this we conclude this section.

\begin{theorem}\label{thm:linkedvshyperlinked}
Let $D$ be a digraph.
There exists a $k$-linked set in $D$ if and only if there exists a $k$-hyperlinked set in $\Dual{\CycleHypergraph{D}}$.
\end{theorem}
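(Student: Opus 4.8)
The plan is to exploit the dictionary between digraphs and their dual cycle hypergraphs that was built up in Lemmas~\ref{lemma:hittingsets}, \ref{lemma:strongconnect}, and especially \ref{lemma:components}. The key translation is: a set $S\subseteq\Fkt{V}{D}$ corresponds to the set $\bigcup\Dual{S}\subseteq\Fkt{E}{\CycleHypergraph{D}}=\Fkt{V}{\Dual{\CycleHypergraph{D}}}$, and by \cref{lemma:components} the strong components of $D-S$ are in bijection with the connected components of $\Dual{\CycleHypergraph{D}}\setminus\bigcup\Dual{S}$, the component $K$ of $D-S$ matching the component $\Dual{K}=\bigcup\Dual{\Fkt{V}{K}}\setminus\bigcup\Dual{S}$. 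The candidate correspondence between a $k$-linked set $W\subseteq\Fkt{V}{D}$ and a $k$-hyperlinked set in $\Dual{\CycleHypergraph{D}}$ is $W\mapsto\Dual{W}\subseteq\Fkt{E}{\Dual{\CycleHypergraph{D}}}$, i.e.\@ the set of dual hyperedges $\{e_w : w\in W\}$.

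First I would fix a $k$-linked set $W$ in $D$ and show $\Dual{W}$ is $k$-hyperlinked in $\Dual{\CycleHypergraph{D}}$. Let $\mathcal{S}\subseteq\Fkt{E}{\Dual{\CycleHypergraph{D}}}$ with $\Abs{\mathcal{S}}<k$; each hyperedge of $\Dual{\CycleHypergraph{D}}$ is of the form $e_w$ for a unique vertex $w\in\Fkt{V}{D}$, so $\mathcal{S}$ corresponds to a set $S\subseteq\Fkt{V}{D}$ with $\Abs{S}\le\Abs{\mathcal{S}}<k\le k$, hence $\Abs{S}\le k$. Since $W$ is $k$-linked there is a strong component $K$ of $D-S$ with $\Abs{\Fkt{V}{K}\cap W}>\Abs{W}/2$. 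By \cref{lemma:components}, $\Dual{K}$ is a connected component of $\Dual{\CycleHypergraph{D}}\setminus\bigcup\mathcal{S}$. The remaining point is to check that a hyperedge $e_w\in\Dual{W}$ meets $\Fkt{V}{\Dual{K}}$ exactly when $w\in\Fkt{V}{K}$ (modulo the vertices killed by $S$): since $\Fkt{V}{K}\cap W$ is nonempty, every $w$ in it lies on a directed cycle of $K$, whose dual vertex lies in $\Dual{K}$ and in $e_w$; conversely if $e_w$ meets $\Dual{K}$ then $w$ lies on a cycle $C$ with $v_C\in\Dual{K}$, which forces $w\in\Fkt{V}{K}$ by the argument of \cref{lemma:components}. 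Thus $\Abs{\{e\in\Dual{W} : e\cap\Fkt{V}{\Dual{K}}\ne\emptyset\}}\ge\Abs{\Fkt{V}{K}\cap W}>\Abs{W}/2=\Abs{\Dual{W}}/2$, as required.

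For the converse I would start from a $k$-hyperlinked set $\mathcal{W}\subseteq\Fkt{E}{\Dual{\CycleHypergraph{D}}}$. Writing $\mathcal{W}=\Dual{W}$ for the corresponding $W\subseteq\Fkt{V}{D}$ (each dual hyperedge is $e_w$ for a unique $w$), I claim $W$ is $k$-linked in $D$: given $S\subseteq\Fkt{V}{D}$ with $\Abs{S}\le k$, pass to $\bigcup\Dual{S}$, but one must be careful since the definition of $k$-hyperlinked only allows removing $<k$ hyperedges while here $\Abs{S}$ may equal $k$. This mismatch between ``$\Abs{S}\le k$'' in the digraph definition and ``$\Abs{S}<k$'' in the hypergraph definition is the main obstacle, and I expect it to be handled exactly as the analogous off-by-one is handled elsewhere: the $k$-linked definition with $\Abs{S}\le k$ is equivalent, after reindexing, to requiring the balanced-component property for all $S$ with $\Abs{S}<k+1$, so a $k$-hyperlinked set and a $k$-linked set are matched up with the indices aligned; I would state this carefully at the outset and then the two directions become symmetric applications of \cref{lemma:components} together with the cycle-membership bookkeeping above. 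Once the indexing is pinned down, the converse is the mirror image of the forward direction: $\Dual{S}$ removes $<k$ hyperedges, yields a component $\Dual{K}$ with more than half of $\mathcal{W}$ meeting it, and \cref{lemma:components} turns $\Dual{K}$ back into a strong component $K$ of $D-S$ with $\Abs{\Fkt{V}{K}\cap W}>\Abs{W}/2$.
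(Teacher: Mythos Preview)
Your approach is exactly the paper's: take $W\mapsto\Dual{W}$ and translate between strong components of $D-S$ and connected components of $\Dual{\CycleHypergraph{D}}\setminus\bigcup\Dual{S}$ via \cref{lemma:components}. The paper argues only the forward direction in detail and then declares the reverse ``completely analogous'', so your write-up is at least as complete as the published proof.

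Your flagging of the $\Abs{S}\le k$ versus $\Abs{S}<k$ mismatch is a genuine observation that the paper simply ignores. However, your proposed fix (``reindex so they align'') does not actually close the gap: under the straightforward dictionary, a $k$-linked $W$ gives a $(k+1)$-hyperlinked $\Dual{W}$ (hence also $k$-hyperlinked, so the forward direction is fine), but a $k$-hyperlinked $\Dual{W}$ only yields a $(k-1)$-linked $W$, not a $k$-linked one. So the reverse implication with the \emph{same} $k$ is not established by the ``analogous'' argument --- neither in your proposal nor in the paper. For every use made of the theorem later (linear equivalence of parameters) this off-by-one is harmless, but if you want the literal biconditional you should either adjust one of the two definitions to match the other's strict/non-strict inequality, or weaken the statement to ``$k$-linked in $D$ iff $(k+1)$-hyperlinked in $\Dual{\CycleHypergraph{D}}$''.
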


\begin{proof}
Let $W$ be a $k$-linked set in $D$, we claim that $\Dual{W}$ is $k$-hyperlinked in $\Dual{\CycleHypergraph{D}}$.
Let $\Dual{S}\subseteq\Fkt{E}{\Dual{\CycleHypergraph{D}}}$ with $\Abs{S}<k$ and furthermore suppose for every connected component $\Dual{K}$ of $\Dual{\CycleHypergraph{D}}\setminus\bigcup \Dual{S}$ we have
\begin{align*}
\Abs{\CondSet{e_v\in\Dual{W}}{e_v\cap K\neq\emptyset}}\leq\frac{\Abs{\Dual{W}}}{2}=\frac{\Abs{W}}{2}.
\end{align*}
By \cref{lemma:components} there exists a strong component $K$ of $D-S$ such that $\Dual{K}=\bigcup\Dual{\Fkt{V}{K}}\setminus\bigcup\Dual{S}$.
Suppose there exists some strong component $K$ of $D-S$ such that $\Abs{\Fkt{V}{K}\cap W}>\frac{\Abs{W}}{2}$.
Then there must exist some $v\in\Fkt{V}{K}\cap W$ with $e_v\cap\Dual{K}=\emptyset$ since otherwise we would contradict the assumption above.
Since $K$ is strongly connected, \cref{lemma:strongconnect} implies the existence of a directed cycle $C$ containing $v$ and a chain of cycles from $C$ to any directed cycle $C'$ with $v_{C'}\in\Dual{K}$ which contradicts the maximality of $\Dual{K}$.
So we must have $\Abs{\Fkt{V}{K}\cap W}\leq\frac{\Abs{W}}{2}$.
This however is a contradiction to $W$ being a $k$-linked set and thus $\Dual{W}$ must be $k$-hyperlinked.
We omit the reverse direction as it can be proven completely analogous.
\end{proof}

\section{Digraphs of Directed Treewidth One}

In this section we will be concerned with precise description of strongly connected digraphs of directed treewidth $1$.
While many small classes of such digraphs are known, such as for example subdivisions of bidirected trees, to this day no complete description of directed treewidth one digraphs exists.
In order to achieve different characterisations we will make use of the observations from the previous section regarding the similarities between directed treewidth and \hwText.
A major advantage this approach brings with it is the fact that the class of \hwText\@ one hypergraphs is well described \cite{gottlob2002hypertree}.

Structural matching theory has developed a rich set of tools including certain decomposition techniques and minor operations that have found applications in digraph theory as well.
The tools developed to better understand so called \emph{matching covered graphs}, i.\@ e.\@ connected graphs where every edge is contained in a perfect matching, have been of great use especially in the theory of butterfly minors \cite{guenin2011packing,millani2019colouring}.
Many of the techniques we will use in order to obtain a forbidden minor characterisation of strongly connected digraphs of directed treewidth one are direct translations or at least inspired by matching theory.

A graph is called \emph{chordal} if it does not contain an induced cycle of length $\geq 4$.
A hypergraph is \emph{conformal} if a set $X\subseteq\Fkt{V}{H}$ induces a maximal clique in $\underline{H}$ if and only if $X\in\Fkt{V}{E}$.
If $H$ is conformal and $\underline{H}$ is chordal we call $H$ \emph{acyclic}.
This notion of acyclicity, sometimes known as \emph{$\alpha$-acyclicity}, has its roots in database theory \cite{fagin1983degrees,beeri1983desirability} and was one of the main concepts behind the definition of \hwText  \cite{gottlob2002hypertree}.
A hypergraph $H$ is a \emph{hypertree} if there exists a tree $T$ with $\Fkt{V}{T}=\Fkt{V}{H}$ such that every edge of $H$ induces a subtree of $T$.
Moreover, $H$ is said to have the \emph{Helly-property}, if every family $\mathcal{F}\subseteq\Fkt{E}{H}$ of pairwise intersecting edges satisfies $\bigcap\mathcal{F}\neq\emptyset$.

\begin{theorem}[\cite{fagin1983degrees,berge1984hypergraphs,gottlob2002hypertree}]\label{thm:hypertrees}
Let $H$ be a hypergraph.
The following statements are equivalent:
\begin{enumerate}
	
	\item $H$ is a hypertree,
	
	\item $H$ has the Helly-property and $\LineGraph{H}$ is chordal,
	
	\item $\Dual{H}$ is conformal and $\underline{H}$ is chordal,
	
	\item $\Dual{H}$ is $\alpha$-acyclic, and
	
	\item $\hw{\Dual{H}}=1$.
	
\end{enumerate}
\end{theorem}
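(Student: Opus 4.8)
The plan is to put the five conditions together from classical facts about $\alpha$-acyclic hypergraphs and hypertrees, isolating which link carries the genuine combinatorial content. Three ingredients will do the work: the characterisation of $\alpha$-acyclicity by the existence of a \emph{join tree} — a tree whose nodes are the edges of the hypergraph in which, for every vertex, the edges containing it induce a subtree — due to Beeri et al.\@ and Fagin \cite{beeri1983desirability,fagin1983degrees}; the identity $\LineGraph{H}=\underline{\Dual{H}}$ noted earlier; and Gavril's theorem that the intersection graphs of families of subtrees of a tree are exactly the chordal graphs, together with the classical Helly property for subtrees of a tree (a pairwise intersecting family of subtrees of a tree has a common vertex).

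First I would settle (1)$\Leftrightarrow$(4) by unwinding the definitions. A join tree of $\Dual{H}$ is a tree on $\Fkt{E}{\Dual{H}}$, which we identify with $\Fkt{V}{H}$ via $v\mapsto e_v$, such that for each vertex of $\Dual{H}$ — that is, for each $e\in\Fkt{E}{H}$ — the edges of $\Dual{H}$ containing it, namely $\CondSet{v\in\Fkt{V}{H}}{v\in e}=e$, induce a subtree; and that is, verbatim, a host tree witnessing that $H$ is a hypertree. Hence $H$ is a hypertree if and only if $\Dual{H}$ admits a join tree, and by \cite{beeri1983desirability,fagin1983degrees} this holds if and only if $\Dual{H}$ is $\alpha$-acyclic. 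The equivalence (3)$\Leftrightarrow$(4) is then the definitional unfolding of $\alpha$-acyclicity of $\Dual{H}$ — conformality of $\Dual{H}$ together with chordality of its primal graph $\underline{\Dual{H}}$, which by the identity above is $\LineGraph{H}$. Finally, (4)$\Leftrightarrow$(5) follows from the theorem of Gottlob et al.\@ \cite{gottlob2002hypertree} that a hypergraph $K$ with at least one edge satisfies $\hw{K}=1$ exactly when it is $\alpha$-acyclic (an $\alpha$-acyclic hypergraph has a hypertree decomposition guarding every bag with a single edge, and conversely width $1$ forces $\alpha$-acyclicity), applied to $K=\Dual{H}$.

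What remains, and where the real work is, is (1)$\Leftrightarrow$(2). For ``$\Rightarrow$'', fix a host tree $T$ of $H$: the sets $\CondSet{T[e]}{e\in\Fkt{E}{H}}$ are subtrees of $T$, so the Helly property of $H$ is their Helly property, and $\LineGraph{H}$, being their intersection graph, is chordal by Gavril's theorem. For ``$\Leftarrow$'' I would reconstruct a host tree: chordality of $\LineGraph{H}$ yields, again by Gavril, a tree $T'$ and a family of subtrees representing the edges of $H$ with the right pairwise intersections, and the Helly property is then what lets one assign the vertices of $H$ to nodes of $T'$ coherently, so that $T'$ becomes a tree on $\Fkt{V}{H}$ in which every $e\in\Fkt{E}{H}$ spans a subtree. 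This transport of the vertex set onto the representing tree, together with checking that the subtree property survives it, is the main obstacle; it is precisely the content of the classical hypertree/Helly theorems, and since the present statement is attributed to \cite{fagin1983degrees,berge1984hypergraphs,gottlob2002hypertree} it may reasonably be cited rather than reproved. Chaining (1)$\Leftrightarrow$(2), (1)$\Leftrightarrow$(4), (3)$\Leftrightarrow$(4), and (4)$\Leftrightarrow$(5) then closes the cycle.
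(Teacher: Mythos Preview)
The paper does not prove this theorem at all; it is stated with citations to \cite{fagin1983degrees,berge1984hypergraphs,gottlob2002hypertree} and used as a black box. So there is no ``paper's own proof'' to compare against, and your sketch is already more than the paper offers.

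Your outline is sound and hits the standard equivalences: the join-tree reformulation for (1)$\Leftrightarrow$(4), the Helly/conformal duality and $\LineGraph{H}=\underline{\Dual{H}}$ for (2)$\Leftrightarrow$(4), and Gottlob--Leone--Scarcello for (4)$\Leftrightarrow$(5). One point to flag: in handling (3)$\Leftrightarrow$(4) you silently read item (3) as ``$\Dual{H}$ is conformal and $\underline{\Dual{H}}$ is chordal'', which is indeed what makes the equivalence a definitional unfolding. The paper as written has $\underline{H}$, not $\underline{\Dual{H}}$, in (3). That version is \emph{false} in general: take $T$ a star with centre $v$ and leaves $a,b,c,d$, and let $\Fkt{E}{H}=\{\{a,v,b\},\{b,v,c\},\{c,v,d\},\{d,v,a\}\}$; then $H$ is a hypertree but $\underline{H}$ contains the induced $4$-cycle $a\text{--}b\text{--}c\text{--}d$. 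So either (3) is a typo for $\underline{\Dual{H}}=\LineGraph{H}$, in which case your argument goes through verbatim, or you should explicitly note the correction rather than pass over it.
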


We are ready to state the main result of this section.

\begin{theorem}\label{thm:dtw1}
Let $D$ be a non-trivial strongly connected digraph, the following statements are equivalent:
\begin{enumerate}
	
	\item $\dtw{D}=1$,
	
	\item $\CycleHypergraph{D}$ is a hypertree, and
	
	\item $D$ does neither contain a bicycle, nor $A_4$, see \cref{fig:A4}, as a butterfly minor.
	
\end{enumerate}
\end{theorem}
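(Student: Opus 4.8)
The plan is to prove the cycle of implications $(i) \Rightarrow (iii) \Rightarrow (ii) \Rightarrow (i)$, using the machinery built up in Section~3 to link directed treewidth to the hypertree-width of $\Dual{\CycleHypergraph{D}}$, together with \cref{thm:hypertrees} to translate between ``$\CycleHypergraph{D}$ is a hypertree'' and ``$\hw{\Dual{\CycleHypergraph{D}}} = 1$''.

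For $(i) \Rightarrow (iii)$, I would argue contrapositively: if $D$ contains a bicycle of length $\ell$ or $A_4$ as a butterfly minor, I exhibit a haven of order $3$ (equivalently, a $2$-linked set, or a winning strategy for $2$ cops' opponent in the directed cops and robber game). A bicycle $\Bidirected{C_\ell}$ is strongly connected, and any two vertices lie in a common directed cycle that avoids any single other vertex, so deleting one vertex leaves the rest strongly connected — this gives a haven of order $3$ in the bicycle. Since \dtwText\@ is monotone under butterfly minors up to a function (\cref{thm:dirgridthm} or \cite{hatzel2019cyclewidth}) — but more importantly, one can push the haven/linked set back through butterfly contractions and subgraph deletion directly — the presence of such a minor forces $\dtw{D} \geq 2$. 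I expect the cleanest route is to show directly that $\Bidirected{C_\ell}$ and $A_4$ each have $\dtw \geq 2$ and then invoke that a butterfly minor of \dtwText\@ one has \dtwText\@ one (this needs the bound from \cite{hatzel2019cyclewidth}, which only guarantees \emph{a} function; so instead I would verify that the two specific minors already produce a haven of order $3$ in $D$ itself, by lifting a directed cycle through butterfly contractions to a directed walk and hence a directed cycle in the parent digraph).

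For $(iii) \Rightarrow (ii)$, I would use \cref{thm:hypertrees}: it suffices to show that if $\CycleHypergraph{D}$ is not a hypertree, then either $\LineGraph{\CycleHypergraph{D}}$ fails to be chordal or $\CycleHypergraph{D}$ fails the Helly-property, and in each case extract a bicycle or an $A_4$ butterfly minor. An induced cycle of length $\geq 4$ in $\LineGraph{\CycleHypergraph{D}} = \underline{\Dual{\CycleHypergraph{D}}}$ corresponds to a ``chain of cycles'' $C_0, C_1, \dots, C_{k-1}, C_0$ in $D$ that closes up with no unnecessary intersections (using \cref{lemma:strongconnect}-style chain-of-cycles reasoning); contracting each $C_i$ appropriately via butterfly contractions collapses the chain to a bicycle. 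A violation of the Helly-property gives three pairwise-intersecting directed cycles with empty common intersection, and the point is to massage these — again via butterfly contractions to clean up the overlaps — into a copy of $A_4$. This is where matching-theory-flavoured reductions enter: one wants to contract the ``private'' parts of each cycle and the shared subpaths so that only the branching structure remains, and argue the resulting digraph is exactly $A_4$ (or contains it). The main obstacle will be controlling butterfly \emph{contractibility} along these subpaths: a subpath can be contracted edge-by-edge only when each edge is the unique out-edge of its tail or unique in-edge of its head, which may fail at vertices where a third cycle attaches; handling these attachment points carefully — possibly by first deleting irrelevant edges/vertices to a subgraph, then contracting — is the delicate part of the whole proof.

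For $(ii) \Rightarrow (i)$, I would combine \cref{thm:hypertrees}(v) — $\CycleHypergraph{D}$ is a hypertree iff $\hw{\Dual{\CycleHypergraph{D}}} = 1$ — with the Section~3 bounds. Concretely, $\hw{\Dual{\CycleHypergraph{D}}} = 1$ implies $\hbw{\Dual{\CycleHypergraph{D}}} \leq 1$ by \cref{thm:hbwvshw}, hence $\dbw{D} \leq 1$ by \cref{thm:dbwvshbw}, hence $\dtw{D} \leq 9\cdot 1 + 1$ by \cref{thm:dbwvsdtw} — but that only gives $\dtw{D} \leq 10$, not $1$. So for the exact bound $\dtw{D} = 1$ I instead build a directed tree decomposition of width $1$ by hand from a width-$1$ hypertree decomposition of $\Dual{\CycleHypergraph{D}}$: the arborescence $T$ and the single guard edge $v_C = \Fkt{V}{C}$ at each node translate, via \cref{lemma:hittingsets} and \cref{lemma:components}, into bags and guards for $D$ where each guard $\gamma(e)$ is a single vertex (the vertex whose dual edge $e_v$ realises the width-$1$ guard) and the bags partition $\Fkt{V}{D}$ appropriately; one then checks the directed-tree-decomposition guarding condition holds using that a width-$1$ hypertree decomposition separates $\Dual{K}$-components by single dual edges. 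Since $\dtw{D} \geq 1$ is immediate for a non-trivial strongly connected $D$ (it contains a directed cycle, hence cannot have \dtwText\@ $0$, as DAGs are exactly the \dtwText-$0$ digraphs), this gives $\dtw{D} = 1$. The bookkeeping here is the ``purely technical'' step the authors allude to after the $\ghw$ lemma, so I anticipate it being straightforward but tedious — the real content of the theorem is in the $(iii) \Leftrightarrow (ii)$ direction.
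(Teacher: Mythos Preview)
Your implication $(i)\Rightarrow(iii)$ via havens matches the paper (\cref{lemma:smallhavens}): a bicycle or $A_4$ butterfly minor in $D$ yields a \emph{closed chain of cycles} of length $\geq 3$ in $D$ itself, and such a chain supports a haven of order~$3$.

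The substantial divergence is in the remaining directions. The paper does \emph{not} attempt $(iii)\Rightarrow(ii)$. Instead it proves $(iii)\Rightarrow(i)$ directly, using a digraphic analogue of the tight cut decomposition: one defines an $\mathcal{S}$-decomposition along a maximal laminar family of order-$1$ directed separations, whose pieces (``dibraces'') are strongly $2$-connected butterfly minors of $D$. The key structural input is \cref{thm:strongminors}: \emph{every strongly $2$-connected digraph on $\geq 3$ vertices contains a bicycle or $A_4$ as a butterfly minor}. This is proved by a lengthy inductive case analysis (butterfly-dominating vertices, degree reduction, the ``small cycle property''). Given $(iii)$, all dibraces are $\Bidirected{K_2}$, and the $\mathcal{S}$-decomposition tree then \emph{is} a width-$1$ directed tree decomposition (\cref{lemma:nogreatcycletodtw1}). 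The implication $(i)\Rightarrow(ii)$ is handled separately by a short induction along tight separations (\cref{lemma:dtw1tohw1}).

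Your $(iii)\Rightarrow(ii)$ hides precisely the work of \cref{thm:strongminors} behind the phrase ``massage these via butterfly contractions''. The clean dichotomy you propose --- chordality failure in $\LineGraph{\CycleHypergraph{D}}$ yields a bicycle, Helly failure yields $A_4$ --- does not hold as stated: for instance $\Bidirected{C_3}$ already exhibits a Helly failure (its three digons pairwise intersect with empty common intersection), not merely a chordality failure, yet the target minor is a bicycle, not $A_4$. More seriously, contracting a closed chain of $\ell\geq 4$ cycles to a bicycle of length $\ell$ is exactly the step where butterfly contractibility can fail at attachment vertices, and reducing a minimal Helly-violating triple to $A_4$ requires controlling how the three cycles overlap on shared subpaths. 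You acknowledge this obstacle but offer no mechanism; the paper's mechanism is the tight cut decomposition plus \cref{thm:strongminors}, and there is no evident shortcut.

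Finally, your $(ii)\Rightarrow(i)$ is over-engineered. The paper's \cref{lemma:hypertrees} is two lines: the hypertree condition literally hands you a tree $T$ on $\Fkt{V}{D}$ in which every directed cycle induces a subtree; take $\Fkt{\beta}{t}=\{t\}$ and $\Fkt{\gamma}{(d,t)}=\{d\}$, and the guarding condition is immediate because any cycle leaving $T_t$ must contain $d$. There is no need for the dual hypergraph, \cref{lemma:hittingsets}, or the Section~3 branchwidth bounds.
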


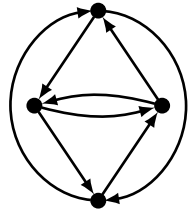
\begin{figure}[h!]
	\begin{center}
		\begin{tikzpicture}[scale=0.7]
		
		\pgfdeclarelayer{background}
		\pgfdeclarelayer{foreground}
		
		\pgfsetlayers{background,main,foreground}
		
		\tikzstyle{v:main} = [draw, circle, scale=0.5, thick,fill=black]
		\tikzstyle{v:tree} = [draw, circle, scale=0.3, thick,fill=black]
		\tikzstyle{v:border} = [draw, circle, scale=0.75, thick,minimum size=10.5mm]
		\tikzstyle{v:mainfull} = [draw, circle, scale=1, thick,fill]
		\tikzstyle{v:ghost} = [inner sep=0pt,scale=1]
		\tikzstyle{v:marked} = [circle, scale=1.2, fill=CornflowerBlue,opacity=0.3]
		
		\tikzset{>=latex} 
		\tikzstyle{e:marker} = [line width=9pt,line cap=round,opacity=0.2,color=DarkGoldenrod]
		\tikzstyle{e:colored} = [line width=1.2pt,color=BostonUniversityRed,cap=round,opacity=0.8]
		\tikzstyle{e:coloredthin} = [line width=1.1pt,opacity=0.8]
		\tikzstyle{e:coloredborder} = [line width=2pt]
		\tikzstyle{e:main} = [line width=1pt]
		\tikzstyle{e:extra} = [line width=1.3pt,color=LavenderGray]
		
		\begin{pgfonlayer}{main}
		
		\node (C) [] {};
		
		\node (C1) [v:ghost, position=180:25mm from C] {};
		
		\node (C2) [v:ghost, position=0:0mm from C] {};
		
		\node (C3) [v:ghost, position=0:25mm from C] {};

		

		
		
		\node (v1) [v:main,position=90:18mm from C2] {};
		\node (v2) [v:main,position=180:12mm from C2] {};
		\node (v3) [v:main,position=270:18mm from C2] {};
		\node (v4) [v:main,position=0:12mm from C2] {};
		
		\node (g1) [v:ghost,position=180:4.5mm from v2] {};
		\node (g2) [v:ghost,position=0:4.5mm from v4] {};
		
		\node (g3) [v:ghost,position=180:1mm from v1] {};
		\node (g4) [v:ghost,position=0:1mm from v3] {};
		
		
		
		

		

		
		
		\draw (v2) [e:main,->,bend right=15] to (v4);
		\draw (v4) [e:main,->,bend right=15] to (v2);
		
		\draw (v1) edge[line width=1pt,quick curve through={(g2)},->] (g4);
		\draw (v3) edge[line width=1pt,quick curve through={(g1)},->] (g3);
		
		\draw (v1) [e:main,->] to (v2);
		\draw (v2) [e:main,->] to (v3);
		\draw (v3) [e:main,->] to (v4);
		\draw (v4) [e:main,->] to (v1);
		
		

		
		
		\end{pgfonlayer}
		

		\begin{pgfonlayer}{background}
		
		\end{pgfonlayer}	
		
		\begin{pgfonlayer}{foreground}

		\end{pgfonlayer}
		\end{tikzpicture}
	\end{center}
	\caption{The digraph $A_4$.}
	\label{fig:A4}
\end{figure}

In particular we obtain $\dtw{D}=1$ if and only if $\hw{\Dual{\CycleHypergraph{D}}}=1$ from \cref{thm:hypertrees}.

Crucial for the proofs in this section is a digraphic analogue of the so called \emph{tight cut decomposition} \cite{lovasz1987matching} which sits at the heart of the vast majority of matching theoretic results.

Let $D$ be a digraph.
A \emph{directed separation} in $D$ is a tuple $\Separation{A}{B}{}{}$ where $A\cup B=\Fkt{V}{D}$ and there is no edge with tail in $B\setminus A$ and head in $A\setminus B$, we call $A$ and $B$ the \emph{shores} of $\Separation{A}{B}{}{}$.
The set $A\cap B$ is called the \emph{separator} of $\Separation{A}{B}{}{}$ and its \emph{order}, denoted by $\Abs{\Separation{A}{B}{}{}}$, is defined as $\Abs{A\cap B}$.
Two directed separations $\Separation{A}{B}{}{}$ and $\Separation{C}{D}{}{}$ are said to \emph{cross} if the following sets all are non-empty:
\begin{align*}
A\cap C,~B\cap D,~\Brace{A\cap D}\setminus\Brace{B\cap C},~\text{and}~\Brace{B\cap C}\setminus\Brace{A\cap D}.
\end{align*}
If $\Separation{A}{B}{}{}$ and $\Separation{C}{D}{}{}$ do not cross they are called \emph{laminar}.
A directed separation of order $1$ will be called a \emph{tight separation} in the following as a tribute to the tight cut decomposition, it is \emph{non-trivial} if $\Abs{A}\geq 2$ and $\Abs{B}\geq 2$.

Let $\Separation{A}{B}{}{}$ be a tight separation in $D$, $\Set{v}=A\cap B$, and let $X\in\Set{A,B}$.
We call $v$ the \emph{cut vertex} of $\Separation{A}{B}{}{}$.
The \emph{tight separation contraction} of $X$ in $D$ is defined as the digraph $D'$ obtained from $D$ by identifying the whole set of $X$ into the vertex $v$ and identifying multiple edges.

Observe that if $\Separation{A}{B}{}{}$ is a non-trivial tight separation in $D$, $D'$ is obtained from $D$ by contracting some $X\in\Set{A,B}$ and $\Separation{Y'}{Z'}{}{}$ is a tight separation in $D'$, then $\Separation{Y'}{Z'}{}{}$ naturally corresponds to a non-trivial tight separation $\Separation{Y}{Z}{}{}$ in $D$ that is laminar with $\Separation{A}{B}{}{}$.
Moreover, a digraph $D$ is \emph{strongly $k$-connected} if $D-S$ is strongly connected for all $S\subseteq\Fkt{V}{D}$ of size at most $k-1$.
So $D$ is strongly $2$-connected if and only if it does not have a non-trivial tight separation.
A \emph{dibrace} of $D$ is a strongly $2$-connected digraph that can be obtained from $D$ by only applying tight separation contractions.

Please note that any tight separation contraction $D'$ of a digraph $D$ is in fact a butterfly minor of $D$.
To see this suppose $D'$ can be obtained from $D$ be contracting the side $X\in\Set{A,B}$ in the non trivial tight separation contraction $\Separation{A}{B}{}{}$.
Without loss of generality let us assume $X=B$ and let $v\in A\cap B$ be the cut vertex.
Now let $T$ be the result of a depth first search in $\InducedSubgraph{D}{B}$ starting at $v$.
At last delete all edges in $\Fkt{E}{\InducedSubgraph{D}{B}}\setminus\Fkt{E}{T}$ and contract all of $T$ into one vertex.
The result is exactly $D'$.

\begin{definition}
	Let $D$ be a digraph and $\mathcal{S}$ a maximal family of pairwise laminar non-trivial tight separations in $D$.
	An $\mathcal{S}$-decomposition for $D$ is a tuple $\Brace{T,\sigma,\zeta,\mathcal{S}}$ where $T$ is a tree, $\sigma\colon\Fkt{E}{T}\rightarrow\mathcal{S}$ a bijection, and $\zeta\colon \Fkt{V}{T}\times\Fkt{E}{T}\rightarrow 2^{V(D)}$ is a partial function such that
	\begin{enumerate}
		
		\item for every $dt\in\Fkt{E}{T}$ we have $\Fkt{\sigma}{dt}\in\Set{\Separation{\Fkt{\zeta}{d,dt}}{\Fkt{\zeta}{t,dt}}{}{},\Separation{\Fkt{\zeta}{t,dt}}{\Fkt{\zeta}{d,dt}}{}{}}$, and
		
		\item for every vertex $t\in\Fkt{V}{T}$ the digraph $D_t$ obtained from $D$ by contracting every $\Fkt{\zeta}{d,dt}$ where $dt\in\Fkt{E}{T}$ is a dibrace of $D$.
		
	\end{enumerate}
For every $t\in\Fkt{V}{T}$ we define $B_t\coloneqq \bigcap_{dt\in E(T)}\Fkt{\zeta}{t,dt}$.
\end{definition}

We say that a vertex $v\in\Fkt{V}{D}$ is \emph{butterfly dominating} if every butterfly contractible edge in $D$ is incident with $v$ and if there are both incoming and outgoing butterfly contractible edges at $v$, then $v$ either as exactly one incoming, or one outgoing edge.
Note that if $D$ does not have any butterfly contractible edge every vertex is butterfly dominating.
The following theorem is a digraphic analogue of Theorem 29 from \cite{mccuaig2001brace}.

\begin{theorem}\label{thm:strongminors}
Every strongly $2$-connected digraph on at least $3$ vertices contains a bicycle or $A_4$ as a butterfly minor.
\end{theorem}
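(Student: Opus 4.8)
The plan is to argue by induction on the number of vertices, using the fact that a strongly $2$-connected digraph $D$ on at least $3$ vertices either already contains a bicycle (we are done) or, if it is bicycle-free, must contain two distinct directed cycles $C_1, C_2$ that share at least one vertex — this follows from Lemma \ref{lemma:strongconnect}, since every edge lies on a directed cycle and any two cycles are joined by a chain of cycles, so unless $D$ consists of a single directed cycle (impossible for a strongly $2$-connected digraph on $\geq 3$ vertices, which cannot be just a $C_n$), two intersecting cycles exist. First I would set up the base cases $|V(D)| = 3$: the only strongly $2$-connected digraphs on three vertices are the bidirected triangle (a bicycle) and $A_4$-like configurations once we note that $A_4$ itself has four vertices, so actually I should check small orders directly and pin down that a bidirected triangle is forced on $3$ vertices, giving the base. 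The induction then proceeds: assuming $D$ has $\geq 4$ vertices, bicycle-free, I want to locate a butterfly-contractible edge whose contraction keeps the digraph strongly $2$-connected, or else show $D$ already realizes $A_4$.

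The heart of the argument is the interaction with tight separations and dibraces. If $D$ is not strongly $2$-connected I may first pass to a dibrace via tight separation contractions, which are butterfly minors by the explicit depth-first-search construction given in the excerpt; so I reduce to the case where $D$ itself is strongly $2$-connected (a dibrace). Now I would invoke the notion of a butterfly dominating vertex: if $D$ has no butterfly-contractible edge at all, then in particular every vertex has in-degree and out-degree at least $2$ (else the unique in- or out-edge would be contractible), and I claim such a $D$ on $\geq 4$ vertices, being strongly $2$-connected and bicycle-free, must contain $A_4$ — this is where I would build $A_4$ by hand: pick a shortest cycle $C$, use $2$-connectivity to route a disjoint "chord path" across it in the reverse cyclic sense, and verify the four-vertex pattern with its two digons and two long arcs emerges after butterfly-contracting the connecting paths. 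If $D$ does have a butterfly-contractible edge $e = (u,v)$, I would contract it; the resulting digraph $D/e$ is a smaller butterfly minor of $D$, it is still strongly connected, and I must check it is still essentially $2$-connected or, if a tight separation appears, contract it away and recurse — by the remark in the excerpt, any tight separation in $D/e$ lifts to one in $D$ laminar with the structure, so the recursion is well-founded. By induction $D/e$ (after cleanup) contains a bicycle or $A_4$ as a butterfly minor, hence so does $D$, since butterfly minors compose.

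The main obstacle I anticipate is the inductive step when we contract a butterfly-contractible edge and the digraph loses strong $2$-connectivity: controlling that the tight separations that appear can always be contracted without destroying the eventual appearance of $A_4$, and in particular handling the degenerate case where every butterfly contraction leads to a digon (which would produce a bicycle immediately — actually good for us) versus the case where contractions collapse the digraph toward a single vertex. The delicate bookkeeping is to ensure the induction measure strictly decreases while the "strongly $2$-connected or reducible" invariant is maintained; I expect to need the butterfly-dominating-vertex concept precisely to corral the situation where contractible edges are all concentrated at one vertex, forcing either a digon elsewhere or the explicit $A_4$ pattern. The explicit construction of $A_4$ from a shortest cycle plus a reverse-direction connecting path in a high-degree bicycle-free strongly $2$-connected digraph is the one place where a genuinely new (if short) combinatorial argument, rather than pure bookkeeping, is required.
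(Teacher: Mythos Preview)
Your induction plan has a structural gap that prevents it from getting off the ground. In a strongly $2$-connected digraph, every vertex has in-degree and out-degree at least $2$ (if $u$ had a unique out-neighbour $v$, then $D-v$ would leave $u$ with no way out, contradicting strong $2$-connectivity). Consequently, a strongly $2$-connected digraph has \emph{no} butterfly-contractible edges at all. Your branch ``if $D$ has a butterfly-contractible edge $e$, contract it and recurse'' is therefore vacuous, and the entire weight of the argument falls on your other branch: ``build $A_4$ by hand from a shortest cycle plus a reverse chord path''. But that branch is not an argument; it is exactly the statement of the theorem restated with a vague construction hint, and you yourself flag it as the one place requiring a genuinely new combinatorial argument that you do not supply. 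So the plan, as written, is circular.

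The paper resolves this by the classic trick of \emph{loading the induction hypothesis}: it proves the stronger claim that every strongly connected digraph on at least $3$ vertices possessing a \emph{butterfly dominating} vertex contains a bicycle or $A_4$ as a butterfly minor. Strongly $2$-connected digraphs satisfy this trivially (no contractible edges, so every vertex is butterfly dominating). The point is that this weaker hypothesis survives the operations the proof actually performs, which are not butterfly contractions but \emph{edge deletions} (possibly followed by one or two targeted contractions). For instance, if some vertex has three out-neighbours, deleting one outgoing edge creates at most one vertex of low in-degree, and that vertex is butterfly dominating in the smaller digraph; similarly the proof handles a list of small induced configurations ($\vec{K_3}$, $\vec{K_3}^{+}$, $K_{2,2}^{\uparrow}$, etc.) by tailored deletions and contractions that preserve a butterfly dominating vertex. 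Once all these cases are exhausted, $D$ is forced to be $2$-in $2$-out regular with every edge in a digon, hence a bicycle. Your plan never identifies edge deletion as the reduction move, nor the butterfly-dominating invariant needed to make that move inductive; without those two ingredients the recursion has nothing to act on.
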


\begin{proof}
We will, in fact, prove a slightly stronger statement by induction.

\textbf{Claim}: Let $D$ be a strongly connected digraph on at least $3$ vertices such that there exists a butterfly dominating vertex $x\in\Fkt{V}{D}$, then $D$ has a bicycle or $A_4$ as a butterfly minor.

Suppose $\Abs{\Fkt{V}{D}}=3$, then $D$ must be isomorphic to one of the digraphs from \cref{fig:strongly3vertices}.

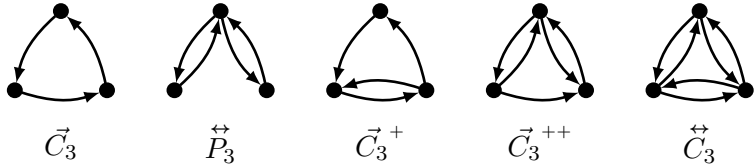
\begin{figure}[h!]
	\begin{center}
		\begin{tikzpicture}[scale=0.7]
		
		\pgfdeclarelayer{background}
		\pgfdeclarelayer{foreground}
		
		\pgfsetlayers{background,main,foreground}
		
		\tikzstyle{v:main} = [draw, circle, scale=0.5, thick,fill=black]
		\tikzstyle{v:tree} = [draw, circle, scale=0.3, thick,fill=black]
		\tikzstyle{v:border} = [draw, circle, scale=0.75, thick,minimum size=10.5mm]
		\tikzstyle{v:mainfull} = [draw, circle, scale=1, thick,fill]
		\tikzstyle{v:ghost} = [inner sep=0pt,scale=1]
		\tikzstyle{v:marked} = [circle, scale=1.2, fill=CornflowerBlue,opacity=0.3]
		
		\tikzset{>=latex} 
		\tikzstyle{e:marker} = [line width=9pt,line cap=round,opacity=0.2,color=DarkGoldenrod]
		\tikzstyle{e:colored} = [line width=1.2pt,color=BostonUniversityRed,cap=round,opacity=0.8]
		\tikzstyle{e:coloredthin} = [line width=1.1pt,opacity=0.8]
		\tikzstyle{e:coloredborder} = [line width=2pt]
		\tikzstyle{e:main} = [line width=1pt]
		\tikzstyle{e:extra} = [line width=1.3pt,color=LavenderGray]
		
		\begin{pgfonlayer}{main}
		
		\node (C) [] {};
		
		\node (C1) [v:ghost, position=180:60mm from C] {};
		\node (L1) [v:ghost, position=270:15mm from C1,align=center] {$\vec{C_3}$};
		
		\node (C2) [v:ghost, position=180:30mm from C] {};
				\node (L2) [v:ghost, position=270:15mm from C2,align=center] {$\Bidirected{P_3}$};
		
		\node (C3) [v:ghost, position=0:0mm from C] {};
				\node (L3) [v:ghost, position=270:15mm from C3,align=center] {$\vec{C_3}^{\text{\tiny +}}$};
		
		\node (C4) [v:ghost, position=0:30mm from C] {};
				\node (L4) [v:ghost, position=270:15mm from C4,align=center] {$\vec{C_3}^{\text{\tiny ++}}$};
		
		\node (C5) [v:ghost, position=0:60mm from C] {};
		\node (L5) [v:ghost, position=270:15mm from C5,align=center] {$\Bidirected{C_3}$};

		

		
		
		\node (v1-1) [v:main,position=90:10mm from C1] {};
		\node (v2-1) [v:main,position=210:10mm from C1] {};
		\node (v3-1) [v:main,position=330:10mm from C1] {};

		\node (v1-2) [v:main,position=90:10mm from C2] {};
		\node (v2-2) [v:main,position=210:10mm from C2] {};
		\node (v3-2) [v:main,position=330:10mm from C2] {};
		
		\node (v1-3) [v:main,position=90:10mm from C3] {};
		\node (v2-3) [v:main,position=210:10mm from C3] {};
		\node (v3-3) [v:main,position=330:10mm from C3] {};
		
		\node (v1-4) [v:main,position=90:10mm from C4] {};
		\node (v2-4) [v:main,position=210:10mm from C4] {};
		\node (v3-4) [v:main,position=330:10mm from C4] {};
		
		\node (v1-5) [v:main,position=90:10mm from C5] {};
		\node (v2-5) [v:main,position=210:10mm from C5] {};
		\node (v3-5) [v:main,position=330:10mm from C5] {};

		
		
		

		

		
		
		\draw (v1-1) [e:main,->,bend right=18] to (v2-1);
		\draw (v2-1) [e:main,->,bend right=18] to (v3-1);
		\draw (v3-1) [e:main,->,bend right=18] to (v1-1);

		\draw (v1-2) [e:main,->,bend right=18] to (v2-2);
		\draw (v2-2) [e:main,->,bend right=18] to (v1-2);
		\draw (v1-2) [e:main,->,bend right=18] to (v3-2);
		\draw (v3-2) [e:main,->,bend right=18] to (v1-2);
		
		\draw (v1-3) [e:main,->,bend right=18] to (v2-3);
		\draw (v2-3) [e:main,->,bend right=18] to (v3-3);
		\draw (v3-3) [e:main,->,bend right=18] to (v1-3);
		\draw (v3-3) [e:main,->,bend right=18] to (v2-3);
		
		\draw (v1-4) [e:main,->,bend right=18] to (v2-4);
		\draw (v2-4) [e:main,->,bend right=18] to (v3-4);
		\draw (v3-4) [e:main,->,bend right=18] to (v1-4);
		\draw (v2-4) [e:main,->,bend right=18] to (v1-4);
		\draw (v1-4) [e:main,->,bend right=18] to (v3-4);
		
		\draw (v1-5) [e:main,->,bend right=18] to (v2-5);
		\draw (v2-5) [e:main,->,bend right=18] to (v3-5);
		\draw (v3-5) [e:main,->,bend right=18] to (v1-5);
		\draw (v2-5) [e:main,->,bend right=18] to (v1-5);
		\draw (v1-5) [e:main,->,bend right=18] to (v3-5);
		\draw (v3-5) [e:main,->,bend right=18] to (v2-5);

		

		
		
		\end{pgfonlayer}
		

		\begin{pgfonlayer}{background}
		
		\end{pgfonlayer}	
		
		\begin{pgfonlayer}{foreground}

		\end{pgfonlayer}
		\end{tikzpicture}
	\end{center}
	\caption{The strongly connected digraphs on $3$ vertices.}
	\label{fig:strongly3vertices}
\end{figure}

None of these graphs except for $\Bidirected{C_3}$ has a butterfly dominating vertex and thus we are done with the base of the induction.
So from now on we may assume $\Abs{\Fkt{V}{D}}\geq 4$ and let $x\in\Fkt{V}{D}$ be a butterfly dominating vertex of $D$.
We will consider several cases.

\textbf{Case 1}: $D$ is not strongly $2$-connected.

Under this assumption there must exist a vertex $v\in\Fkt{V}{D}$ such that $D-v$ is not strongly connected.
Let $K$ be the strong component of $D-v$ containing $x$ if $v\neq x$, otherwise let $K$ be any strong component of $D-v$.
If there exists a strong component $K'\neq K$ of $D-v$ such that there is a directed path from $K'$ to $K$ in $D-v$ let $X$ be the union of all strong components, including $K$ itself, that can be reached by a directed path starting in $K$ and let $Y\coloneqq\Fkt{V}{D-v}\setminus X$.
In case no such component exists let $X\coloneqq\Fkt{V}{K}$ and $Y$ be defined as above.
In either case one of $\Separation{Y\cup\Set{v}}{X\cup\Set{v}}{}{}$ and $\Separation{X\cup\Set{v}}{Y\cup\Set{v}}{}{}$ is a non-trivial tight separation.
Thus we may contract the set $X$ into the vertex $v$, let $D'$ be the result of this operation.
We claim that $v$ is butterfly dominating in $D'$.
Suppose there is a butterfly contractible edge in $D'$ not incident with $v$, then this edge must have been butterfly contractible in $D$ as well, contradicting the choice of $v$.
Furthermore, suppose $v$ has an incoming and an outgoing butterfly contractible edge, but $\Abs{\InN{D'}{v}}\geq 2$ and $\Abs{\OutN{D'}{v}}\geq 2$.
Then $v$ is neither the only tail, nor the only head of any of its incident butterfly contractible edges in $D'$.
This however implies the existence of some butterfly contractible edge $e'$ that corresponds to an edge $e$ in $D$ that must still be butterfly contractible, but is not incident to $v$.
Thus $v$ cannot be butterfly dominating in $D$.
Hence $v$ must be butterfly dominating in $D'$ and thus we may apply the induction hypothesis to $D'$.
With $D'$ being a butterfly minor of $D$ the assertion follows.

So from now on we may assume $D$ to be strongly $2$ connected.

\textbf{Case 2}: There exists a vertex $u\in\Fkt{V}{D}$ such that $u$ has at least three out-neighbours or at least three in-neighbours.

Without loss of generality let us assume that $u$ has at least three out-neighbours and let $\Brace{u,w}\in\Fkt{E}{D}$ be any edge incident with $u$.
Since $D$ is strongly $2$-connected, $D'\coloneqq D-\Brace{u,w}$ is strongly connected.
Indeed $\Abs{\OutN{D'}{x}}\geq 2$ for all $x\in\Fkt{V}{D'}$ and $\Abs{\InN{D'}{x}}\geq 2$ for all $x\in\Fkt{V}{D'}\setminus\Set{w}$, hence $w$ is butterfly dominating in $D'$ and we may apply the induction hypothesis.
With $D'\subseteq D$ this immediately yields the assertion.

Thus we may further assume $\Abs{\InN{D}{x}}=\Abs{\OutN{D}{x}}=2$ for all $x\in\Fkt{V}{D}$.
We say that a digraph $D$ has the \emph{small cycle property} if for every edge $e\in\Fkt{E}{D}$ there exists an induced subgraph of $D$ isomorphic to $\Bidirected{K_2}$, $\vec{K_3}$, $\vec{K_3}^{\text{\tiny +}}$, $\vec{K_3}^{\operatorname{o}}$, $\vec{K_3}^{\operatorname{i}}$, $\vec{K_3}^{\text{\tiny ++}}$,or $K_{2,2}^{\uparrow}$ containing $e$.
See \cref{fig:smallcycles} for an illustration.

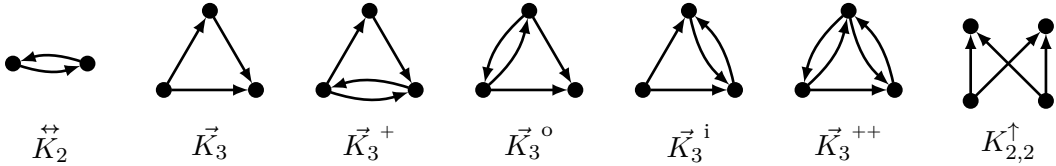
\begin{figure}[h!]
	\begin{center}
		\begin{tikzpicture}[scale=0.7]
		
		\pgfdeclarelayer{background}
		\pgfdeclarelayer{foreground}
		
		\pgfsetlayers{background,main,foreground}
		
		\tikzstyle{v:main} = [draw, circle, scale=0.5, thick,fill=black]
		\tikzstyle{v:tree} = [draw, circle, scale=0.3, thick,fill=black]
		\tikzstyle{v:border} = [draw, circle, scale=0.75, thick,minimum size=10.5mm]
		\tikzstyle{v:mainfull} = [draw, circle, scale=1, thick,fill]
		\tikzstyle{v:ghost} = [inner sep=0pt,scale=1]
		\tikzstyle{v:marked} = [circle, scale=1.2, fill=CornflowerBlue,opacity=0.3]
		
		\tikzset{>=latex} 
		\tikzstyle{e:marker} = [line width=9pt,line cap=round,opacity=0.2,color=DarkGoldenrod]
		\tikzstyle{e:colored} = [line width=1.2pt,color=BostonUniversityRed,cap=round,opacity=0.8]
		\tikzstyle{e:coloredthin} = [line width=1.1pt,opacity=0.8]
		\tikzstyle{e:coloredborder} = [line width=2pt]
		\tikzstyle{e:main} = [line width=1pt]
		\tikzstyle{e:extra} = [line width=1.3pt,color=LavenderGray]
		
		\begin{pgfonlayer}{main}
		
		\node (C) [] {};
		
		\node (C1) [v:ghost, position=180:90mm from C] {};
		\node (L1) [v:ghost, position=270:15mm from C1,align=center] {$\Bidirected{K_2}$};
		
		\node (C2) [v:ghost, position=180:60mm from C] {};
		\node (L2) [v:ghost, position=270:15mm from C2,align=center] {$\vec{K_3}$};
		
		\node (C3) [v:ghost, position=180:30mm from C] {};
		\node (L3) [v:ghost, position=270:15mm from C3,align=center] {$\vec{K_3}^{\text{\tiny +}}$};
		
		\node (C4) [v:ghost, position=0:0mm from C] {};
		\node (L4) [v:ghost, position=270:15mm from C4,align=center] {$\vec{K_3}^{\operatorname{o}}$};
		
		\node (C5) [v:ghost, position=0:30mm from C] {};
		\node (L5) [v:ghost, position=270:15mm from C5,align=center] {$\vec{K_3}^{\operatorname{i}}$};
		
		\node (C6) [v:ghost, position=0:60mm from C] {};
		\node (L6) [v:ghost, position=270:15mm from C6,align=center] {$\vec{K_3}^{\text{\tiny ++}}$};
		
		\node (C7) [v:ghost, position=0:90mm from C] {};
		\node (L7) [v:ghost, position=270:15mm from C7,align=center] {$K_{2,2}^{\uparrow}$};

		

		
		
		\node (v1-1) [v:main,position=180:7mm from C1] {};
		\node (v2-1) [v:main,position=0:7mm from C1] {};
		
		\node (v1-2) [v:main,position=90:10mm from C2] {};
		\node (v2-2) [v:main,position=210:10mm from C2] {};
		\node (v3-2) [v:main,position=330:10mm from C2] {};
		
		\node (v1-3) [v:main,position=90:10mm from C3] {};
		\node (v2-3) [v:main,position=210:10mm from C3] {};
		\node (v3-3) [v:main,position=330:10mm from C3] {};
		
		\node (v1-4) [v:main,position=90:10mm from C4] {};
		\node (v2-4) [v:main,position=210:10mm from C4] {};
		\node (v3-4) [v:main,position=330:10mm from C4] {};
		
		\node (v1-5) [v:main,position=90:10mm from C5] {};
		\node (v2-5) [v:main,position=210:10mm from C5] {};
		\node (v3-5) [v:main,position=330:10mm from C5] {};
		
		\node (v1-6) [v:main,position=90:10mm from C6] {};
		\node (v2-6) [v:main,position=210:10mm from C6] {};
		\node (v3-6) [v:main,position=330:10mm from C6] {};
		
		\node (v1-7) [v:main,position=135:10mm from C7] {};
		\node (v2-7) [v:main,position=225:10mm from C7] {};
		\node (v3-7) [v:main,position=315:10mm from C7] {};
		\node (v4-7) [v:main,position=45:10mm from C7] {};

		
		
		

		

		
		
		\draw (v1-1) [e:main,->,bend right=18] to (v2-1);
		\draw (v2-1) [e:main,->,bend right=18] to (v1-1);
		
		\draw (v2-2) [e:main,->] to (v1-2);
		\draw (v1-2) [e:main,->] to (v3-2);
		\draw (v2-2) [e:main,->] to (v3-2);
		
		\draw (v2-3) [e:main,->] to (v1-3);
		\draw (v1-3) [e:main,->] to (v3-3);
		\draw (v3-3) [e:main,->,bend right=18] to (v2-3);
		\draw (v2-3) [e:main,->,bend right=18] to (v3-3);
		
		\draw (v1-4) [e:main,->,bend right=18] to (v2-4);
		\draw (v2-4) [e:main,->,bend right=18] to (v1-4);
		\draw (v1-4) [e:main,->] to (v3-4);
		\draw (v2-4) [e:main,->] to (v3-4);
		
		\draw (v2-5) [e:main,->] to (v1-5);
		\draw (v2-5) [e:main,->] to (v3-5);
		\draw (v1-5) [e:main,->,bend right=18] to (v3-5);
		\draw (v3-5) [e:main,->,bend right=18] to (v1-5);
		
		\draw (v2-6) [e:main,->,bend right=18] to (v1-6);
		\draw (v1-6) [e:main,->,bend right=18] to (v2-6);
		\draw (v2-6) [e:main,->] to (v3-6);
		\draw (v1-6) [e:main,->,bend right=18] to (v3-6);
		\draw (v3-6) [e:main,->,bend right=18] to (v1-6);
		
		\draw (v2-7) [e:main,->] to (v1-7);
		\draw (v2-7) [e:main,->] to (v4-7);
		\draw (v3-7) [e:main,->] to (v1-7);
		\draw (v3-7) [e:main,->] to (v4-7);

		

		
		
		\end{pgfonlayer}
		

		\begin{pgfonlayer}{background}
		
		\end{pgfonlayer}	
		
		\begin{pgfonlayer}{foreground}

		\end{pgfonlayer}
		\end{tikzpicture}
	\end{center}
	\caption{The induced subgraphs for the small cycle property.}
	\label{fig:smallcycles}
\end{figure}

\textbf{Case 3}: $D$ does not have the small cycle property.

Then there must exist an edge $\Brace{x,y}\in\Fkt{E}{D}$ for which no induced subgraph of $D$ containing it belongs to one of the six from \cref{fig:smallcycles}.
By assumption there exists a unique out-neighbour $z\in\OutN{D}{x}$ of $x$ that is distinct from $y$.
In $D-\Brace{x,z}$ an edge is butterfly contractible if and only if it is an incoming edge of $z$ or the edge $\Brace{x,y}$.
Let now be $D'$ the digraph obtained from $D-\Brace{x,z}$ by contracting $\Brace{x,y}$.
In $D'$ every vertex except $z$ has out- and in-degree at least two and thus $z$ is butterfly dominating.
Since $D'$ must also be strongly connected the assertion follows immediately from our induction hypothesis.

So $D$ must, in addition to its other properties, also have the small cycle property.

\textbf{Case 4}: $D$ contains an induced subgraph isomorphic to $\vec{K_3}$.

Let $x,y,z$ be the vertices and $\Brace{x,y}$, $\Brace{y,z}$, $\Brace{x,z}$ be the edges of a $\vec{K_3}$.
Then $D-\Brace{x,z}$ is strongly connected and contains precisely two butterfly contractible edges, namely $\Brace{x,y}$ and $\Brace{y,z}$.
Now let $D'$ be the digraph obtained from $D-\Brace{x,z}$ by contracting the edge $\Brace{y,z}$ into the vertex $y$.
So in $D'$ every vertex has in-degree at least two and every vertex except $x$ also has out-degree at least two.
Hence $x$ is butterfly dominating in $D'$ and we may apply the induction hypothesis to close this case.

\textbf{Case 5}: $D$ contains an induced subgraph isomorphic to $\vec{K_3}^{\text{\tiny +}}$.

Let $x,y,z$ be the vertices and $\Brace{x,y}$, $\Brace{y,z}$, $\Brace{x,z}$, $\Brace{z,x}$ be the edges of a $\vec{K_3}^{\text{\tiny +}}$.

First suppose $\Abs{\Fkt{V}{D}}=4$, then , since $\vec{K_3}^{\text{\tiny +}}$ is induced, there must exist a fourth vertex $w$ together with the edges $\Brace{w,x}$ and $\Brace{z,w}$.
Moreover, by our degree constraints $w$ and $y$ must be contained in a digon together.
Then $D$ is isomorphic to $A_4$ and we are done.

So we may assume $\Abs{\Fkt{V}{D}}\geq 5$.
Let us consider the strongly connected digraph $D-\Brace{x,z}-\Brace{z,x}$.
Here the butterfly contractible edges are exactly the four edges incident with $x$ or $z$.
So in particular the edges $\Brace{x,y}$ and $\Brace{y,z}$ are butterfly contractible.
Now let $D'$ be the digraph obtained from $D-\Brace{x,z}-\Brace{z,x}$ by contracting both $\Brace{x,y}$ and $\Brace{y,z}$ into the vertex $y$.
Now $y$ has out- and in-degree exactly $2$ in $D'$ and so do the former neighbours of $x$ and $z$, thus there is no more butterfly contractible edge in the graph and we can close this case by applying our induction hypothesis.

\textbf{Case 6}: $D$ contains an induced subgraph isomorphic to $\vec{K_3}^{\operatorname{o}}$ or $\vec{K_3}^{\operatorname{i}}$.

We only consider one of these cases since the other one follows analogously.
So let us assume $\vec{K_3}^{\operatorname{o}}$ is an induced subgraph of $D$ and $z$ is the vertex of $\vec{K_3}^{\operatorname{o}}$ not contained in a digon.
Let $x,y$ be the two other vertices.
Observe that every path staring in $u\in\Set{x,y}$ and ending in a vertex of $\Fkt{V}{D}\setminus\Set{x,y,z}$ must contain $z$.
Hence $D-z$ is not strongly connected which is a contradiction.
 
 \textbf{Case 7}: $D$ contains an induced subgraph isomorphic to $\vec{K_3}^{\text{\tiny ++}}$.
 
 This case is similar to the previous one.
 Let $\Brace{x,y}$ be the unique edge of $\vec{K_3}^{\text{\tiny ++}}$ not contained in a digon, then every path from $x$ to any vertex of $\Fkt{V}{D}-\Fkt{V}{\vec{K_3}^{\text{\tiny ++}}}$ must contain $y$ and thus $D-y$ is not strongly connected.
 
\textbf{Case 8}: $D$ contains an induced subgraph isomorphic to $K_{2,2}^{\uparrow}$.

Let $w,x,y,z\in\Fkt{V}{D}$ together with the edges $\Brace{w,y}$, $\Brace{w,z}$, $\Brace{x,y}$, $\Brace{x,z}$ form an induced $K_{2,2}^{\uparrow}$ in $D$.
Consider $D-\Brace{w,y}$ which contains precisely two butterfly contractible edges, namely $\Brace{w,z}$ and $\Brace{x,y}$.
Similar to the previous cases let now be $D'$ the digraph obtained from $D-\Brace{w,y}$ by contracting $\Brace{w,z}$ and observe that $\Brace{x,y}$ is the sole butterfly contractible edge of $D'$.
Hence we may again apply out induction hypothesis and are done with the current case.

To summarise, $D$ is strongly $2$-connected, has maximum in- and out-degree equal to two, has the small cycle property and cannot contain an induced `small cycle' except for $\Bidirected{K_2}$.
This means that every every edge of $D$ is contained in a digon, hence $D$ is the biorientation of some undirected graph $G$.
Moreover, since every vertex of $D$ has out- and in-degree exactly two, $G$ must have $\Fkt{\delta}{G}=\Fkt{\Delta}{G}=2$ and with $D$ being strongly $2$-connected, $D$ must be $2$-connected. 
Hence $G$ must be isomorphic to some $C_{\ell}$, $\ell\geq 3$ and thus $D$ is a bicycle.
\end{proof}

The remainder of the section is dedicated to the proof of \cref{thm:dtw1}.
The proof is divided into several smaller lemmas.

Let $C_1,\dots, C_{\ell}$ be a collection of pairwise distinct directed cycles in $D$.
We call $\Brace{C_1,\dots,C_{\ell}}$ a \emph{closed chain of cycles}, if $\Brace{C_1,\dots,C_{\ell-1}}$ and $\Brace{C_2,\dots,C_{\ell}}$ are chains of cycles and $\Fkt{V}{C_1}\cap\Fkt{V}{C_{\ell}}\neq\emptyset$.

\begin{lemma}\label{lemma:smallhavens}
If a digraph $D$ contains a bicycle or $A_4$ as a butterfly minor, it contains a haven of order $3$.
\end{lemma}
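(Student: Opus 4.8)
The plan is to first verify that a bicycle and $A_4$ each carry a haven of order $3$, and then to show that this property is inherited by every digraph having one of them as a butterfly minor. For the base cases, note that a bicycle $\Bidirected{C_{\ell}}$ (necessarily $\ell\geq 3$) and the digraph $A_4$ are both strongly $2$-connected and have at least three vertices. For any digraph $D'$ with these two properties, deleting at most one vertex leaves a strongly connected digraph, so $\Fkt{V}{D'}\setminus X$ is the unique strong component of $D'-X$ when $\Abs{X}\leq 1$, while deleting two vertices leaves a non-empty digraph, so $D'-X$ has at least one strong component when $\Abs{X}=2$. Hence setting $\Fkt{h}{X}:=\Fkt{V}{D'}\setminus X$ for $\Abs{X}\leq 1$ and letting $\Fkt{h}{X}$ be any strong component of $D'-X$ for $\Abs{X}=2$ defines a haven of order $3$: the only non-trivial containment required is $\Fkt{h}{X}\subseteq\Fkt{h}{Y}$ for $\Abs{Y}\leq 1<\Abs{X}$, which holds since $\Fkt{h}{X}\subseteq\Fkt{V}{D'}\setminus X\subseteq\Fkt{V}{D'}\setminus Y=\Fkt{h}{Y}$.

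For the transfer along butterfly minors, I would write $D'$ as a digraph obtained from a subdigraph $D_0\subseteq D$ by a sequence of butterfly contractions, and reduce to two statements: (a) if a subdigraph $H$ of $D$ has a haven of order $3$, then so does $D$; and (b) if $D_1$ arises from a digraph $D_0$ by contracting a single butterfly-contractible edge and $D_1$ has a haven of order $3$, then so does $D_0$. Applying (b) repeatedly peels off the contractions, reducing $D'$ to $D_0$, and then (a) lifts the haven from $D_0$ to $D$. Statement (a) is easy: given a haven $h_H$ of $H$, put $\Fkt{h}{X}:=$ the strong component of $D-X$ containing $\Fkt{h_H}{X\cap\Fkt{V}{H}}$; this is well defined because $\Fkt{h_H}{X\cap\Fkt{V}{H}}$ is a strongly connected subgraph of $D-X$ and hence lies in a unique strong component, and monotonicity holds because the sets $\Fkt{h_H}{X\cap\Fkt{V}{H}}$ are already nested along $Y\subseteq X$ and a strong component of $D-X$ meeting a strong component of $D-Y$ is contained in it.

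The heart of the argument is (b). Let $D_1=D_0/e$ for a butterfly-contractible edge $e=\Brace{u,v}$; by reversing all edges of $D_0$ (which preserves strong components, hence havens) we may assume $\Brace{u,v}$ is the only out-going edge of $u$. Let $w$ be the vertex of $D_1$ obtained by identifying $u$ and $v$, and let $\pi$ collapse $u$ and $v$ to $w$ and fix all other vertices. Given a haven $h_1$ of $D_1$ and a set $X$ with $\Abs{X}\leq 2$, I would set $X':=\pi(X)$ and define $\Fkt{h}{X}$ to be the strong component of $D_0-X$ that corresponds to $\Fkt{h_1}{X'}$ under $\pi$: if $w\notin\Fkt{h_1}{X'}$ then the edges inside $\Fkt{h_1}{X'}$ are untouched by the un-contraction, so $\Fkt{h_1}{X'}$ is again a strong component of $D_0-X$ and we take it; and if $w\in\Fkt{h_1}{X'}$ (which forces $u,v\notin X$) we take the strong component $K$ of $D_0-X$ containing $v$, which one checks satisfies $\pi(K)=\Fkt{h_1}{X'}$. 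The reason this works is the out-degree-one hypothesis on $u$: every directed walk through $w$ in $D_1-X'$ lifts to a directed walk through $v$ in $D_0-X$ (entering at $u$ or $v$ and, if necessary, using the edge $\Brace{u,v}$), and conversely every directed walk through $v$ in $D_0-X$ projects to one through $w$, so strong connectivity transfers in both directions needed; since $\pi(Y)\subseteq\pi(X)$ whenever $Y\subseteq X$, the nestedness of $h_1$ then carries over to $h$.

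The main obstacle is exactly step (b). Directed treewidth, and hence the existence of havens of large order, is not monotone under butterfly contractions in general, so it is essential here that order $3$ is small enough: the delicate cases are those in which a deleted vertex of $X$ lands on $u$ (destroying all directed cycles through the in-neighbours of $u$) or on $v$, and one must verify, going through every way a subset $Y$ of $X$ of size at most two can intersect $\{u,v,w\}$, that $v$ can always take over the role of $w$ and that the chosen components remain nested. The remaining bookkeeping — that $\Fkt{h}{X}$ is a single strong component and that the two sub-cases of the definition are mutually consistent for monotonicity — is routine and would be deferred to the end of the proof.
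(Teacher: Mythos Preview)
Your approach is correct in outline but genuinely different from the paper's. The paper never lifts havens along butterfly-minor operations; instead it observes that a bicycle or $A_4$ butterfly minor forces a \emph{closed chain of cycles} of length $\geq 3$ inside $D$ itself (each digon of the bicycle, respectively each of the three highlighted cycles of $A_4$, lifts to a directed cycle in $D$ through the branch sets of the minor model), and then constructs the haven directly from that chain by hand. Your route---verify havens of order $3$ in the two base digraphs and then pull them back through subgraph-inclusion and single butterfly un-contractions---is more conceptual and in fact proves more: nothing in step~(b) uses that the order is $3$, so you are really showing that the maximal haven order is monotone under butterfly minors. This is \emph{not} in conflict with Adler's non-monotonicity of directed treewidth, since $\dtw{}$ and haven order are only equivalent up to a factor of~$3$; so your worry that ``order $3$ is small enough'' is essential is misplaced, and you should drop that caveat.

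One point does need fixing. In the case $w\notin h_1(X')$ you assert that $h_1(X')$ is again a \emph{strong component} of $D_0-X$, but it is in general only a strongly connected subset: for instance if $u\in X$ and $v\notin X$ then $v$ survives in $D_0-X$ and may enlarge the component containing $h_1(X')$. The correct definition is to let $h(X)$ be the strong component of $D_0-X$ containing the nonempty set $L(X')\coloneqq\bigl(h_1(X')\setminus\{w\}\bigr)\cup\bigl(\{v\}\text{ if }w\in h_1(X')\bigr)$. One checks that $L(X')$ lies in a single strong component of $D_0-X$ (using, in the $w\in h_1(X')$ case, exactly the path-lifting you describe), and that $Y\subseteq X$ implies $L(X')\subseteq L(Y')$; since $h(X)$ is strongly connected in $D_0-Y$ and meets $h(Y)\supseteq L(Y')\supseteq L(X')$, monotonicity follows. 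With this repair your proof goes through cleanly and, as a bonus, yields a statement stronger than the lemma as stated.
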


\begin{proof}
Let us assume $D$ contains a bicycle of length $\ell$ as a butterfly minor, then there exists a closed chain of cycles $\Brace{C_1,\dots,C_{\ell}}$ in $D$.
Similarly if $D$ contains $A_4$ as a butterfly minor we can find a closed chain of $3$ cycles in $D$ as illustrated in \cref{fig:A4chain}.

\begin{figure}[h!]
	\begin{center}
		\begin{tikzpicture}[scale=0.7]
		
		\pgfdeclarelayer{background}
		\pgfdeclarelayer{foreground}
		
		\pgfsetlayers{background,main,foreground}
		
		\tikzstyle{v:main} = [draw, circle, scale=0.5, thick,fill=black]
		\tikzstyle{v:tree} = [draw, circle, scale=0.3, thick,fill=black]
		\tikzstyle{v:border} = [draw, circle, scale=0.75, thick,minimum size=10.5mm]
		\tikzstyle{v:mainfull} = [draw, circle, scale=1, thick,fill]
		\tikzstyle{v:ghost} = [inner sep=0pt,scale=1]
		\tikzstyle{v:marked} = [circle, scale=1.2, fill=CornflowerBlue,opacity=0.3]
		
		\tikzset{>=latex} 
		\tikzstyle{e:marker} = [line width=9pt,line cap=round,opacity=0.2,color=DarkGoldenrod]
		\tikzstyle{e:colored} = [line width=1.2pt,color=BostonUniversityRed,cap=round,opacity=0.8]
		\tikzstyle{e:coloredthin} = [line width=1.1pt,opacity=0.8]
		\tikzstyle{e:coloredborder} = [line width=2pt]
		\tikzstyle{e:main} = [line width=1pt]
		\tikzstyle{e:extra} = [line width=1.3pt,color=LavenderGray]
		
		\begin{pgfonlayer}{main}
		
		\node (C) [] {};
		
		\node (C1) [v:ghost, position=180:25mm from C] {};
		
		\node (C2) [v:ghost, position=0:0mm from C] {};
		
		\node (C3) [v:ghost, position=0:25mm from C] {};

		

		
		
		\node (v1) [v:main,position=90:18mm from C2] {};
		\node (v2) [v:main,position=180:12mm from C2] {};
		\node (v3) [v:main,position=270:18mm from C2] {};
		\node (v4) [v:main,position=0:12mm from C2] {};
		
		\node (g1) [v:ghost,position=180:4.5mm from v2] {};
		\node (g2) [v:ghost,position=0:4.5mm from v4] {};
		
		\node (g3) [v:ghost,position=180:1mm from v1] {};
		\node (g4) [v:ghost,position=0:1mm from v3] {};
		
		
		
		

		

		
		
		\draw (v2) [e:main,->,bend right=15,line width=2pt,color=CornflowerBlue] to (v4);
		\draw (v4) [e:main,->,bend right=15,densely dashed,line width=2,color=DarkGoldenrod] to (v2);
		
		\draw (v1) edge[line width=1pt,quick curve through={(g2)},->,densely dotted,line width=2pt,color=DarkMagenta] (g4);
		\draw (v3) edge[line width=1pt,quick curve through={(g1)},->,densely dotted,line width=2pt,color=DarkMagenta] (g3);
		
		\draw (v1) [e:main,->,line width=2pt,color=CornflowerBlue] to (v2);
		\draw (v2) [e:main,->,densely dashed,line width=2,color=DarkGoldenrod] to (v3);
		\draw (v3) [e:main,->,densely dashed,line width=2,color=DarkGoldenrod] to (v4);
		\draw (v4) [e:main,->,line width=2pt,color=CornflowerBlue] to (v1);
		
		

		
		
		\end{pgfonlayer}
		

		\begin{pgfonlayer}{background}
		
		\end{pgfonlayer}	
		
		\begin{pgfonlayer}{foreground}

		\end{pgfonlayer}
		\end{tikzpicture}
	\end{center}
	\caption{A closed chain of $3$ directed cycles in $A_4$.}
	\label{fig:A4chain}
\end{figure}
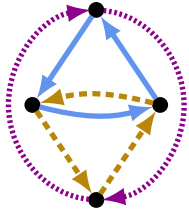

In what follows we show that, if $D$ contains a closed chain of $\ell\geq 3$ cycles, it has a haven of order $3$.
So let $\Brace{C_1,\dots, C_{\ell}}$ be a closed chain of cycles and $\ell\geq 3$.
For every $i\in\Set{1,\dots,\ell}$ let us fix a vertex $v_i\in\Fkt{V}{C_{i-1}|}\cap\Fkt{V}{C_i}$, where $C_{-1}=C_{\ell}$.
We say that a set $S\subseteq\Fkt{V}{D}$ \emph{covers} a directed cycle $C_i$ if $S\cap\Fkt{V}{C_i}\neq\emptyset$.
Next we will define the function $h$ that will be our haven.

Let $S\subseteq\Fkt{V}{D}$ with $\Abs{S}=1$.
In this case $S$ covers at most two of the directed cycles in our chain and these must be consecutive.
Hence without loss of generality we may assume $S$ covers $C_1$ and $C_{\ell}$.
In this case $\Brace{C_2,\dots,C_{\ell-1}}$ is a chain of cycles in $D-S$ and this it must be contained in a single strong component $K$ of $D-S$.
We set $\Fkt{h}{S}\coloneqq K$.
Similarly, if $S$ covers only one directed cycle of our closed chain there still is a unique strong component $K$ containing all other directed cycles and if $S$ covers no $C_i$ at all, we choose $K$ as the unique component containing the whole closed chain.
Also in these cases we set $\Fkt{h}{S}\coloneqq K$.

From now on let us assume $S\subseteq\Fkt{V}{D}$ contains exactly two vertices.
Now $S$ can cover up to four directed cycles from the closed chain.
If it covers non, one, or two consecutive ones we define $K$ in the same way as we did in the case of $\Abs{S}=1$ and set $\Fkt{h}{S}=K$.
For the remaining cases we do a case distinction.
In each of the cases we choose some $h\in\Set{1,\dots,\ell}$ and take $K$ to be the strong component of $D-S$ that contains the vertex $v_h$.
Every $S\subseteq\Fkt{V}{D}$ of size at most two can cover at most four directed cycles of our chain.
Moreover, with $\ell\geq 3$ there is always some $v_h\in\Set{v_1,\dots,v_{\ell}}\setminus S$ and we may choose $\Fkt{h}{S}$ to be the strong component of $D-S$ containing $v_h$.

What is left is to show that $h$ indeed is a haven.
Let $S=\Set{x,y}\subseteq\Fkt{V}{D}$, it suffices to show that $\Fkt{h}{\Set{x,y}}\subseteq\Fkt{h}{\Set{x}}$.
Since $x$ can cover at most two of the directed cycles of our chain, and these must be consecutive, all non-covered $C_i$ are completely contained in $\Fkt{h}{\Set{x}}$.
Moreover, since $\Fkt{h}{\Set{x,y}}$ is defined as the strong component of $D-\Set{x,y}$ containing $v_h$, $\Fkt{h}{\Set{x,y}}$ is contained in the strong component of $D-x$ containing $v_h$, which is exactly $\Fkt{h}{\Set{x}}$.
\end{proof}

With \cref{lemma:smallhavens} we know that a digraph of directed treewidth one cannot contain a bicycle or $A_4$ as a butterfly minor.
Invoking \cref{thm:strongminors} we obtain the following corollary.

\begin{corollary}\label{cor:allowedminors}
If $D$ is a digraph with $\dtw{D}=1$, then every strongly $2$-connected butterfly minor of $D$ is isomorphic to $\Bidirected{K_2}$.
\end{corollary}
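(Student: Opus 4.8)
The plan is to argue by contradiction, and the key methodological point is that the contradiction must be extracted from $D$ itself rather than from the minor $D'$: directed treewidth is not butterfly-minor-monotone (as recalled in the introduction), so one cannot simply assert that $D'$ has small directed treewidth. Concretely, I would assume $\dtw{D}=1$ and that $D$ admits a strongly $2$-connected butterfly minor $D'$ with $D'\not\cong\Bidirected{K_2}$. First I would dispose of the degenerate small cases: the only strongly $2$-connected digraphs on at most two vertices are the trivial digraph and $\Bidirected{K_2}$, so we may assume $\Abs{\Fkt{V}{D'}}\geq 3$.

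Next I would apply \cref{thm:strongminors} to $D'$: being strongly $2$-connected on at least three vertices, $D'$ contains a bicycle or $A_4$ as a butterfly minor. Since the butterfly-minor relation is transitive — which follows from the definition by pushing the vertex- and edge-deletions to the front of the combined sequence of operations — it follows that $D$ itself contains a bicycle or $A_4$ as a butterfly minor. Then \cref{lemma:smallhavens} forces $D$ to contain a haven of order $3$, and \cref{lemma:dcopsandkhavens} (equivalently, the remark following it, that a haven of order $k+1$ witnesses $\dtw{D}\geq k$) gives $\dtw{D}\geq 2$, contradicting $\dtw{D}=1$. Hence no such $D'$ exists, and every strongly $2$-connected butterfly minor of $D$ is isomorphic to $\Bidirected{K_2}$.

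I do not expect a genuine obstacle here: the statement is a short chaining of \cref{thm:strongminors}, \cref{lemma:smallhavens} and \cref{lemma:dcopsandkhavens}. The one place where care is needed — and the only place where a naive argument breaks — is the transfer across the minor relation: it is the forbidden-minor certificate (a bicycle or $A_4$) and the haven of order $3$ that travel from $D'$ up to $D$ via transitivity of butterfly minors, whereas the directed-treewidth bound itself does not travel from $D$ down to $D'$. A secondary, purely bookkeeping point is the base case $\Abs{\Fkt{V}{D'}}\leq 2$, which is settled by inspection.
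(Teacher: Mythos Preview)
Your proposal is correct and follows exactly the argument the paper intends: the paper states the corollary immediately after noting that \cref{lemma:smallhavens} forbids bicycles and $A_4$ as butterfly minors when $\dtw{D}=1$, and then invokes \cref{thm:strongminors}. Your explicit mention of transitivity of butterfly minors and your caution that the directed-treewidth bound does not descend to $D'$ are the right points of care, and they match the paper's implicit reasoning.
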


A way to exploit this is the fact that every dibrace of $D$ is strongly $2$-connected and thus, if $D$ has directed treewidth one, every dibrace of $D$ has exactly $2$ vertices.
Hence, if $\Brace{T,\sigma,\zeta,\mathcal{S}}$ is an $\mathcal{S}$-decomposition for $D$, then $\Abs{B_t}=2$ for all $t\in\Fkt{V}{T}$.

\begin{lemma}\label{lemma:nogreatcycletodtw1}
Let $D$ be a strongly connected digraph on at least two vertices that does not contain a bicycle or $A_4$ as a butterfly minor, then $\dtw{D}=1$.
\end{lemma}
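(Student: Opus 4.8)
The plan is to induct on $\Abs{\Fkt{V}{D}}$. Since $D$ is strongly connected on at least two vertices it contains a directed cycle, hence is not acyclic, so $\dtw{D}\ge 1$ and it suffices to construct a directed tree decomposition of $D$ of width $1$. If $\Abs{\Fkt{V}{D}}=2$ then $D=\Bidirected{K_2}$, which has directed treewidth $1$, so assume $\Abs{\Fkt{V}{D}}\ge 3$.

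The first step is to extract a vertex of in- or out-degree $1$. By \cref{thm:strongminors} every strongly $2$-connected digraph on at least three vertices has a bicycle or $A_4$ as a butterfly minor, and by transitivity of the butterfly minor relation the same holds for every strongly $2$-connected butterfly minor of $D$ on at least three vertices; by hypothesis there is none, so $D$ is not strongly $2$-connected and therefore has a non-trivial tight separation. Pick one, $\Separation{A}{B}{}{}$, whose smaller shore $B$ is as small as possible, and let $v$ be its cut vertex. Let $D'$ be the tight separation contraction of $A$ in $D$; then $\Fkt{V}{D'}=B$, $D'$ is strongly connected (contractions preserve this), and $D'$ is a butterfly minor of $D$, hence has no bicycle and no $A_4$ as a butterfly minor. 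If $D'$ were not strongly $2$-connected it would have a non-trivial tight separation, which a short uncrossing argument combines with $\Separation{A}{B}{}{}$ into a non-trivial tight separation of $D$ with smaller shore a proper subset of $B$ — contradicting minimality. Hence $D'$ is strongly $2$-connected, so by \cref{thm:strongminors} it has at most two vertices; thus $D'=\Bidirected{K_2}$ and $B=\Set{v,w}$. Since $\Separation{A}{B}{}{}$ is a directed separation there is no edge from $B\setminus A=\Set{w}$ into $A\setminus B=\Fkt{V}{D}\setminus\Set{v,w}$, so strong connectivity of $D$ forces $\OutN{D}{w}=\Set{v}$. (Reversing all edges of $D$ preserves every hypothesis and exchanges this with the mirror case $\InN{D}{w}=\Set{v}$, so it is enough to treat the case above.)

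Now contract, recurse, and re-expand. The edge $\Brace{w,v}$ is the unique out-edge of $w$, hence butterfly contractible; let $\bar D$ be obtained by butterfly contracting it, merging $w$ into a vertex we again call $v$. Then $\bar D$ is strongly connected, has one vertex fewer, and is a butterfly minor of $D$, so it still excludes bicycles and $A_4$; by the induction hypothesis it has a directed tree decomposition $\Brace{\bar T,\bar\beta,\bar\gamma}$ of width $1$. Let $p$ be the unique node with $v\in\Fkt{\bar\beta}{p}$. Form $\Brace{T,\beta,\gamma}$ by adding to $\bar T$ a fresh leaf $q$ as a child of $p$, setting $\Fkt{\beta}{q}\coloneqq\Set{w}$, leaving all other bags unchanged (so $v$ stays in $\Fkt{\beta}{p}$), keeping all old guards, and setting $\Fkt{\gamma}{\Brace{p,q}}\coloneqq\Set{v}$. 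Then $T$ is still an arborescence and the bags partition $\Fkt{V}{D}$. The new guard $\Set{v}$ hits every directed walk of $D$ that starts and ends in $\Fkt{\beta}{T_q}=\Set{w}$ and leaves it, since such a walk exits $w$ along $\Brace{w,v}$. The old guards remain valid because every directed walk of $D$ that visits $w$ also visits $v$: a walk of $D$ violating some guard condition projects, upon contracting $w$ onto $v$, to a walk of $\bar D$ violating the same condition, and any vertex the old guard uses to hit the projected walk is also a vertex of the original one. Finally $\Abs{\Fkt{\Gamma}{q}}=\Abs{\Set{v,w}}=2$, and for every other node $t$ one has $\Fkt{\Gamma}{t}=\Fkt{\bar\Gamma}{t}$ — in particular at $p$ adding the guard $\Set{v}$ on $\Brace{p,q}$ changes nothing, as $v\in\Fkt{\beta}{p}$ already — so $\Width{\Brace{T,\beta,\gamma}}=1$ and $\dtw{D}\le 1$.

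The main obstacle is this final re-expansion: verifying that un-contracting $w$ preserves all three conditions of a directed tree decomposition while every $\Fkt{\Gamma}{\cdot}$-set stays of size at most $2$. The key is that $w$ is reachable only through $v$; this is precisely why the new leaf is hung below the node carrying $v$ and why the new edge is guarded by $\Set{v}$ rather than $\Set{w}$, and it is what simultaneously makes the new guard valid and prevents the guard set at $p$ from growing. The rest is routine once \cref{thm:strongminors} has delivered the cheap vertex.
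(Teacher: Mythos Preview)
Your inductive approach --- find a vertex $w$ of out-degree one via a minimal tight separation, butterfly-contract $(w,v)$, recurse, then hang a fresh leaf for $w$ below the node $p$ carrying $v$ --- is natural, but the re-expansion step has a genuine gap. You assert that the old guards remain valid because ``every directed walk of $D$ that visits $w$ also visits $v$''. This is false for walks \emph{ending} at $w$: such a walk never takes the edge $(w,v)$ and so need not reach $v$ at all. Since $w\in\Fkt{\beta}{T_t}$ whenever $p\in\Fkt{V}{T_t}$, walks ending at $w$ are exactly the ones you must worry about for edges $(d,t)$ above $p$.

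Here is a concrete failure. Let $D$ be the directed $4$-cycle $a\to c\to w\to v\to a$; then $\OutN{D}{w}=\{v\}$ and $B=\{v,w\}$ is a minimal shore, and $\bar D$ is the directed $3$-cycle $a\to c\to v\to a$. One legitimate width-$1$ decomposition of $\bar D$ has two nodes $d,t$ with $\bar\beta(d)=\{c\}$, $\bar\beta(t)=\{a,v\}$ and $\bar\gamma((d,t))=\{v\}$: every $\bar D$-walk from $\{a,v\}$ through $c$ and back must re-enter via $(c,v)$, so this guard is valid. Here $p=t$. Your construction adds a leaf $q$ below $t$ with $\beta(q)=\{w\}$, $\gamma((t,q))=\{v\}$, and keeps $\gamma((d,t))=\{v\}$. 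But now the $D$-walk $(a,c,w)$ starts and ends in $\Fkt{\beta}{T_t}=\{a,v,w\}$, leaves it at $c$, and never meets $v$; the guard $\{v\}$ on $(d,t)$ no longer hits it, so the decomposition is invalid. Replacing $v$ by $w$ in that guard, or adding $w$ to it, pushes $\Abs{\Fkt{\Gamma}{\cdot}}$ to $3$, so there is no local patch; what is really needed is control over \emph{which} width-$1$ decomposition of $\bar D$ you recurse into. The paper sidesteps this by building the whole decomposition in one shot from a maximal laminar family of tight separations (the $\mathcal{S}$-decomposition), so that every guard is the cut vertex of a tight separation rather than an arbitrary vertex returned by a black-box recursion.
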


\begin{proof}
If $D$ has exactly two vertices there is nothing to show, so we may assume $\Abs{\Fkt{V}{D}}\geq 3$.
Let $\mathcal{S}$ be a maximal laminar family of non-trivial tight separations and $\Brace{T,\sigma,\zeta,\mathcal{S}}$ an $\mathcal{S}$-decomposition for $D$.
By \cref{cor:allowedminors} every dibrace of $D$ must be isomorphic to $\Bidirected{K_2}$ and thus $\Abs{B_t}\leq 2$ for all $t\in\Fkt{V}{T}$.
Hence $D$ must have at least one non-trivial tight separation.
Let us choose an arbitrary leaf $r\in\Fkt{V}{T}$ as a root and let $t$ be its unique neighbour in $T$.

We define a directed tree decomposition for $D$ on the tree $T$.
First let $\Set{B_r}=\Set{x,y}$, where $y$ is the cut vertex of the tight separation $\Fkt{\sigma}{rt}$, we set $\Fkt{\beta}{r}=\Set{x,y}$.
From now on we regard $T$ as a directed graph where all edges are oriented away from $r$.
Let $v\in\Fkt{V}{D}\setminus{x,y}$ be any vertex, then let $t$ be the unique vertex closest to $r$ with $v\in B_t$.
To see the uniqueness of $t$ suppose there are $t_1$ and $t_2$ both at the same distance from $r$ with $v\in B_{t_i}$.
Then there is a vertex $t'$ in $T$ that is the smallest common predecessor of the $t_i$ and there is a unique edge $e$ incident with $t'$ on the path from $t'$ to $t_1$, but no $t'$-$t_2$-path in $T$ contains $e$.
By choice of $t'$ we know $v\notin B_{t'}$.
But then $v$ must be contained in both shores of the directed separation $\Fkt{\sigma}{e}$, implying $v$ to be in the separator and thus $v\in B_{t'}$.
Now let $\Brace{t,t'}\in\Fkt{E}{T}$ and let $s$ be the unique vertex in the separator of $\Fkt{\sigma}{tt'}$.
We set $\Fkt{\gamma}{\Brace{t,t'}}\coloneqq\Set{s}$.
Then every closed directed walk starting in $\Fkt{\beta}{T_t'}$, leaving the set and returning must contain $s$ and thus $s$ is a proper guard.

Suppose there is some $t\in\Fkt{V}{T}$ such that $\Fkt{\beta}{t}=\emptyset$ and let $\Brace{t',t}\in\Fkt{E}{T}$ as well as $e$ be any outgoing edge of $t$.
Then the separators of $\Fkt{\sigma}{t't}$ and $\Fkt{\sigma}{e}$ must coincide since otherwise the separator of $\Fkt{\sigma}{e}$ would contain a vertex $s$ for which $t$ would be the first vertex, seen from $r$, with $s\in B_t$.
Every dibrace of $D$ contains at least two vertices and thus there must exist a vertex $u\in B_t$ that is not the cut vertex of $\Fkt{\sigma}{t't}$.
But in this case $u$ cannot be contained in $B_{t''}$ for any $t''\in\Fkt{V}{T}\setminus\Set{t}$ and thus $u\in\Fkt{\beta}{t}$.
Hence $\CondSet{\Fkt{\beta}{t}}{t\in\Fkt{V}{T}}$ is a partition of $\Fkt{V}{D}$ and thus $\Brace{T,\beta,\gamma}$ is a directed tree decomposition.
Moreover, $B_t=\Fkt{\Gamma}{t}$ for all $t\in\Fkt{V}{T}$ and thus $\Width{\Brace{T,\beta,\gamma}}=1$.
\end{proof}

\begin{lemma}\label{lemma:dtw1tohw1}
If $D$ is a strongly connected digraph with $\dtw{D}=1$, then $\CycleHypergraph{D}$ is a hypertree.
\end{lemma}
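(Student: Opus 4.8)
The natural strategy is an induction on $\Abs{\Fkt{V}{D}}$ that cuts $D$ along a single vertex, using that $\CycleHypergraph{D}$ is a hypertree precisely when there is a tree $T'$ with $\Fkt{V}{T'}=\Fkt{V}{D}$ such that $\Fkt{V}{C}$ induces a subtree of $T'$ for every directed cycle $C$ of $D$. If $\Abs{\Fkt{V}{D}}=2$ then $D=\Bidirected{K_2}$, $\CycleHypergraph{D}$ has the single hyperedge $\Fkt{V}{D}$, and the single edge of $D$ is the required $T'$. So assume $\Abs{\Fkt{V}{D}}\geq 3$. Since $\dtw{D}=1$, $D$ contains neither a bicycle nor $A_4$ as a butterfly minor, as otherwise \cref{lemma:smallhavens} would yield a haven of order $3$ and hence $\dtw{D}\geq 2$ by \cref{lemma:dcopsandkhavens}; consequently $D$ is not strongly $2$-connected by \cref{thm:strongminors}, so it has a non-trivial tight separation $\Separation{A}{B}{}{}$ with cut vertex $v$.

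Let $D_A$, respectively $D_B$, be the tight separation contraction of $D$ obtained by contracting $B$, respectively $A$, into $v$. Both are strongly connected (a quotient of a strongly connected digraph is strongly connected) and are butterfly minors of $D$; in particular they contain neither a bicycle nor $A_4$ as a butterfly minor, so $\dtw{D_A}=\dtw{D_B}=1$ by \cref{lemma:nogreatcycletodtw1}. Since $\Separation{A}{B}{}{}$ is non-trivial, $2\leq\Abs{\Fkt{V}{D_A}}<\Abs{\Fkt{V}{D}}$, and likewise for $D_B$, so the induction hypothesis applies and yields trees $T_A$ on $\Fkt{V}{D_A}=A$ and $T_B$ on $\Fkt{V}{D_B}=B$ witnessing that $\CycleHypergraph{D_A}$ and $\CycleHypergraph{D_B}$ are hypertrees. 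As $A\cap B=\Set{v}$, the graph $T'\coloneqq T_A\cup T_B$ is connected and has $\Brace{\Abs{A}-1}+\Brace{\Abs{B}-1}=\Abs{\Fkt{V}{D}}-1$ edges, hence is a tree on $\Fkt{V}{D}$.

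Finally I would verify that $\Fkt{V}{C}$ is connected in $T'$ for every directed cycle $C$ of $D$. Because no edge of $D$ has its tail in $B\setminus A$ and its head in $A\setminus B$, no directed walk can reach $A\setminus B$ from $B\setminus A$ without passing through $v$. Hence, if $v\notin\Fkt{V}{C}$ then $\Fkt{V}{C}\subseteq A$ or $\Fkt{V}{C}\subseteq B$, so $C$ is a directed cycle of $D_A$ or of $D_B$ and $\Fkt{V}{C}$ induces a subtree of $T_A$ or of $T_B$, hence of $T'$; the same conclusion holds whenever $\Fkt{V}{C}\subseteq A$ or $\Fkt{V}{C}\subseteq B$ even if $v\in\Fkt{V}{C}$. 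In the remaining case $v\in\Fkt{V}{C}$ and $\Fkt{V}{C}$ meets both $A\setminus B$ and $B\setminus A$; traversing $C$ from $v$ and using once more that a step from $B\setminus A$ to $A\setminus B$ is forbidden, $C$ runs $v,a_1,\dots,a_p,b_1,\dots,b_q,v$ with $a_i\in A\setminus B$, $b_j\in B\setminus A$ and $p,q\geq 1$. Contracting $B$ into $v$ collapses $b_1,\dots,b_q$ to $v$ and turns $C$ into the directed cycle $C^A=\Brace{v,a_1,\dots,a_p,v}$ of $D_A$ with $\Fkt{V}{C^A}=\Fkt{V}{C}\cap A$, and symmetrically we obtain a directed cycle $C^B$ of $D_B$ with $\Fkt{V}{C^B}=\Fkt{V}{C}\cap B$. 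Thus $\Fkt{V}{C}\cap A$ induces a subtree of $T_A$ and $\Fkt{V}{C}\cap B$ a subtree of $T_B$, both containing $v$, so their union, which is the subgraph of $T'$ induced by $\Fkt{V}{C}$, is connected.

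The one delicate point is this last case, where one must check both that $C$ really has the claimed ``run through $A\setminus B$, then run through $B\setminus A$'' shape and that the images $C^A$ and $C^B$ are genuine (simple) directed cycles rather than mere closed walks; both are immediate from the defining no-edge condition of a tight separation. The only other thing needing care is the preservation of directed treewidth one when passing to $D_A$ and $D_B$: here one uses that, by \cref{lemma:smallhavens} and \cref{lemma:nogreatcycletodtw1}, a strongly connected digraph has directed treewidth one if and only if it excludes both a bicycle and $A_4$ as butterfly minors, and the latter property is inherited by butterfly minors.
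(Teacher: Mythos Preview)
Your proof is correct and follows the same overall plan as the paper: induct on $\Abs{\Fkt{V}{D}}$ and split along a non-trivial tight separation, recursing on the contracted pieces and then gluing the witnessing trees at the cut vertex. The main difference is in how the tight separation is chosen. The paper invokes \cref{cor:allowedminors} to know that every dibrace of $D$ is $\Bidirected{K_2}$, picks a two-vertex shore $X=\Set{x,y}$, contracts only that side, applies the induction hypothesis once, and then appends the single leaf $x$ to the resulting tree. You instead take an arbitrary non-trivial tight separation $\Separation{A}{B}{}{}$, contract each side in turn to obtain $D_A$ and $D_B$, apply the induction hypothesis to both, and glue $T_A$ and $T_B$ at $v$. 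Your route is a little more symmetric and avoids the detour through the dibrace structure; the paper's route keeps the recursion one-sided and makes the tree modification trivially small. Your verification that a cycle meeting both sides decomposes as $v,a_1,\dots,a_p,b_1,\dots,b_q,v$ and projects to genuine simple cycles $C^A$ and $C^B$ is exactly the point where the argument needs care, and you handle it correctly using the no-edge condition of the directed separation.
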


\begin{proof}	
We prove the claim by induction over $\Abs{\Fkt{V}{D}}$.
If $\Abs{\Fkt{V}{D}}=2$ $D$ must be isomorphic to $\Bidirected{K_2}$ and thus $\CycleHypergraph{D}$ is a hypertree.	

Now let $\Abs{\Fkt{V}{D}}\geq 3$.
By \cref{lemma:nogreatcycletodtw1} there exists a directed tree decomposition $\Brace{T,\gamma,\beta}$ of width $1$ for $D$.
Let $X=\Set{x,y}\subseteq\Fkt{V}{D}$ be the shore of a non-trivial tight separation with cut vertex $y$ in $D$ and let $D'$ be the digraph obtained from $D$ by contracting $X$ into the vertex $v_X$.
Such an $X$ must exist since all dibraces of $D$ are isomorphic to $\Bidirected{K_2}$ by \cref{cor:allowedminors}.
Clearly $D'$ is strongly connected.
To see $\dtw{D'}=1$ let $\Fkt{\Gamma}{r}=\Set{x,y}$ and replace every occurrence of $x$ or $y$ in a bag or guard of $\Brace{T,\beta,\gamma}$ by $v_X$ and then delete the vertex $r$.
The result is again a directed tree decomposition for $D'$ of width $1$.

By induction $\CycleHypergraph{D'}$ is a hypertree and thus there exists a tree $T'$ together with a family $\Brace{T'_{C'}}_{C'\in E(\CycleHypergraph{D'})}$ of subtrees of $T'$ such that $\Fkt{V}{T'_{C'}}=\Fkt{V}{C'}$ for all $C'\in\Fkt{E}{D'}$.
Moreover, $v_X\in\Fkt{V}{T'}$.
Now for every directed cycle $C$ of $D$ that contains the vertex $x$ there is a directed cycle $C'$ in $D'$ that is a result of the contraction.
If there is a directed cycle $C''$ containing $x$ in $D$ and a directed cycle $C'''$ such that $C''-x-y=C'''-y$, in $D'$ these two cycles are indistinguishable and we introduce a copy of $T_{C'''}$ to our family of subtrees.
Now for every $C$ in $D$ containing $x$ take a corresponding directed cycle $C'$ in $D'$, replace the vertex $v_X$ by $y$ and add $x$ together with the edge $xy$.
Do the same modification for $T'$.
The result is a tree $T''$ together with a family of subtrees $\Brace{T''_C}_{C\in E(\CycleHypergraph{D})}$ where $\Fkt{V}{T''_C}=\Fkt{V}{C}$ for all directed cycles $C$ in $D$.
Hence $\CycleHypergraph{D}$ is a hypertree.
\end{proof}

\begin{lemma}\label{lemma:hypertrees}
If $D$ is a digraph such that $\CycleHypergraph{D}$ is a hypertree, $\dtw{D}=1$.
\end{lemma}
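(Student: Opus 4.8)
The plan is to reduce to the case that $D$ is strongly connected and then to show that every dibrace of $D$ is isomorphic to $\Bidirected{K_2}$, after which the bag--guard construction in the proof of \cref{lemma:nogreatcycletodtw1} — which, once one knows that all dibraces are $\Bidirected{K_2}$, makes no further use of butterfly minors — yields a directed tree decomposition of width $1$. For the reduction I would argue as follows. If $K$ is a strong component of $D$, then the directed cycles of $K$ are exactly the directed cycles of $D$ contained in $\Fkt{V}{K}$, and since any two vertices of $K$ lie on a common chain of cycles (\cref{lemma:strongconnect}) the restriction of a host tree $T$ of $\CycleHypergraph{D}$ to $\Fkt{V}{K}$ is connected and witnesses that $\CycleHypergraph{K}$ is a hypertree. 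Conversely, width-$\le 1$ directed tree decompositions of the strong components are assembled under one new root node with empty bag and empty incident guards; this is legitimate because a directed walk whose endpoints lie in a single strong component never leaves that component, so the inter-component guards have nothing to guard. Together with the trivial bound $\dtw{D}\ge 1$ (as $\CycleHypergraph{D}$ has an edge, $D$ is not a DAG) this gives $\dtw{D}=1$ as soon as $\dtw{K}=1$ for every strong component $K$ with $\Abs{\Fkt{V}{K}}\ge 2$.

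So assume $D$ is strongly connected with $\CycleHypergraph{D}$ a hypertree, say with host tree $T$; we may assume $\Abs{\Fkt{V}{D}}\ge 3$, the two-vertex case being $\Bidirected{K_2}$. The first ingredient is that \emph{a strongly $2$-connected digraph on at least three vertices cannot have a hypertree cycle hypergraph}: if $\ell$ is a leaf of $T$ with neighbour $p$, then $D-p$ is strongly connected on at least two vertices, so $\ell$ lies on a directed cycle $C$ of $D-p$ (\cref{lemma:strongconnect}); but $\Fkt{V}{C}$ is a hyperedge of $\CycleHypergraph{D}$ of size at least two, so the subtree it induces in $T$ must contain the unique neighbour $p$ of the leaf $\ell$, which is absurd since $p\notin\Fkt{V}{C}$. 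Hence $D$ is not strongly $2$-connected and therefore has a non-trivial tight separation $\Separation{A}{B}{}{}$, with cut vertex $w$.

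The second and main ingredient is that \emph{tight separation contraction preserves having a hypertree cycle hypergraph}. Let $D'$ be obtained from $D$ by contracting $B$ onto $w$; then $D'$ is again strongly connected, and analysing how cycles transform — a cycle of $D'$ missing $w$ is a cycle of $D$ inside $A$, and a cycle of $D'$ through $w$ lifts to a cycle of $D$ by re-routing the contracted vertex through a path inside $B$ (using strong connectivity of $D$ and that edges leaving $B\setminus A$ stay in $B$ or hit $w$) — one obtains $\Fkt{E}{\CycleHypergraph{D'}}=\CondSet{\Fkt{V}{\tilde C}\cap A}{\tilde C\text{ a directed cycle of }D,\ \Abs{\Fkt{V}{\tilde C}\cap A}\ge 2}$. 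By \cref{thm:hypertrees} it then suffices to verify the Helly property and chordality of the line graph for $\CycleHypergraph{D'}$. For Helly, pairwise intersecting $A$-restrictions come from pairwise intersecting cycles of $D$, which by Helly for $\CycleHypergraph{D}$ share a vertex $z$; if $z\in A$ we are done, and if $z\in B\setminus A$ then each of these cycles, having a vertex in $B\setminus A$ and at least two in $A$, is forced through $w$, so $w$ lies in all the $A$-restrictions. For chordality, an induced cycle of length $k\ge 4$ in $\LineGraph{\CycleHypergraph{D'}}$ would lift to an induced cycle of the same length in $\LineGraph{\CycleHypergraph{D}}$: at most two of the involved cycles of $D$ can pass through $w$ (two such would have intersecting $A$-restrictions, hence be consecutive), and any one of them avoiding $w$ lies entirely in $A\setminus\{w\}$, hence equals its $A$-restriction and contributes no new intersections — contradicting chordality of $\LineGraph{\CycleHypergraph{D}}$.

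These facts combine cleanly: iterating tight separation contractions starting from $D$ produces only strongly connected digraphs whose cycle hypergraph is a hypertree, so each dibrace of $D$, being strongly $2$-connected, has at most two vertices and is therefore $\Bidirected{K_2}$; one now repeats the construction in the proof of \cref{lemma:nogreatcycletodtw1} verbatim, replacing its single appeal to \cref{cor:allowedminors} by the statement just proved, and the reduction of the first paragraph then delivers $\dtw{D}=1$ in general. The step I expect to be the main obstacle is the preservation statement of the third paragraph: writing down the cycles of the contracted digraph, and in particular controlling exactly which of them are forced to run through the cut vertex $w$, is where the argument must genuinely exploit the tight-separation hypothesis rather than mere hypertree structure, and where essentially all the work lies.
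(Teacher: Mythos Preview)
Your argument is correct, but it is far more elaborate than what the paper does. The paper's proof is essentially one line: take the host tree $T$ that witnesses that $\CycleHypergraph{D}$ is a hypertree, root it at any leaf, set $\Fkt{\beta}{t}\coloneqq\{t\}$ and $\Fkt{\gamma}{(d,t)}\coloneqq\{d\}$ (with $d$ the parent of $t$), and observe that any directed cycle meeting both $\Fkt{\beta}{T_t}=\Fkt{V}{T_t}$ and its complement has a vertex set which, being a subtree of $T$, must contain the edge $dt$ and hence the vertex $d$. This gives $\Abs{\Fkt{\Gamma}{t}}\le 2$ for every $t$ and width~$1$ immediately.

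By contrast, you never touch the host tree as a decomposition tree. Instead you (i) reduce to the strongly connected case, (ii) prove that strongly $2$-connected digraphs on at least three vertices cannot have hypertree cycle hypergraphs, (iii) prove the non-trivial preservation statement that tight separation contraction keeps $\CycleHypergraph{D}$ a hypertree (via Helly and chordality of the line graph), and (iv) conclude that every dibrace is $\Bidirected{K_2}$ so that the $\mathcal{S}$-decomposition machinery of \cref{lemma:nogreatcycletodtw1} applies. All four steps check out --- in particular your chordality lift works because any two lifted cycles meeting only in $B\setminus\{w\}$ are both forced through $w$ and hence already have intersecting $A$-restrictions --- but the detour through dibraces is unnecessary here. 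What your route \emph{does} buy is a structural fact of some independent interest (closure of the hypertree property under tight contraction), and it exposes why the implication $\CycleHypergraph{D}$ hypertree $\Rightarrow$ $\dtw{D}=1$ is morally the same as the implication ``no bicycle or $A_4$ minor $\Rightarrow$ $\dtw{D}=1$'': both funnel through the smallness of dibraces. The paper's approach, on the other hand, shows that the host tree \emph{is} the directed tree decomposition, which is both shorter and conceptually cleaner for this particular lemma.
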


\begin{proof}
With $\CycleHypergraph{D}$ being a hypertree there exists a tree $T$ on the vertices of $D$ together with a family $\Brace{T_C}_{C\in E{\CycleHypergraph{D}}}$ of subtrees of $T$ with $\Fkt{V}{T_C}=\Fkt{V}{C}$ for all directed cycles $C$.
Choose an arbitrary leaf $r\in\Fkt{V}{T}$ as a root.
Then introduce for every vertex $t\in\Fkt{V}{T}$ a bag with $\Fkt{\beta}{t}=\Set{t}$ and for every edge $\Brace{d,t}$ of $T$, where the orientation of $dt$ is chosen to point away from $r$, define $\Fkt{\gamma}{\Brace{dt}}=\Set{d}$.
If we define $\Fkt{\Gamma}{t}$ as in the definition of directed tree decompositions it is clear that $\Abs{\Fkt{\Gamma}{t}}\leq 2$ for all $t\in\Fkt{V}{T}$.
Moreover, if $C$ is a directed cycle containing a vertex of $\Fkt{\beta}{T_t}$ and a vertex of $\Fkt{V}{D}\setminus\Fkt{\beta}{T_t}$, then $C$ must contain $t$ itself and the predecessor $d$ of $t$ as well.
Hence $\Brace{T,\beta,\gamma}$ is in fact a directed tree decomposition of width $1$.
\end{proof}

\section{Concluding Remarks}

In this paper we have answered the open question of a precise characterisation of digraph of directed treewidth one in terms of forbidden butterfly minors.
Moreover, we established a close relation between the concept of directed treewidth and hypertree-width.
In particular, \cref{thm:dtw1} shows that the strongly connected digraphs $D$ with $\dtw{D}=1$ are exactly those whose cycle hypergraphs are hypertrees.
This shows that directed treewidth generalises the notion of acyclicity in the cycle hypergraph.
Some immediate questions are raised by these results.
\begin{itemize}
	
	\item Many problems are computationally hard even on digraphs of bounded directed treewidth.
	However the structure of hypertrees and hypergraphs of bounded hypertree-width admits a framework that allows for efficient algorithms.
	Are there digraphic problems that can be solved in, say XP-time, if instead of $D$ we consider the dual cycle hypergraph of $D$ as input?
	
	\item Related to the question above: In case we wanted to exploit the fact that $\hw{\Dual{\CycleHypergraph{D}}}$ is bounded by a function in $\dtw{D}$, but still consider $D$ as the input graph we run into a problem.
	Namely the size of $\Dual{\CycleHypergraph{D}}$ could already be exponential in $\Abs{\Fkt{V}{D}}$.
	
	Let us say that a class $\mathcal{D}$ of digraphs is of \emph{bounded cyclical complexity}, if there exists a constant $c$ and a computable function $f$ such that for all $D\in\mathcal{D}$ we have $\dtw{D}\leq c$ and $\Abs{\Fkt{E}{\CycleHypergraph{D}}}\leq\Abs{\Fkt{V}{D}}^{f(\dtw{D})}$.
	Are there otherwise hard problems that become tractable on classes of bounded cyclical complexity?
	
	\item Is there a nice characterisation of hypergraphs $H$ for which a digraph $D$ exists with $\CycleHypergraph{D}=H$?
	Can we recognise such hypergraphs in polynomial time?
	
\end{itemize}

Kintali \cite{kintali2013directed} introduced the notion of directed minors, generalising butterfly minors by allowing the contraction of whole directed cycles at once, in order to tackle the problem of infinite antichains for butterfly minors.
Sadly directed minors do not tie into the deep connection between digraph structure theory and matching theory in the way butterfly minors do, which is the main reason the results in this paper use the (weaker) notion of butterfly minors.
In terms of directed minors it is possible to restate \cref{thm:dtw1} to obtain the following.
\begin{corollary}
A strongly connected digraph $D$ satisfies $\dtw{D}=1$ if and only if it does not contain $\Bidirected{C_3}$ as a directed minor. 
\end{corollary}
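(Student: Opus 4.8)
The plan is to deduce the corollary from \cref{thm:dtw1} by re-expressing its condition~(iii) in terms of directed minors. The two ingredients are: every butterfly minor is a directed minor and the directed minor relation is transitive; and both a bicycle and $A_4$ have $\Bidirected{C_3}$ as a directed minor. For the latter, a bicycle $\Bidirected{C_{\ell}}$ with $\ell\geq 3$ is reduced to $\Bidirected{C_3}$ by repeatedly contracting one of its digons (digons are directed $2$-cycles, so this is a legal directed-minor operation), and contracting one of the two digons of $A_4$ yields $\Bidirected{C_3}=\Bidirected{K_3}$. Hence, if $D$ contains a bicycle or $A_4$ as a butterfly minor, it contains $\Bidirected{C_3}$ as a directed minor; equivalently, if $D$ contains no $\Bidirected{C_3}$ as a directed minor then it contains neither a bicycle nor $A_4$ as a butterfly minor, so $\dtw{D}=1$ by \cref{lemma:nogreatcycletodtw1}. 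This proves the ``if'' part.

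For the ``only if'' part I would prove the contrapositive: if $D$ contains $\Bidirected{C_3}$ as a directed minor, then $\dtw{D}\geq 2$. Fix a model of $\Bidirected{C_3}$ in $D$, that is, pairwise disjoint branch sets $B_1,B_2,B_3\subseteq\Fkt{V}{D}$, each inducing a subdigraph that can be reduced to a single vertex by butterfly and directed-cycle contractions, together with a linking edge $e_{ij}$ of $D$ with tail in $B_i$ and head in $B_j$ for every ordered pair $i\neq j$ in $\Set{1,2,3}$. The crux is the following claim: every branch set $B$ contains a vertex $r_B$ that, inside $\InducedSubgraph{D}{B}$, is reachable from the head of every linking edge entering $B$ and reaches the tail of every linking edge leaving $B$. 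I would prove this by induction on the length of a contraction sequence reducing $\InducedSubgraph{D}{B}$ to a point, inspecting the last contraction: if it is a butterfly contraction of an edge $a\to b$, then all links leaving $B$ (resp.\@ entering $B$) are attached to exactly one of the two merged parts, and if it is a directed-cycle contraction one can route around the contracted cycle; in either case the centre of an appropriately chosen (sub-)part, furnished by the induction hypothesis, works as $r_B$.

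Granting the claim, set $r_i\coloneqq r_{B_i}$ and $W\coloneqq\Set{r_1,r_2,r_3}$, and I claim $W$ is $2$-linked in $D$. Let $s\in\Fkt{V}{D}$; since the $B_i$ are pairwise disjoint, there are two indices, say $2$ and $3$, with $s\notin B_2\cup B_3$. By the claim, $r_2$ reaches the tail of $e_{23}$ inside $B_2$, then $e_{23}$ enters $B_3$ at its head, from which $r_3$ is reached inside $B_3$; symmetrically $r_3$ reaches $r_2$. All these walks stay inside $B_2\cup B_3$ and hence avoid $s$, so $r_2$ and $r_3$ lie in one strong component $K$ of $D-s$, giving $\Abs{\Fkt{V}{K}\cap W}\geq 2>\frac{\Abs{W}}{2}$. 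Thus $W$ is $2$-linked, and \cref{thm:dklinkedvsdtw} yields $\dtw{D}\geq 2$, so $\dtw{D}\neq 1$. (Alternatively, routing three directed cycles through the $r_i$, one per pair of branch sets, produces a closed chain of three cycles with empty common intersection, and one may instead invoke the proof of \cref{lemma:smallhavens} to get a haven of order~$3$, or view this as a violation of the Helly property of $\CycleHypergraph{D}$ and combine \cref{thm:hypertrees} with \cref{thm:dtw1}.)

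The step I expect to be the main obstacle is the ``central vertex'' claim. Branch sets of a directed minor need not be strongly connected---only weakly connected---so the three parts cannot be glued along the linking edges by arbitrary paths, and the induction over the contraction sequence must carefully keep track of which side of each merge carries the incoming and the outgoing links.
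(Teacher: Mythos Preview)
The paper does not give a proof of this corollary; it is stated in the concluding remarks as a reformulation of \cref{thm:dtw1} once one passes from butterfly minors to Kintali's directed minors. Your ``if'' direction is exactly the intended reduction and is correct: every bicycle collapses to $\Bidirected{C_3}$ by repeated digon contractions, and contracting either digon of $A_4$ yields $\Bidirected{C_3}$, so excluding $\Bidirected{C_3}$ as a directed minor excludes all bicycles and $A_4$ as butterfly minors, whence $\dtw{D}=1$ by \cref{thm:dtw1}.

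For the ``only if'' direction your central-vertex claim is correct and the induction you sketch (cleaner if performed on the \emph{first} rather than the last contraction) goes through. The gap is in how you use it. You argue with a \emph{single} vertex $s$ and conclude that $W=\{r_1,r_2,r_3\}$ is $2$-linked, but the paper's definition of $k$-linked requires the majority condition for every $S$ with $\Abs{S}\leq k$; checking only $\Abs{S}=1$ yields merely $1$-linkedness, hence only $\dtw{D}\geq 1$. If $\Abs{S}=2$ with one vertex in each of $B_2,B_3$, your path between $r_2$ and $r_3$ can be destroyed. Your fallback via a closed chain $\Brace{C_{12},C_{23},C_{31}}$ has the analogous problem: $C_{12}$ and $C_{23}$ both meet $B_2$, but you have not ensured they share a \emph{vertex}, so \cref{lemma:smallhavens} (and likewise the Helly argument) does not apply as stated.

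Two clean repairs are available. First, you can build the haven directly from the $r_i$: for $\Abs{S}\leq 2$ let $i(S)$ be the least index with $B_{i(S)}\cap S=\emptyset$ and set $h(S)$ to be the strong component of $D-S$ containing $r_{i(S)}$; your routing argument (which only needs $S\cap\bigl(B_i\cup B_j\bigr)=\emptyset$) then verifies the consistency $h(S)\subseteq h(S')$ for $S'\subseteq S$. Second, and closer to how the paper frames the corollary, one can show that the property $\dtw\leq 1$ is closed under directed minors: subgraphs and butterfly contractions preserve it by \cref{thm:dtw1}, and a cycle contraction of $C$ preserves it because in the witnessing tree $T$ for the hypertree $\CycleHypergraph{D}$ one may contract the subtree $T_C$ and check that every cycle of the contracted digraph still induces a subtree. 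Since $\dtw{\Bidirected{C_3}}=2$, this immediately gives the contrapositive.
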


\bibliographystyle{alphaurl}
\bibliography{literature}

\end{document}